\def\bfB{\mathbf{B}}
\DeclareMathOperator{\modu}{\operatorname{mod}}
\DeclareMathOperator{\Mat}{\operatorname{M}}
\DeclareMathOperator{\Hom}{\operatorname{Hom}}
\DeclareMathOperator{\Ker}{\operatorname{Ker}}
\DeclareMathOperator{\Mata}{\operatorname{A}}
\DeclareMathOperator{\Mats}{\operatorname{S}}
\DeclareMathOperator{\MatT}{\operatorname{T}}
\DeclareMathOperator{\MatD}{\operatorname{D}}
\DeclareMathOperator{\End}{\operatorname{End}}
\DeclareMathOperator{\Diag}{\operatorname{Diag}}
\DeclareMathOperator{\GL}{\operatorname{GL}}
\DeclareMathOperator{\Vect}{\operatorname{span}}
\DeclareMathOperator{\tr}{\operatorname{tr}}
\DeclareMathOperator{\maxrk}{\operatorname{maxrk}}
\DeclareMathOperator{\car}{\operatorname{char}}
\DeclareMathOperator{\rk}{\operatorname{rk}}
\renewcommand{\setminus}{\smallsetminus}
\renewcommand{\epsilon}{\varepsilon}
\def\K{\mathbb{K}}
\def\F{\mathbb{F}}
\def\R{\mathbb{R}}
\def\C{\mathbb{C}}
\newcommand{\D}{\mathbb{D}}
\def\calA{\mathcal{A}}
\def\calB{\mathcal{B}}
\def\calC{\mathcal{C}}
\def\calD{\mathcal{D}}
\def\calF{\mathcal{F}}
\def\calH{\mathcal{H}}
\def\calM{\mathcal{M}}
\def\calP{\mathcal{P}}
\def\calS{\mathcal{S}}
\def\calT{\mathcal{T}}
\def\calX{\mathcal{X}}
\def\calY{\mathcal{Y}}
\def\lcro{\mathopen{[\![}}
\def\rcro{\mathclose{]\!]}}
\theoremstyle{definition}
\newtheorem{Def}{Definition}[section]
\newtheorem{Not}[Def]{Notation}
\theoremstyle{plain}
\newtheorem{theo}{Theorem}[section]
\newtheorem{prop}[theo]{Proposition}
\newtheorem{cor}[theo]{Corollary}
\newtheorem{lemma}[theo]{Lemma}
\newtheorem{claim}{Claim}[section]
\theoremstyle{plain}
\theoremstyle{remark}
\newtheorem{Rems}{Remarks}[section]
\newtheorem{Rem}[Rems]{Remark}
\title{Spaces of triangularizable matrices}
\author{Cl\'ement de Seguins Pazzis\footnote{Universit\'e de Versailles Saint-Quentin-en-Yvelines, Laboratoire de Math\'ematiques
de Versailles, 45 avenue des Etats-Unis, 78035 Versailles cedex, France}
\footnote{e-mail address: dsp.prof@gmail.com}}
\begin{document}

\thispagestyle{plain}

\maketitle
\begin{abstract}
Let $\F$ be a field. We investigate the greatest possible dimension $t_n(\F)$
for a vector space of $n$-by-$n$ matrices with entries in $\F$ and in which every element is triangularizable over the ground field $\F$.
It is obvious that $t_n(\F) \geq \frac{n(n+1)}{2}$, and we prove that equality holds if and only if $\F$ is not quadratically closed or $n=1$, excluding
finite fields with characteristic $2$.
If $\F$ is infinite and not quadratically closed, we give an explicit description of the solutions with the critical dimension $t_n(\F)$, reducing the problem to the one of deciding for which
integers $k \in \lcro 2,n\rcro$ all $k$-by-$k$ symmetric matrices over $\F$ are triangularizable.
\end{abstract}

\vskip 2mm
\noindent
\emph{AMS MSC:} 15A30, 15A03, 15A18

\vskip 2mm
\noindent
\emph{Keywords:} triangularization, spectrum, spaces of matrices, dimension, quadratically closed fields

\section{Introduction}

\subsection{The problem}

Let $\F$ be a field. We denote by $\Mat_{n,p}(\F)$ the vector space of all $n$-by-$p$ matrices with entries in $\F$, and by
$\Mat_n(\F):=\Mat_{n,n}(\F)$ the algebra of all $n$-by-$n$ square matrices with entries in $\F$.
We denote by $\Mats_n(\F)$ its subspace of symmetric matrices, by $\Mata_n(\F)$ its subspace of alternating matrices
(i.e.\ skewsymmetric matrices with all diagonal entries zero), by $\mathfrak{sl}_n(\F)$
its subspace of matrices with trace zero, by $\MatT_n(\F)$ the space of all upper-triangular matrices of $\Mat_n(\F)$, and by
$\MatD_n(\F)$ the space of all diagonal matrices of $\Mat_n(\F)$.
Two subsets $\calX$ and $\calY$ of $\Mat_n(\F)$ are called \textbf{similar}, and we write $\calX \simeq \calY$, whenever there exists
$P \in \GL_n(\F)$ such that $\calY=P\calX P^{-1}=\{PMP^{-1} \mid M \in \calX\}$.

Let $n>0$ and $p>0$ be positive integers, and let $i \in \lcro 1,n\rcro$ and $j \in \lcro 1,p\rcro$.
We denote by $E_{i,j}$ the matrix unit of $\Mat_{n,p}(\F)$ all whose entries are zero with the exception of the one  at the $(i,j)$-spot, which equals $1$
(we understate $n$ and $p$ in this notation, as there will be no risk of confusion when we use it).

\begin{Def}
Let us say that a linear subspace of $\Mat_n(\F)$ is:
\begin{itemize}
\item  \textbf{weakly triangularizable} (respectively \textbf{weakly diagonalisable})
when all its elements are triangularizable over $\F$ (respectively diagonalisable over $\F$), i.e.\ similar to
a matrix of $\MatT_n(\F)$ (respectively, of $\MatD_n(\F)$);
\item  \textbf{strongly triangularizable} (respectively \textbf{strongly diagonalisable})
when it is similar to a subspace of  $\MatT_n(\F)$ (respectively, of $\MatD_n(\F)$).
\end{itemize}
We adopt similar definitions for subspaces of $\End(V)$ when $V$ is a finite-dimensional vector space over $\F$.
\end{Def}
Note that all those properties are preserved in replacing the said space of matrices by a similar space.

\begin{Not}
We denote by $t_n(\F)$ (respectively, by $d_n(\F)$) the greatest possible dimension for a weakly triangularizable (respectively, weakly diagonalisable)
linear subspace of $\Mat_n(\F)$.
\end{Not}

To start with, $\MatT_n(\F)$ is weakly triangularizable, so $t_n(\F) \geq \dbinom{n+1}{2}$.
As the intersection of an arbitrary weakly diagonalisable subspace with the space of all strictly upper-triangular matrices (none of which
is diagonalisable except the zero matrix) must be zero, we also have
$d_n(\F) \leq n^2-\dbinom{n}{2}$, to the effect that
$$d_n(\F) \leq \dbinom{n+1}{2}\leq t_n(\F).$$
Our first motivation for the problem of weakly triangularizable subspaces is that it is a natural variation of
Gerstenhaber's theorem on linear subspaces of nilpotent matrices \cite{Gerstenhaber,Serezhkin}, which
can be viewed as the problem of classifying ``weakly strictly triangularizable" subspaces.
Standard variations on Gerstenhaber's theorem generally deal with limitations on the spectrum of the matrices
\cite{OmladicSemrl,OmladicSivic,dSPlargerank,dSPfeweigenvalues}, or a lower bound for the multiplicity of the eigenvalue zero \cite{Atkinsonzeroeigenvalue}.

Another motivation resides in the observation of a collection of surprising differences between the situation of the real and complex fields.
Indeed, $\Mat_n(\C)$ is weakly triangularizable by the fundamental theorem of algebra, but is far from being strongly triangularizable.
Yet, by the Motzkin-Taussky theorem \cite{MoTau2} every weakly diagonalisable subspace of $\Mat_n(\C)$ is strongly diagonalisable.
Hence
$$d_n(\C)=n \quad \text{and} \quad t_n(\C)=n^2.$$
For real numbers, the situation is very different, as $\Mats_n(\R)$ is weakly diagonalisable with dimension $\dbinom{n+1}{2}$,
to the effect that $d_n(\R)=\dbinom{n+1}{2}$, and it is known \cite{Dobovisek,RandedSP,dSPSEVdiag2} that $\Mats_n(\R)$ is, up to conjugation, the
only weakly diagonalisable subspace of $\Mat_n(\R)$ with this dimension.
Moreover, since $\Mata_n(\R)$ contains no nonzero real-triangularizable matrix\footnote{Indeed every skewsymmetric real matrix is diagonalisable over $\C$
with eigenvalues in $i\R$.} and has dimension $\dbinom{n}{2}$, by intersecting we see that
$t_n(\R) \leq \dbinom{n+1}{2}$, and we conclude that
$$d_n(\R)=\dbinom{n+1}{2}=t_n(\R).$$
Hence, over the complex numbers it is loose to be weakly triangularizable, and very restrictive to be
weakly diagonalisable. But over the reals the greatest possible dimensions for those two properties are the same! And large weakly diagonalisable
subspaces of matrices exist over the reals, but not over the complex numbers.

In making these somewhat simple observations, we discovered that nothing was known on weakly triangularizable subspaces over general fields,
while there has already been substantial research on weakly diagonalisable subspaces \cite{RandedSP,dSPSEVdiag2}.
Therefore, we hope that this article will be a solid first step in this new direction of research.

\subsection{Main results}

Of course $t_n(\F)=n^2$ if and only if every polynomial in $\F[t]$ with degree $n$ splits over $\F$.
We are concerned here with the possibility that $t_n(\F)=\dbinom{n+1}{2}$, and we shall prove that it is in some sense the most common situation.
Before we state our result, it is critical that we clear out what is meant here by a quadratically closed field.

\begin{Def}
The field $\F$ is called \textbf{quadratically closed} when every polynomial of $\F[t]$ with degree $2$ splits over $\F$. \\
We say that $\F$ is \textbf{NRC} (for ``Non Root Closed") whenever $x \in \F \mapsto x^2 \in \F$ is nonsurjective.
\end{Def}

Alternatively, $\F$ is quadratically closed if and only if it has no algebraic extension of degree $2$.
When $\F$ has characteristic other than $2$, it is quadratically closed if and only if it is not NRC, but this is not true
over fields with characteristic $2$. For example, all finite fields with characteristic $2$ are perfect and hence non-NRC,
but none is quadratically closed. We can now state our first main result:

\begin{theo}\label{theo:dim}
Let $\F$ be an arbitrary field, and $n \geq 2$ be an integer.
Assume that $\F$ is not finite with characteristic $2$.
Then $t_n(\F)=\frac{n(n+1)}{2}$ if and only if $\F$ is not quadratically closed.
\end{theo}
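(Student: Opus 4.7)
The theorem splits into two directions.

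\emph{(Quadratically closed direction.)} Suppose $\F$ is quadratically closed. I would exhibit an explicit weakly triangularizable subspace of dimension $\binom{n+1}{2}+1$, namely the space $\calU$ of block matrices
\[
\calU = \left\{ \begin{pmatrix} A & B \\ 0 & T \end{pmatrix} : A \in \Mat_2(\F),\ B \in \Mat_{2,n-2}(\F),\ T \in \MatT_{n-2}(\F)\right\}.
\]
A direct count gives $\dim\calU = 4 + 2(n-2) + \binom{n-1}{2} = \binom{n+1}{2} + 1$, and every element has characteristic polynomial $\chi_A(t)\prod_{i=1}^{n-2}(t-T_{ii})$, which splits over $\F$ because the quadratic factor $\chi_A$ splits in any quadratically closed field.

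\emph{(Not quadratically closed direction.)} Assume $\F$ is not quadratically closed and not finite of characteristic $2$. I would prove $t_n(\F) \leq \binom{n+1}{2}$ by induction on $n$. For $n = 2$: since $\F$ admits an irreducible monic quadratic $p \in \F[t]$, the associated companion matrix $C_p \in \Mat_2(\F)$ is not triangularizable over $\F$, so no weakly triangularizable subspace of $\Mat_2(\F)$ can fill the whole algebra; thus $\dim\calV \leq 3$. For the inductive step, given $\calV \subseteq \Mat_n(\F)$ weakly triangularizable, the main reduction is to find a proper common invariant subspace $W \subseteq \F^n$ of some dimension $k \in \{1,\dots,n-1\}$. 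After conjugating so that $W = \Vect(e_1,\dots,e_k)$, $\calV$ lies in block upper triangular matrices; projecting onto the two diagonal blocks gives weakly triangularizable subspaces in $\Mat_k(\F)$ and $\Mat_{n-k}(\F)$, to which the induction hypothesis applies. Adding the $k(n-k)$ off-diagonal entries and invoking the identity $\binom{k+1}{2} + \binom{n-k+1}{2} + k(n-k) = \binom{n+1}{2}$ yields the bound.

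\emph{(The main obstacle.)} The hardest point is to ensure such a common invariant subspace exists --- or else to bound $\dim\calV$ by some alternative means --- in the ``irreducible'' case where $\calV$ admits no proper nonzero invariant subspace of $\F^n$. This case does arise at the extremal dimension: $\Mats_n(\R)$ over $\R$ has dimension $\binom{n+1}{2}$ and acts irreducibly on $\R^n$. The natural tool is an ``obstruction'' subspace $\calW \subseteq \Mat_n(\F)$ of dimension $\binom{n}{2}$ all of whose nonzero elements fail to be triangularizable over $\F$; one then has $\calV \cap \calW = 0$ and hence $\dim\calV \leq n^2 - \binom{n}{2} = \binom{n+1}{2}$. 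Over $\R$, $\calW = \Mata_n(\R)$ works because the nonzero alternating real matrices have purely imaginary eigenvalues; over more general $\F$, one can try to take $\calW$ to be the space of skew-adjoint matrices with respect to an anisotropic symmetric bilinear form of rank $n$. Producing such a form --- or a workable substitute via the quadratic extension $\F[t]/(p(t))$ attached to an irreducible quadratic $p$, or via a more refined structural analysis tying $\calV$ to copies of $\Mats_k$-type blocks --- is the delicate step, and is presumably where the exclusion of finite fields of characteristic $2$ enters, the arithmetic of quadratic forms in that setting being subtly different (notably via Artin-Schreier extensions).
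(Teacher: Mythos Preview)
Your quadratically-closed direction matches the paper exactly. The other direction, however, has a genuine gap that you yourself flag but do not close. Your inductive scheme hinges either on a common invariant subspace (which, as you note, need not exist: $\Mats_n(\R)$ is irreducible) or on an ``obstruction subspace'' $\calW$ of dimension $\binom{n}{2}$ containing no nonzero triangularizable matrix. Your candidate for $\calW$ --- the $b$-skew-adjoint matrices for an anisotropic symmetric bilinear form $b$ of rank $n$ --- is sound \emph{when such a form exists} (one checks that a nonzero $b$-skew nilpotent is impossible by anisotropy, and any $\F$-eigenvalue must be $0$), but anisotropic forms of rank $n$ simply do not exist over many non-quadratically-closed fields covered by the theorem: over any finite field of odd characteristic, or over $\C(t)$, every quadratic form in three or more variables is isotropic. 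So for $n\geq 3$ your construction collapses over these fields, and no ``workable substitute'' is supplied.

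The paper takes a completely different route, avoiding both invariant subspaces and obstruction subspaces. For NRC fields it proves that any weakly triangularizable $\calS\subseteq\End(V)$ admits an \emph{adapted vector}: some $x\neq 0$ such that $\calS$ contains no nonzero trace-zero operator with range $\F x$. The engine is a small but decisive lemma: if $A,B\in\calS$ have rank $\leq 1$ and trace $0$, then $\tr(AB)=0$ (because $\chi_{A+\alpha B}=t^{n-2}(t^2-\alpha\tr(AB))$ must split for all $\alpha$, forcing $\tr(AB)=0$ since $\F$ is NRC). This trace-orthogonality bounds the span of such rank-one operators by $n^2/2$, and a covering argument (``$2$-complexes'') then produces the adapted vector. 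With $x$ adapted, one passes to $\calS':=\{u\in\calS:u(x)=0\}$, which injects into $\End(V/\F x)$, and the induction goes through with $\dim\calS\leq n+\binom{n}{2}$. For infinite perfect fields of characteristic $2$ (which are not NRC), the paper needs a separate and considerably heavier argument via the trace-orthogonal complement $\calS^\bot$ and Atkinson's theorem on bounded-rank operator spaces; this is indeed where the exclusion of \emph{finite} characteristic-$2$ fields enters, since Atkinson's theorem requires $|\F|\geq n$.
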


Of course $t_1(\F)=1$ whatever the choice of $\F$.
Let us immediately explain the ``only if" implication. If $\F$ is quadratically closed then
every $2$-by-$2$ matrix with entries in $\F$ is triangularizable over $\F$, and hence every matrix of the form
$$\begin{bmatrix}
A & ? \\
0 & U
\end{bmatrix} \quad \text{with $A \in \Mat_2(\F)$ and $U \in \MatT_{n-2}(\F)$}$$
is weakly triangularizable.
The space of all such matrices has dimension $\dbinom{n+1}{2}+1$. And if $n$ is large we can of course create weakly triangularizable spaces whose
dimension largely exceeds $\dbinom{n+1}{2}$, by plugging additional $2$-by-$2$ cells of the form $\Mat_2(\F)$ on the diagonal.

For the remainder of the introduction, we assume that $\F$ is neither quadratically closed nor finite with characteristic $2$, so that $t_n(\F)=\dbinom{n+1}{2}$
(of course in the rest of the introduction we admit the validity of Theorem \ref{theo:dim}).

\begin{Def}
A weakly triangularizable subspace of $\Mat_n(\F)$ is called \textbf{optimal} when its dimension is $t_n(\F)$.
We adopt a similar definition for a weakly triangularizable subspace of $\End(V)$ when $V$ is an $\F$-vector space of dimension $n$.
\end{Def}

\begin{Def}
A linear subspace $\calS$ of $\End(V)$ is called \textbf{irreducible} when no nontrivial subspace of $V$ is invariant under all the elements of $\calS$.
\end{Def}

This prompts us to introduce the notation of the \emph{joint} of two spaces of square matrices.
For linear subspaces $\calA$ and $\calB$, respectively of $\Mat_n(\F)$ and $\Mat_p(\F)$ (with $n>0$ and $p>0$), we denote by
$\calA \vee \calB$ the set of all matrices of the form
$$\begin{bmatrix}
A & C \\
0 & B
\end{bmatrix} \quad \text{with $A \in \calA$, $B \in \calB$ and $C \in \Mat_{n,p}(\F)$,}$$
which we call the \textbf{joint} of $\calA$ and $\calB$.
Obviously, $\calA \vee \calB$ is a linear subspace of $\Mat_{n+p}(\F)$, it is weakly triangularizable if and only if so are $\calA$ and $\calB$,
and if $\dim \calA=\dbinom{n+1}{2}$ and $\dim \calB=\dbinom{p+1}{2}$ then $\dim (\calA \vee \calB)=\dbinom{n+p+1}{2}$.
Hence, under the assumption that $\F$ is neither quadratically closed nor finite with characteristic $2$,
the space $\calA \vee \calB$ is optimal if and only if $\calA$ and $\calB$ are optimal.
A general theory of optimal spaces actually yields the following result
(see Theorem \ref{theo:generalnonsense} in the appendix):

\begin{theo}\label{theo:generaldecomposition}
Let $\F$ be a field.
Let $\calS$ be an optimal weakly triangularizable subspace of $\Mat_n(\F)$.
Then there is a partition $n=n_1+\cdots+n_p$ into positive integers, and for every $i \in \lcro 1,p\rcro$
there is an irreducible optimal weakly triangularizable subspace $\calS_i$ of $\Mat_{n_i}(\F)$ such that
$$\calS \simeq \calS_1 \vee \cdots \vee \calS_p.$$
Moreover, the partition $(n_1,\dots,n_p)$ is uniquely determined by the similarity class of $\calS$,
and so is the similarity class of $\calS_i$ for each $i \in \lcro 1,p\rcro$.

Conversely, let $n=n_1+\cdots+n_p$ be a partition into positive integers, and for each
$i \in \lcro 1,p\rcro$ let $\calS_i$ be an irreducible optimal weakly triangularizable subspace of $\Mat_n(\F)$.
If in addition $t_n(\F)=\sum_{k=1}^p t_{n_k}(\F)+\sum_{1 \leq i<j \leq p} n_i n_j$,
then $\calS_1 \vee \cdots \vee \calS_p$ is an optimal weakly triangularizable subspace of $\Mat_n(\F)$.
\end{theo}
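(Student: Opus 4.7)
The plan hinges on a single key inequality for weakly triangularizable subspaces admitting a nontrivial invariant subspace. Suppose $\calS \subseteq \Mat_n(\F)$ is weakly triangularizable and admits a nontrivial invariant subspace $V_1$ of dimension $n_1$, with $n_2 := n-n_1$. Choosing a basis adapted to $0 \subsetneq V_1 \subsetneq \F^n$ presents every $M \in \calS$ as a block upper-triangular matrix; projecting onto the two diagonal blocks yields weakly triangularizable subspaces of respective dimensions at most $t_{n_1}(\F)$ and $t_{n_2}(\F)$, while the upper-right block contributes at most $n_1 n_2$. Hence
$$\dim \calS \leq t_{n_1}(\F) + t_{n_2}(\F) + n_1 n_2. \qquad (\star)$$
Conversely, the joint of two weakly triangularizable subspaces is weakly triangularizable (a block upper-triangular matrix with triangularizable diagonal blocks has a splitting characteristic polynomial), so $t_n(\F) \geq t_{n_1}(\F)+t_{n_2}(\F)+n_1n_2$.

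For existence, I would induct on $n$. If $\calS$ is irreducible, take $p = 1$; otherwise pick an invariant subspace $V_1$ witnessing $(\star)$. Since $\dim \calS = t_n(\F)$, all three estimates in the derivation of $(\star)$ must saturate: both diagonal projections are themselves optimal weakly triangularizable subspaces of smaller sizes, and the upper-right block fills all of $\Mat_{n_1,n_2}(\F)$. Thus $\calS \simeq \calS'_1 \vee \calS'_2$ with each $\calS'_i$ optimal, and the induction hypothesis applied to each factor yields the desired decomposition $\calS \simeq \calS_1 \vee \cdots \vee \calS_p$ with each $\calS_i$ irreducible and optimal.

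Uniqueness relies on a rigidity statement: whenever $\calR = \calS_1 \vee \cdots \vee \calS_p$ with every $\calS_i$ irreducible, the only invariant subspaces of $\calR$ are those of the canonical flag $V_k := \F^{n_1+\cdots+n_k}$ for $k = 0,\dots,p$. Indeed, an invariant subspace $W$ projects onto $\F^n/V_{p-1}$ to an invariant subspace of $\calS_p$, so to $0$ or to the whole of it. In the first case $W \subseteq V_{p-1}$ and induction on $p$ applies. In the second, using that $\calR$ contains every matrix whose only nonzero block is the upper-right column-block of size $n_p$, we deduce $V_{p-1} \subseteq W$ (a vector $w \in W$ with nonzero last component $w_p$ yields $\{C w_p \mid C \in \Mat_{n_1+\cdots+n_{p-1}, n_p}(\F)\} = V_{p-1}$), and again conclude by induction. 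This forces $(n_1,\dots,n_p)$ to be the ordered sequence of jumps in the canonical invariant flag of $\calS$, and recovers each $\calS_i$ as the induced action on $V_i/V_{i-1}$; both are thus invariants of the similarity class.

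The converse is immediate under the numerical assumption: the joint $\calS_1 \vee \cdots \vee \calS_p$ has dimension $\sum_k t_{n_k}(\F) + \sum_{i<j} n_i n_j = t_n(\F)$ and is weakly triangularizable as already noted, hence optimal. I expect the main technical obstacle to be the rigidity step of the uniqueness proof, which crucially exploits that a joint always contains the full off-diagonal blocks $\Mat_{n_i,n_j}(\F)$ for $i<j$; this is precisely the block-fullness recovered through the saturation of $(\star)$ in the existence step.
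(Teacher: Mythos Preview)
Your proof is correct and follows essentially the same route as the paper's (Appendix A), which casts the argument in a broader framework of general properties on endomorphisms satisfying a ``strong inheritance condition'' over division rings. The key steps match: saturation of your inequality $(\star)$ shows that an optimal reducible space equals the joint of two optimal pieces (with full off-diagonal block), and those full off-diagonal blocks then force the invariant subspaces to coincide with the canonical flag---the paper phrases this last step as ``the $\calS$-invariant subspaces are totally ordered by inclusion'' (Lemma~\ref{lemma:totallyordered}), but the underlying observation is the same as yours, namely that $W \subseteq \calS x$ for every $x \notin W$.
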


Hence Theorem \ref{theo:generaldecomposition} essentially reduces the classification of the optimal weakly triangularizable subspaces of matrices
to the classification of the \emph{irreducible} ones. Moreover, the condition $t_n(\F)=\sum_{k=1}^p t_{n_k}(\F)+\sum_{1 \leq i<j \leq p} n_i n_j$
is clearly satisfied if we have $\forall k \geq 1, \; t_k(\F)=\dbinom{k+1}{2}$, which will be the case here.

Now we can move on to our second main result.

\begin{theo}\label{theo:optimalcarnot2}
Let $\F$ be an infinite NRC field, and let $n \geq 1$.
Let $\calM$ be an irreducible optimal weakly triangularizable subspace of $\Mat_n(\F)$.
Then $\calM$ is similar to $\Mats_n(\F)$ (if $n=1$ then simply $\calM=\F$).
\end{theo}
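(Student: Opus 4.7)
The plan is to first isolate inside $\calM$ a diagonal matrix $D$ with $n$ pairwise distinct eigenvalues, then to exploit the triangularizability of the pencils $D+tM$ for $M\in\calM$ and $t\in\F$ in order to extract a rigid symmetry on the off-diagonal entries, and finally to absorb the resulting scalars into a diagonal conjugation, placing $\calM$ inside $\Mats_n(\F)$; equality then follows from the dimension count $\dim\calM=\binom{n+1}{2}=\dim\Mats_n(\F)$.

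For the first step I would argue that the discriminant of the characteristic polynomial cannot vanish identically on $\calM$. If it did, every element of $\calM$ would have a repeated eigenvalue, and by combining the optimal dimension $\binom{n+1}{2}$, the irreducibility hypothesis, and Gerstenhaber-type bounds on spaces of matrices with restricted spectrum (in the spirit of the results cited in the introduction) one should derive a contradiction, presumably by exhibiting a common nontrivial invariant subspace. Since $\F$ is infinite, the nonzero discriminant takes a nonzero value on $\calM$, and weak triangularizability forces the corresponding eigenvalues into $\F$. After conjugation we may thus assume $D=\Diag(\lambda_1,\dots,\lambda_n)\in\calM$ with pairwise distinct $\lambda_i$.

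The heart of the argument is then the analysis, for each $M=(m_{ij})\in\calM$, of the constraint that the characteristic polynomial of $D+tM$ split over $\F$ for every $t\in\F$. Perturbing the eigenvalues of $D$ and using the infinitude of $\F$ to promote spectral constraints to polynomial identities, the key data one extracts are the products $m_{ij}m_{ji}$ weighted by the eigengaps $\lambda_i-\lambda_j$; running the same analysis over two-parameter pencils $D+tM+sM'$ with $M,M'\in\calM$ should yield a universal proportionality $m_{ji}=\gamma_{ij}m_{ij}$ with $\gamma_{ij}\in\F^\times$ independent of $M$ and satisfying $\gamma_{ij}\gamma_{ji}=1$. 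The NRC hypothesis then forces each $\gamma_{ij}$ to lie in $(\F^\times)^2$, say $\gamma_{ij}=p_j^2/p_i^2$, and the diagonal conjugation by $\Diag(p_1,\dots,p_n)$ normalises all $\gamma_{ij}$ to $1$, embedding $\calM$ into $\Mats_n(\F)$ and, by the dimension count, identifying it with $\Mats_n(\F)$.

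The principal obstacle I anticipate is the rigidification step: upgrading the weak pointwise squareness conditions on products $m_{ij}m_{ji}$, which a priori depend on the particular element $M\in\calM$, into a universal proportionality constant $\gamma_{ij}$ shared by all of $\calM$. This will require a careful exploitation of multi-parameter pencils and of the NRC hypothesis at exactly the right moment in order to exclude competing spaces modelled on indefinite symmetric or alternating bilinear forms, which would otherwise furnish distinct optimal irreducible spaces not similar to $\Mats_n(\F)$.
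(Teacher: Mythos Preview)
Your route is entirely different from the paper's, and as written it has a genuine gap at exactly the point you flag.

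The decisive step --- which you yourself call ``the principal obstacle'' --- is left unresolved: you never explain how, from the splitting of $\chi_{D+tM+sM'}$ for all $s,t\in\F$, one extracts a \emph{universal linear} constraint $m_{ji}=\gamma_{ij}\,m_{ij}$ valid for every $M\in\calM$. For $n=2$ this happens to be easy (once $D$ is diagonal, optimality gives $I_2\in\calM$, so $\calM\supseteq\MatD_2(\F)$ and the remaining one-dimensional off-diagonal part is automatically of the claimed shape). For $n\geq 3$ it is not: $I_n$ and $D$ span only a $2$-dimensional piece of $\MatD_n(\F)$, so there is no a priori reason the $\binom{n}{2}$ linear equations cutting out $\calM$ decouple into $(i,j)$-pairs rather than mixing several off-diagonal positions. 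Over a general NRC field the condition ``$p(t)$ is a square in $\F$ for every $t$'' for a polynomial $p$ does \emph{not} force $p$ to be a perfect square in $\F[t]$, so the perturbative reading you sketch does not deliver polynomial identities on the entries. Even granting the $\gamma_{ij}$, you would still need the cocycle relation $\gamma_{ij}\gamma_{jk}=\gamma_{ik}$ (not merely $\gamma_{ij}\gamma_{ji}=1$ together with $\gamma_{ij}\in(\F^\times)^2$) to absorb them into a single diagonal conjugation, and this is not addressed. The opening step is also only gestured at: the Loewy--Radwan-type bounds you invoke are proved over algebraically closed fields, and their validity for an arbitrary infinite NRC field would have to be checked.

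The paper bypasses all of this by dualising. It passes to the trace-orthogonal $\calS^\bot$ and then to the dual-operator space $\widehat{\calS^\bot}\subseteq\Hom(\calS^\bot,V)$, shows (from optimality and irreducibility, via an Erasure Lemma) that this is a reduced space of operators of rank at most $n-1$, and applies Atkinson's structure theorem for target-semiprimitive bounded-rank spaces. In the semiprimitive case Atkinson produces a nondegenerate bilinear form $c$ on $V$ with $\calS^\bot$ equal to the full space $\calA_c$ of $c$-alternating endomorphisms, whence $\calS$ is represented by $\Mats_n(\F)P$ for the (necessarily symmetric) Gram matrix $P$; a short direct computation with $(E_{1,i}+E_{i,1})P$ then shows $P$ is congruent to a scalar matrix. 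The non-semiprimitive case is eliminated, by induction and the Erasure Lemma, by exhibiting an $\calS$-invariant subspace. The ``universal proportionality'' structure you are aiming for thus falls out of Atkinson's theorem rather than from any pencil analysis, and I do not see how to complete your approach without machinery of comparable strength.
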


If $\F$ is NRC and infinite, we know from Theorem \ref{theo:dim} that $t_n(\F)=\dim \Mats_n(\F)$.
And it is clear that $\Mats_n(\F)$ is irreducible (as classically $\Mats_n(\F) X=\F^n$ for all $X \in \F^n \setminus \{0\}$).
However $\Mats_n(\F)$ need not be weakly triangularizable.
Remember that the field $\F$ is called \textbf{Pythagorean} if the sum of two squares in $\F$ is a square in $\F$.
If $\F$ is not Pythagorean then there exist $a,b$ in $\F$ such that $a^2+b^2$ is not a square, and hence
$\begin{bmatrix}
a & b \\
b & -a
\end{bmatrix}$ has no eigenvalue in $\F$. Conversely, if $\F$ is Pythagorean and $\car(\F) \neq 2$ then it is clear from the same example
(because we can combine it linearly with $I_2$) that all the matrices of $\Mats_2(\F)$ are triangularizable.

Yet, if $\F$ is Pythagorean with $\car(\F) \neq 2$ but $\F$ is not formally real, then $\F$ is not NRC.
Indeed, in that case $-1$ is a sum of squares; then
every element $x\in \F$ can we written $x=\left(\frac{1+x}{2}\right)^2+(-1)\left(\frac{1-x}{2}\right)^2$
and hence is a sum of squares, and hence a square because $\F$ is Pythagorean.

Finally, if $\Mats_n(\F)$ is weakly triangularizable and $n \geq 2$, then $\Mats_{n-1}(\F)$ is weakly triangularizable (for
$M \in \Mats_{n-1}(\F)$, simply consider the extended matrix $M \oplus 0 \in \Mats_n(\F)$).

Hence, if $\F$ is infinite and NRC:
\begin{itemize}
\item Either every symmetric matrix over $\F$ is triangularizable, in which case for all $n \geq 1$
there is up to similarity exactly one irreducible optimal weakly triangularizable subspace of $\Mat_n(\F)$, namely $\Mats_n(\F)$;
\item Or there is a greatest integer $N \geq 1$ such that every symmetric $N$-by-$N$ matrix over $\F$ is triangularizable,
and then, for all $n \in \lcro 1,N\rcro$, there is up to similarity exactly one irreducible optimal weakly triangularizable subspace of $\Mat_n(\F)$, namely $\Mats_n(\F)$,
and for all $n>N$ there is no irreducible optimal weakly triangularizable subspace of $\Mat_n(\F)$.
\item As a consequence, if $\F$ does not have characteristic $2$ and there exists $n \geq 2$ such that $\Mat_n(\F)$ contains an irreducible optimal weakly triangularizable subspace, then $\F$ must be Pythagorean and formally real.
\end{itemize}
Remembering that if $\F$ is formally real, then every matrix of $\Mats_n(\F)$ is semi-simple\footnote{This is a simple consequence
of the fact that if we have a selfadjoint operator $u$ for a symmetric bilinear form $b$ that leaves some subspace $U$ invariant,
the orthogonal complement $U^\bot$ under $b$ is invariant under $u$: here the key is that the standard inner product
$(X,Y) \mapsto X^T Y$ is nonisotropic thanks to the assumption that $\F$ is formally real.},
we deduce that if $\F$ is not NRC and $\car(\F) \neq 2$ then $\Mats_n(\F)$ is weakly triangularizable if and only if it is weakly diagonalisable.
It is known that $\Mats_n(\F)$ is weakly diagonalisable for all $n \geq 1$
if and only if $\F$ is a formally real field and is the intersection of its real closures in a given algebraic closure
\cite{Waterhouse}, but sensible characterizations of the condition that $\Mats_n(\F)$ is weakly diagonalisable for a \emph{given} $n \geq 1$
are not known besides the easy cases where $n \in \{1,2\}$.

It remains to consider the case of NRC fields with characteristic $2$, which is quite interesting.
In section \ref{section:appendixB}, we demonstrate that there are such fields $\F$ for which $\Mats_2(\F)$ is weakly triangularizable (and we even characterize them in a simple way), but that $\Mats_3(\F)$ cannot be weakly triangularizable in such cases.

Since all fields with characteristic $2$ are Pythagorean, we deduce the following nice result
(derived from Theorems \ref{theo:generaldecomposition} and \ref{theo:optimalcarnot2}):

\begin{cor}\label{theo:nonPythagorean}
Let $\F$ be an infinite non-Pythagorean field, and $n \geq 1$ be an integer.
Then every optimal weakly triangularizable subspace of $\Mat_n(\F)$ is conjugated to $\MatT_n(\F)$.
\end{cor}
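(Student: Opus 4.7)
The plan is to combine Theorems \ref{theo:generaldecomposition} and \ref{theo:optimalcarnot2} with the elementary observation that over a non-Pythagorean field $\Mats_2(\F)$ already fails to be weakly triangularizable, which forces every irreducible summand to have size $1$.

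First I would reduce to the setting where the earlier theorems apply. Since every field of characteristic $2$ satisfies $(a+b)^2=a^2+b^2$, the set of squares is closed under addition, so every field of characteristic $2$ is Pythagorean; hence the hypothesis forces $\car(\F) \neq 2$. Moreover, by the very definition of Pythagorean, non-Pythagoreanness supplies $a,b \in \F$ with $a^2+b^2$ not a square, so the map $x\mapsto x^2$ misses $a^2+b^2$, i.e.\ $\F$ is NRC. Thus $\F$ is an infinite NRC field of characteristic $\neq 2$, and Theorem \ref{theo:dim} gives $t_k(\F)=\binom{k+1}{2}$ for every $k\geq 1$. In particular any optimal weakly triangularizable $\calS \subseteq \Mat_n(\F)$ has dimension $\binom{n+1}{2}$.

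Next I would invoke Theorem \ref{theo:generaldecomposition} to write $\calS \simeq \calS_1 \vee \cdots \vee \calS_p$ for some partition $n=n_1+\cdots+n_p$, where each $\calS_i \subseteq \Mat_{n_i}(\F)$ is an irreducible optimal weakly triangularizable subspace. By Theorem \ref{theo:optimalcarnot2}, each $\calS_i$ is similar to $\Mats_{n_i}(\F)$, and in particular $\Mats_{n_i}(\F)$ itself must be weakly triangularizable.

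The core of the argument is now to show that each $n_i$ equals $1$. Suppose some $n_i \geq 2$. Then the text's remark that weak triangularizability of $\Mats_k(\F)$ descends to $\Mats_{k-1}(\F)$ (by the embedding $M \mapsto M \oplus 0$) implies that $\Mats_2(\F)$ is weakly triangularizable. But with $a,b \in \F$ chosen so that $a^2+b^2$ is not a square in $\F$, the symmetric matrix $\begin{bmatrix} a & b \\ b & -a \end{bmatrix}$ has characteristic polynomial $t^2-(a^2+b^2)$, which is irreducible over $\F$; hence this matrix is an element of $\Mats_2(\F)$ that is not triangularizable over $\F$, a contradiction. Therefore all $n_i=1$, each $\calS_i=\F$, and $\calS \simeq \F \vee \cdots \vee \F = \MatT_n(\F)$, as required.

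There is no real obstacle here beyond assembling the pieces correctly: the only subtlety is in verifying that the hypotheses of Theorem \ref{theo:optimalcarnot2} are met (i.e.\ that $\F$ is infinite and NRC), which is handled by the opening paragraph, and the rest is a direct synthesis.
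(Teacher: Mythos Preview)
Your proof is correct and follows precisely the approach the paper outlines in the discussion surrounding the corollary: verify that a non-Pythagorean field has characteristic $\neq 2$ and is NRC, apply Theorems \ref{theo:generaldecomposition} and \ref{theo:optimalcarnot2} to get irreducible summands similar to $\Mats_{n_i}(\F)$, and use the non-triangularizable $2\times 2$ symmetric matrix to force all $n_i=1$. There is nothing to add.
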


\vskip 3mm
For perfect fields with characteristic $2$, things are different. For example, over such a field every $2$-by-$2$ trace zero
matrix is triangularizable (because its characteristic polynomial reads $t^2-\alpha$ for some $\alpha \in\F$),
and hence the space $\mathfrak{sl}_2(\F)$ of all such matrices is a $3$-dimensional weakly triangularizable subspace, and an optimal one
if $\F$ is not quadratically closed. And obviously $\mathfrak{sl}_2(\F)$ is irreducible.
Interestingly, it is the only example of such an optimal irreducible space if $\F$ is infinite:

\begin{theo}\label{theo:car2}
Let $\F$ be an infinite field with characteristic $2$. Assume also that $\F$ is perfect but not quadratically closed.
Then $t_n(\F)=\dbinom{n+1}{2}$ for every integer $n \geq 1$, and the only irreducible optimal weakly triangularizable
matrix spaces over $\F$ are $\Mat_1(\F)$ and $\mathfrak{sl}_2(\F)$.
\end{theo}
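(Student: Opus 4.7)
The dimensional assertion $t_n(\F)=\binom{n+1}{2}$ is immediate from Theorem \ref{theo:dim}: $\F$ is infinite (hence not finite with characteristic $2$) and assumed not quadratically closed. What requires work is the classification of the irreducible optimal examples.

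The case $n=1$ is trivial. For $n=2$, I would first verify that $\mathfrak{sl}_2(\F)$ is an irreducible optimal weakly triangularizable subspace: its dimension is $3=t_2(\F)$; the matrices $E_{1,2}$ and $E_{2,1}$ that it contains share no common invariant line, yielding irreducibility; and for $M\in\mathfrak{sl}_2(\F)$ the characteristic polynomial reads $t^2+\det M$ (because $\car\F=2$), which factors as $(t+\sqrt{\det M})^2$ since $\F$ is perfect of characteristic $2$. For uniqueness, any irreducible optimal $\calM\subset\Mat_2(\F)$ is a hyperplane, thus of the form $A^\perp:=\{M:\tr(AM)=0\}$ for a nonzero $A$, determined up to a nonzero scalar, and the similarity class of $A$ governs that of $\calM$. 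Exhausting the similarity classes of $A$ up to scaling --- scalar (yielding $\mathfrak{sl}_2(\F)$), rank-one nilpotent (yielding $\MatT_2(\F)$ up to conjugation, hence reducible), rank-one non-nilpotent, rank-two non-scalar semisimple, and rank-two non-semisimple --- I would rule out every non-scalar case either by producing a common invariant line, or by exhibiting, using the non-quadratic-closedness hypothesis, an explicit element of $A^\perp$ whose characteristic polynomial $t^2+\alpha t+\beta$ with $\alpha\neq 0$ satisfies $\beta/\alpha^2$ outside the Artin--Schreier image, and hence does not split over $\F$.

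For $n\geq 3$ the claim is that no irreducible optimal weakly triangularizable subspace exists, which I would establish by contradiction. Assume such $\calM\subset\Mat_n(\F)$. A first easy observation is that $I_n\in\calM$: indeed $\calM+\F I_n$ is again weakly triangularizable, and by optimality its dimension cannot exceed $\dim\calM$. The heart of the argument --- and the main obstacle I expect --- is to leverage irreducibility to produce a rigid three-dimensional substructure of $\calM$. I would pick a vector $v\in\F^n$ whose $\calM$-orbit spans $\F^n$ (available by irreducibility and a Burnside-type argument), build a flag $\F v\subset V_1\subset V_2\subset\F^n$ of length $3$ adapted to a well-chosen element of $\calM$, and compress $\calM$ in the induced basis to obtain $3\times 3$ blocks that inherit enough of the optimality and weak triangularizability of $\calM$. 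The aim is to exhibit inside $\calM$ a matrix whose characteristic polynomial factors as $(t-\gamma)(t^2+\alpha t+\beta)$ with $\alpha\neq 0$ chosen, via the non-quadratic-closedness of $\F$, so that the quadratic factor does not split; perfectness of $\F$ enters here to prevent an accidental Artin--Schreier-type solution. This contradicts the weak triangularizability of $\calM$ and completes the classification.
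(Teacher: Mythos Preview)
Your opening move is circular. You invoke Theorem~\ref{theo:dim} to obtain $t_n(\F)=\binom{n+1}{2}$, but for perfect infinite fields of characteristic~$2$ the paper's proof of Theorem~\ref{theo:dim} is deferred precisely to Section~\ref{section:car2}, i.e.\ to the proof of Theorem~\ref{theo:car2} itself (see the last paragraph of Section~\ref{goodlemmaproof} and the final sentence of Section~\ref{section:car2}). The adapted-vector machinery of Section~\ref{section:adaptedvectors} requires $\F$ to be NRC, and a perfect field of characteristic~$2$ has surjective squaring map, so it is \emph{never} NRC. You therefore have no dimension bound available to start with, and any argument you give must establish $t_n(\F)\le\binom{n+1}{2}$ from scratch.

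Your $n=2$ treatment is in the right spirit and close to the paper's own case analysis in Section~\ref{section:n=2car2}. But your proposal for $n\ge3$ does not amount to a proof. Picking a vector $v$ whose ``$\calM$-orbit spans $\F^n$'' invokes a Burnside-type statement that applies to algebras, not to linear subspaces; there is no reason such a $v$ exists for a mere vector space of operators. The subsequent ``compression to $3\times3$ blocks that inherit enough optimality and weak triangularizability'' is asserted without any mechanism: weak triangularizability is not preserved under arbitrary compression, and optimality certainly is not. Finally, your remark that ``perfectness of $\F$ enters here to prevent an accidental Artin--Schreier-type solution'' is backwards --- perfectness makes \emph{more} quadratics split (all the inseparable ones), not fewer.

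The paper's route is entirely different and substantially heavier. It runs an induction that simultaneously establishes the dimension bound and the nonexistence of irreducible optimal spaces for $n\ge3$: one assumes an irreducible weakly triangularizable $\calS$ of dimension $\binom{n+1}{2}$ or $\binom{n+1}{2}+1$, passes to the trace-orthogonal $\calS^\bot$ and its dual operator space $\widehat{\calS^\bot}$, and applies Atkinson's theorem on target-semiprimitive spaces of bounded rank. In the semiprimitive case one is forced into $\calS\simeq\Mats_n(\F)$, which one then shows is not weakly triangularizable (via the $n=2$ classification); in the non-semiprimitive case a Special Erasure Lemma adapted to $\mathfrak{sl}_2(\F)$-cells produces an invariant subspace, contradicting irreducibility. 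None of this structure is visible in your sketch.
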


Consequently, the optimal weakly triangularizable matrix spaces over a field $\F$
that satisfy the assumptions of Theorem \ref{theo:car2} are conjugates of joints of copies of
$\Mat_1(\F)$ and $\mathfrak{sl}_2(\F)$.

\subsection{Strategy, and structure of the article}

It is difficult to disprove the triangularizability of a matrix with little knowledge of the specifics of the underlying field.
Fortunately, the ``non-quadratically closed" assumption can be put to good use, but only with matrices of very small rank
(essentially, matrices of rank $2$). So, one of our main tools will of course be the characteristic polynomial
$$\chi_M=t^n-\tr(M)\, t^{n-1}+c_2(M)\, t^{n-2}-\cdots+(-1)^n \det M$$
of a matrix $M \in \Mat_n(\F)$, and more specifically the $c_2$ coefficient.
We recall that $c_2$ is a nondegenerate quadratic form on $\Mat_n(\F)$, with polar form
$$b_2 : (A,B) \mapsto \tr(A)\tr(B)-\tr(AB)=c_2(A+B)-c_2(A)-c_2(B)$$
(notice the absence of any division by $2$, so as to take fields with characteristic $2$ into account).
Remarkably, the present article will combine \emph{all} the main methods that we have discovered in the last decade to study similar issues:
\begin{itemize}
\item The adapted vectors method, that was designed to study spaces of matrices with limited number of distinct eigenvalues
\cite{dSPfeweigenvalues}.
\item The dual-orthogonality method, that was used to relate the study of weakly diagonalisable subspaces
to the one of trivial spectrum subspaces
\cite{dSPSEVdiag2}.
\item The operator-vector duality method, that has already yielded spectacular results in the study of trivial spectrum spaces
\cite{dSPAtkinsontoGerstenhaber}.
\end{itemize}

Now, we can explain the main strategy. We start with a definition:

\begin{Def}
Let $V$ be a finite-dimensional vector space, and $\calS$ be a linear subspace of $\End(V)$.
A nonzero vector $x \in V \setminus \{0\}$ is said to be \textbf{$\calS$-adapted}
when $\calS$ contains no nonzero operator with range $\F x$ and trace zero.
\end{Def}

The key point is that every weakly triangularizable subspace of $\End(V)$ has an adapted vector provided that the underlying field is NRC
(see Proposition \ref{prop:goodprop} in Section \ref{section:adaptedvectors}).
The proof of this statement is almost a word-for-word adaptation of the one of proposition 2.1 in \cite{dSPfeweigenvalues}, but since there are slight differences
and the reader might not want to look elsewhere, we have decided to include it. From there, Theorem \ref{theo:dim} is easily obtained by induction on $n$
(see Section \ref{section:fromadaptedtomajo}).
However, this strategy only works for NRC fields.

Next, we take an optimal weakly triangularizable space $\calS \subseteq \End(V)$, and we assume that it is irreducible and that $\F$ is infinite. The key is to move the problem to the orthogonal space
$\calS^\bot \subseteq \End(V)$ for the nondegenerate symmetric bilinear form
$$(u,v) \in \End(V)^2 \mapsto \tr(uv).$$
And then we introduce a third space of linear operators: the dual-operator space
$\widehat{\calS^\bot} \subseteq \Hom(\calS^\bot,V)$ that consists of all the evaluation mappings
$$\widehat{x} : u \in \calS^\bot \longmapsto u(x) \in V, \quad \text{where $x \in V$.}$$
It will be shown (Claim \ref{claim:Sbotreduced}) that $\calS^\bot$ satisfies the following weak form of irreducibility:

\begin{Def}
Let $U$ and $U'$ be vector spaces, and $\calT$ be a linear subspace of $\Hom(U,U')$.
We say that $\calT$ is:
\begin{itemize}
\item \textbf{source-reduced} when there is no $x \in U \setminus \{0\}$ such that $u(x)=0$ for all $u \in \calT$;
\item \textbf{target-reduced} when there is no linear hyperplane $H'$ of $U'$ such that $u(x) \in H'$ for all $u \in \calT$ and all $x \in U$;
\item \textbf{reduced} when it is both source-reduced and target-reduced.
\end{itemize}
\end{Def}

\begin{Not}
Let $U$ and $U'$ be finite-dimensional vector spaces, and $\calT$ be a linear subspace of $\Hom(U,U')$. We set
$$\maxrk \calT:=\max \{\rk u \mid u \in \calT\},$$
called the maximal rank in $\calT$.
\end{Not}

The optimality of $\calS$ will help us prove that $\maxrk \widehat{\calS^\bot} \leq n-1$, and
the existence of an $\calS$-adapted vector will even yield $\maxrk \widehat{\calS^\bot} = n-1$.
One difficulty is that it is not clear that $\calS^\bot$ should have \emph{trivial spectrum}
\cite{dSPlargerank,dSPAtkinsontoGerstenhaber}, meaning that none of its elements has a nonzero eigenvalue in $\F$.
However, one can show that a strengthened form of Atkinson's theorem on primitive spaces of bounded rank matrices
\cite{Atkinson,dSPLLD2} can still be used to decipher the structure of $\widehat{\calS^\bot}$.
Before we can be more explicit, an additional definition will be useful.

\begin{Def}
Let $U$ and $U'$ be finite-dimensional vector spaces, and $\calT$ be a linear subspace of $\Hom(U,U')$.
We say that $\calT$ is \textbf{target-semiprimitive} whenever it is reduced and
there is no $1$-dimensional linear subspace $D'$ of $U'$ such that $\maxrk (\calT \modu D')<\maxrk (\calT)$
where
$$\calT \modu D' :=\pi \calT=\{\pi \circ f \mid f \in \calT\}$$
for the canonical projection $\pi : U' \twoheadrightarrow U'/D'$.
\end{Def}

Beware that this definition is different from the one of semi-primitivity adopted in \cite{dSPLLD2} and \cite{dSPAtkinsontoGerstenhaber}
(which should be called ``source-semiprimitivity"), but one can easily adapt the situation by taking the transpose of the operator space under consideration.

Then:
\begin{itemize}
\item In the ``good" situation, $\widehat{\calS^\bot}$ is target-semiprimitive and our adaptation of Atkinson's theorem to target-semiprimitive operator spaces
\cite{dSPLLD2} readily yields that $\calS^\bot$ is represented in some basis by $P^{-1} \Mata_n(\F)$ for some invertible matrix $P$
(meaning that the elements of $\calS^\bot$ represent the $b$-alternating operators for some nondegenerate bilinear form $b$ on $V$); from there
one recovers that $\calS$ is represented by $\Mats_n(\F)P$ in the same basis; and it is not difficult to conclude that $\calS$ is represented by $\Mats_n(\F)$ in some basis.
\item In the ``bad" situation, $\widehat{\calS^\bot}$ is not target-semiprimitive, but one can nevertheless use the weak conclusion in Atkinson's theorem to find that there
is a high concentration of operators in $\calS^\bot$ with the same ``small" range, which allows one to recover critical information on $\calS$ and obtain its reducibility
(see the treatment of Case 2 in Section \ref{section:optimalAtkinson}). The treatment of this difficult case is one of the main innovations of the present article.
\end{itemize}

For finite fields (actually for fields with cardinality less than the dimension of the matrices),
the method is impractical because of the failure of Atkinson's theorem.
However, for such fields with characteristic other than $2$, we have to prove that $\calS$ is triangularizable, so we are inclined to think
that the result can be achieved in a foreseeable future thanks to the classical methods that use adapted vectors (but with far greater technical details than the proof given here).
In any case, finite fields with characteristic $2$ will probably be a greater challenge than the other finite fields, because of the failure
of the adapted vectors method in that case. We speculate that an adaptation of the method that would take rank $1$ idempotents into account might
help, but this will probably be more difficult than one can imagine.

So far, we have not considered perfect fields with characteristic $2$.
For those that are not quadratically closed yet infinite, the strategy is slightly different in that we do not directly try to prove the existence of an adapted vector: we prefer not to discuss things here, and refer the reader to
Section \ref{section:car2}, which must be read after the previous ones because it is highly derivative of the strategies developed in Section
\ref{section:optimal}.

\section{Adapted vectors}\label{section:adaptedvectors}

The present section is devoted to an adaptation of the adapted vectors method that was used in \cite{dSPfeweigenvalues}
to study spaces of matrices with at most two eigenvalues. As a consequence, we will obtain the validity of Theorem \ref{theo:dim}
over NRC fields. It is all the more intriguing that
these two problems (spaces of triangularizable matrices on the one hand, spaces of matrices with few eigenvalues on the other hand)
look to be polar opposites.

\subsection{The main result and why it yields Theorem \ref{theo:dim} for NRC fields}\label{section:fromadaptedtomajo}

Remember that, given a linear subspace $\calS$ of $\End(V)$ (where $V$ is a finite-dimensional vector space), a vector $x \in V \setminus \{0\}$
is called $\calS$-adapted whenever $\calS$ contains no operator with range $\F x$ and trace zero.

Here is our key result:

\begin{prop}\label{prop:goodprop}
Assume that $\F$ is an NRC field.
Let $\calS$ be a weakly triangularizable subspace of $\End(V)$, where $V$ is an $n$-dimensional vector space over $\F$, with $n>0$.
Then $\calS$ has an adapted vector.
\end{prop}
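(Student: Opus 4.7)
I would argue by contradiction. Assume $\calS$ has no adapted vector, so that for every nonzero $w \in V$ the subspace
$$E_w := \{u \in \calS : \im u \subseteq \F w,\ \tr u = 0\}$$
is nonzero. Each nonzero $u \in E_w$ has the form $u = w \otimes \varphi$ with $\varphi \in V^*$ satisfying $\varphi(w) = 0$. The NRC hypothesis supplies some $\alpha \in \F$ that is not a square, so $t^2 - \alpha$ is irreducible over $\F$; the goal is to produce $M \in \calS$ whose characteristic polynomial contains some irreducible factor of the form $t^2 - \beta$ with $\beta$ a non-square, contradicting weak triangularizability.

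The basic construction is the following. Pick any $x \in V \setminus \{0\}$, a nonzero $u = x \otimes \varphi \in E_x$, and a vector $y$ with $\varphi(y) = 1$, so that $(x,y)$ spans a plane $P$. For each nonzero $v = y \otimes \psi \in E_y$ (which exists by hypothesis), $P$ is $(u+v)$-invariant, and a direct computation in the basis $(x,y)$ gives
$$(u+v)|_P \;=\; \begin{bmatrix} 0 & 1 \\ \psi(x) & 0 \end{bmatrix},$$
whose characteristic polynomial is $t^2 - \psi(x)$. Since $P$ is invariant, this polynomial divides $\chi_{u+v}$; so if $\psi(x)$ can be chosen to be a non-square, then $u+v \in \calS$ is not triangularizable, and we are done.

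The main obstacle is therefore to exclude the degenerate subcase where $\psi(x)$ is forced into the squares of $\F$. Here I would use linearity: the map $v \in E_y \longmapsto \psi(x) \in \F$ is $\F$-linear, so its image is either $\F$ (in which case we pick $\psi(x) = \alpha$ and win) or $\{0\}$. In the degenerate alternative every $v \in E_y$ kills $x$, so $v$ acts trivially on $P$ and its kernel contains the plane $P$ itself. One then rebuilds the configuration: substitute $y$ by $y + \lambda x$ (still a legitimate second vector) or swap the roles of $x$ and $y$, and pick rank-$1$ trace-$0$ operators attached to the new vectors via the no-adapted-vector assumption. At each such iteration the common kernel of the operators we encounter grows; since $\dim V$ is finite, the descent must terminate in the non-degenerate subcase. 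Carrying out this dimension-count bookkeeping is the technical heart of the argument, and I expect it to follow the same template as Proposition~2.1 of \cite{dSPfeweigenvalues}, which the author explicitly cites as the model.
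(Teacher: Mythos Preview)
Your basic construction is correct and your dichotomy is sound: the linear map $v\in E_y\mapsto \psi(x)$ has image either $\F$ or $\{0\}$, and in the first case the choice $\psi(x)=\alpha$ yields a nontriangularizable element of $\calS$. The trouble is that your own argument shows the first case \emph{never} occurs. Indeed, once you know $t^2-\psi(x)$ divides $\chi_{u+v}$ for every $v\in E_y$, the image of $v\mapsto\psi(x)$ consists entirely of squares, and a linear subspace of $\F$ made of squares over an NRC field is $\{0\}$. So you are \emph{always} in the degenerate subcase $\psi(x)=0$; equivalently, $\tr(uv)=0$ for all rank-$1$ trace-$0$ operators $u,v\in\calS$. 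This is exactly Lemma~\ref{tracelemma} of the paper, and it is where your argument really stands.

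The descent you sketch afterwards cannot close the gap. You claim the iteration ``must terminate in the non-degenerate subcase'', but that subcase is impossible, so there is nothing to terminate in. Concretely, replacing $y$ by $y+\lambda x$ or swapping $x$ and $y$ still lands you in the degenerate situation, and the observation that $P\subseteq\ker v$ carries no information beyond $n=2$ (where it does work, since then $v=0$). For $n\geq 3$ each rank-$1$ operator has a hyperplane kernel regardless; there is no mechanism forcing the common kernel of a growing family to exceed codimension~$1$ per operator, and no dimension count that collapses. The paper's proof proceeds very differently from this point: from $\tr(uv)=0$ it deduces (Corollary~\ref{majdim1star}) that the span of the rank-$1$ trace-$0$ elements of $\calS$ is totally isotropic for $(A,B)\mapsto\tr(AB)$, hence has dimension at most $n^2/2$; it then runs an induction on $n$ via the ``$2$-complex'' covering lemma (Lemma~\ref{goodlemma}) to manufacture, inside $\calS$, a direct sum $\calS_1\oplus\cdots\oplus\calS_n$ of rank-$1$ trace-$0$ operators with $\dim\calS_i>\lfloor n/2\rfloor$ for each $i$, forcing total dimension $>n^2/2$ and a contradiction. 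That counting step is the genuine content you are missing.
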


Let us immediately explain how this result yields the validity of Theorem \ref{theo:dim} for NRC fields.

So, we assume the validity of Proposition \ref{prop:goodprop}, and we prove Theorem \ref{theo:dim} for NRC fields by induction on $n$.
Note that the result is obvious if $n=1$. So, assume that $\F$ is an NRC field and let $n \geq 2$.
By Proposition \ref{prop:goodprop}, we can choose an $\calS$-adapted vector $x$.
Consider the subspace
$$\calS':=\{u \in \calS : u(x)=0\}.$$
Applying the rank theorem to $u \mapsto u(x)$ yields
$$\dim \calS \leq n+\dim \calS'.$$
Every $u \in \calS'$ induces a triangularizable endomorphism $\overline{u}$ of the quotient space $V/\F x$.
The mapping $\pi : u \in \calS' \mapsto \overline{u} \in \End(V/\F x)$ is linear, and its kernel is zero because $x$ is $\calS$-adapted.
By induction $\dim \pi(\calS') \leq \dbinom{n}{2}$, and hence
$$\dim \calS \leq n+\dim \pi(\calS')=\dbinom{n+1}{2}.$$

The remainder of the present section is devoted to the proof of Proposition \ref{prop:goodprop}.
In fact, we shall prove a more precise result on the $\calS$-adapted vectors.

\begin{Def}
Let $V$ be an $n$-dimensional vector space over $\F$. A \textbf{$2$-complex} of $V$
is an $n$-tuple $(V_i)_{1 \leq i \leq n}$ of linear subspaces of $V$ such that
$\dim V_i=\lfloor \frac{i+1}{2}\rfloor$ for all $i \in \lcro 1,n\rcro$.
\end{Def}

\begin{lemma}\label{goodlemma}
Assume that $\F$ is an NRC field.
Let $V$ be an $n$-dimensional vector space over $\F$, and $\calS$
be a weakly triangularizable subspace of $\End(V)$.
Then the union of a $2$-complex of $V$ never contains all the $\calS$-adapted vectors.
\end{lemma}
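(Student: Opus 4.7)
The plan is to argue by induction on $n = \dim V$, in the contrapositive form: I assume that every $\calS$-adapted vector of $V$ lies in $V_1 \cup \cdots \cup V_n$ and produce an element of $\calS$ that is not triangularizable over $\F$, contradicting the hypothesis on $\calS$. The central building block is supplied by the non-adapted vectors: for each $x \in V \setminus \bigcup_i V_i$, the standing assumption yields a nonzero $u_x \in \calS$ with $\im u_x = \F x$ and $\tr u_x = 0$; since $u_x$ has rank one and trace zero, it is nilpotent with $u_x^2 = 0$, so $\ker u_x = \F x$. The NRC hypothesis enters through the characteristic polynomial: in degree $n$, whenever one can force the coefficient $c_2$ (or $\det$, in degree $2$) of a one-parameter pencil in $\calS$ to range freely over $\F$ while the other coefficients stay under control, NRC produces a member of the pencil whose characteristic polynomial does not split.

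The base case $n = 2$ is instructive and should be treated directly. Since $\F$ is NRC one has $|\F| \geq 3$, so the two lines $V_1, V_2$ do not exhaust $V$ and one can select two linearly independent $x_1, x_2 \in V \setminus (V_1 \cup V_2)$. The pencil $u_{x_1} + t\,u_{x_2}$ is trace-free, so its characteristic polynomial equals $T^2 + \det(u_{x_1} + t u_{x_2})$. The polar identity $b_2(A,B) = \tr(A)\tr(B) - \tr(AB)$ for $c_2 = \det$ on $\Mat_2(\F)$, together with $\tr u_{x_i} = \det u_{x_i} = 0$, gives
\[
\det(u_{x_1} + t\,u_{x_2}) = -t\,\tr(u_{x_1} u_{x_2}).
\]
Writing $u_{x_i}(v) = \alpha_i(v)\, x_i$ with $\alpha_i(x_i)=0$, a short rank-one computation yields $\tr(u_{x_1} u_{x_2}) = \alpha_1(x_2)\,\alpha_2(x_1)$, which is nonzero because $x_1 \notin \F x_2 = \ker u_{x_2}$ and $x_2 \notin \F x_1 = \ker u_{x_1}$. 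Thus $-t\,\tr(u_{x_1}u_{x_2})$ ranges over all of $\F$ as $t$ varies; the NRC property provides a $t$ for which the value is not a square, and the corresponding $u_{x_1}+t u_{x_2}\in \calS$ is then not triangularizable, the desired contradiction.

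For the inductive step ($n \geq 3$) the plan is to pass to a quotient. Pick an eigenvector $y$ of some $u \in \calS$ (which exists by weak triangularizability) and consider $\overline V := V/\F y$. The dimension sequence $\lfloor (i+1)/2 \rfloor$ is tailored precisely so that omitting the largest subspace $V_n$ and projecting $V_1,\ldots,V_{n-1}$ into $\overline V$ produces a $2$-complex of $\overline V$ of shape $1,1,2,2,\ldots,\lfloor n/2 \rfloor$, provided that $y$ is chosen so that $y \notin V_i$ for every $i<n$ (so no projected dimension drops). The subspace $\calS_y := \{v \in \calS : v(\F y) \subseteq \F y\}$ of $\calS$ descends to a weakly triangularizable space $\overline{\calS_y} \subseteq \End(\overline V)$, and the induction hypothesis furnishes a $\overline{\calS_y}$-adapted vector $\overline z \in \overline V$ outside the projected union. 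The contradiction will come from lifting $\overline z$ to a $\calS$-adapted vector $z \in V$ outside $\bigcup V_i$.

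The main obstacle I anticipate is this lifting step. If $z \in V$ projects to $\overline z$, then any hypothetical trace-zero rank-one $w \in \calS$ with $\im w = \F z$ satisfies $w(y) = cz$ for some $c \in \F$; the sub-case $c=0$ is harmless since then $w \in \calS_y$ and its descent $\overline w$ has range $\F \overline z$ and trace zero, contradicting adaptedness of $\overline z$. The difficulty lies in the sub-case $c \neq 0$, where $w$ does not preserve $\F y$ and is not ruled out by the inductive conclusion. The resolution should mimic the base case inside the invariant plane $\F y + \F z$: combining $w$ with a suitably chosen $u_x$ (for some $x \notin \bigcup V_i$) and computing the coefficient $c_2$ of the pencil $w + t u_x$, the NRC property should force a non-split characteristic polynomial for a well-chosen $t$. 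The remaining auxiliary issues (genericity of the eigenvector $y$ relative to $V_1 \cup \cdots \cup V_{n-1}$, and of $x$ relative to $\bigcup V_i$) are standard dimension counts, and this is where the precise dimension profile of the $2$-complex is used to verify that enough room remains after each projection.
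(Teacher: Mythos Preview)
Your approach is genuinely different from the paper's. The paper does \emph{not} quotient by a line. Instead it restricts to carefully chosen hyperplanes $H$: for each such $H$ it applies the induction hypothesis to the space of endomorphisms of $H$ induced by $\{u \in \calS : \im u \subseteq H\}$, and thereby shows that the space $\calS_H$ of trace-zero operators in $\calS$ vanishing on $H$ has dimension $> \lfloor n/2 \rfloor$. Summing over a basis $(\Ker \varphi_1,\dots,\Ker \varphi_n)$ of hyperplanes produces a subspace of $\calS$ spanned by rank-one trace-zero operators and of dimension $> n^2/2$, contradicting the trace-orthogonality bound (Corollary~\ref{majdim1star}). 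So the paper never needs to lift an adapted vector from a quotient.

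The gap in your plan is the hard case $c \neq 0$. You propose to combine $w$ with ``a suitably chosen $u_x$ (for some $x \notin \bigcup V_i$)'' so that $c_2(w+tu_x)=-t\,\tr(wu_x)$ is nonzero for some $t$. Writing $w(v)=\beta(v)z$ and $u_x(v)=\alpha_x(v)x$, one has $\tr(wu_x)=\beta(x)\,\alpha_x(z)$. You can force $\beta(x)\neq 0$ by keeping $x$ off the hyperplane $\ker w$, but you have \emph{no control whatsoever} over $\ker u_x$: the witness $u_x$ is just some trace-zero rank-one operator with image $\F x$, and its kernel is an arbitrary hyperplane through $x$; nothing prevents $z\in\ker u_x$. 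Restricting $x$ to the plane $\F y+\F z$ does not help over small fields, since each $V_i$ can meet that plane in a line and you would have $n$ lines to avoid among only $|\F|+1$.

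There is, however, a clean repair entirely within your framework. Require also $y\notin V_n$ (you only demanded $y\notin V_i$ for $i<n$, but without this you cannot guarantee a lift of $\overline z$ avoids $V_n$); then at least $|\F|-1\geq 2$ lifts of $\overline z$ lie outside $\bigcup V_i$. Take $x$ to be a \emph{second} such lift $z'=z+sy$ with $s\neq 0$. If its witness $w'$ satisfies $w'(y)=0$, you are back in the easy case with $z'$ in place of $z$. Otherwise $w'(y)=c'z'$ with $c'\neq 0$, and since $w(z)=0$, $w(y)=cz$ one gets $w(z')=scz\neq 0$; symmetrically $w'(z)\neq 0$. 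Hence $\tr(ww')=\beta(z')\beta'(z)\neq 0$, and the pencil $w+tw'$ finishes the job. (Incidentally, the requirement that $y$ be an eigenvector is unnecessary: $\overline{\calS_y}$ is a weakly triangularizable subspace of $\End(V/\F y)$ for any $y$, and the induction hypothesis applies regardless of whether $\calS_y$ is large.)
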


Of course, Lemma \ref{goodlemma} yields the existence of an $\calS$-adapted vector under the stated assumptions.
Before we can prove Lemma \ref{goodlemma}, we need several basic lemmas.

\begin{lemma}\label{tracelemma}
Assume that $\F$ is an NRC field. Let $\calM$ be a weakly triangularizable subspace of $\Mat_n(\F)$, with $n \geq 2$, and $A$ and $B$ be two matrices in
$\calM$ such that $\rk A \leq 1$, $\rk B \leq 1$ and $\tr(A)=\tr(B)=0$. Then $\tr(AB)=0$.
\end{lemma}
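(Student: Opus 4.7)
The plan is to study the one-parameter family $A+tB$ for $t \in \F$, all of which lie in the weakly triangularizable space $\calM$. Since both $A$ and $B$ have rank at most $1$, the matrix $A+tB$ has rank at most $2$; in particular the algebraic multiplicity of $0$ as an eigenvalue of $A+tB$ is at least $n-2$, so the characteristic polynomial factors as
\[
\chi_{A+tB}(X) = X^{n-2}\,q_t(X)
\]
for some polynomial $q_t$ of degree $\leq 2$. Since $\tr(A+tB)=\tr(A)+t\tr(B)=0$, the coefficient of $X^{n-1}$ vanishes, hence $q_t(X)=X^2+\beta(t)$ for some $\beta(t) \in \F$, and identifying the coefficient of $X^{n-2}$ gives $\beta(t)=c_2(A+tB)$.

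The next step is to compute $\beta(t)$ explicitly via the polar form of the quadratic form $c_2$. I would use the expansion
\[
c_2(A+tB)=c_2(A)+t\,b_2(A,B)+t^2\,c_2(B).
\]
The individual terms $c_2(A)$ and $c_2(B)$ both vanish: indeed a rank-one trace-zero matrix $A$ has characteristic polynomial $X^n$ (since it equals $X^{n-1}(X-\tr A)$ for any rank-$\leq 1$ matrix), and the zero matrix trivially has $c_2=0$. Moreover $b_2(A,B)=\tr(A)\tr(B)-\tr(AB)=-\tr(AB)$. Therefore $\beta(t)=-t\,\tr(AB)$ and
\[
q_t(X)=X^2-t\,\tr(AB).
\]

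The last step is to exploit triangularizability. Since $A+tB \in \calM$ is triangularizable over $\F$, the polynomial $q_t$ must split over $\F$, which forces $t\,\tr(AB)$ to be a square in $\F$ for every $t \in \F$. If $\tr(AB)\neq 0$, then $t \mapsto t\,\tr(AB)$ is a bijection of $\F$, so every element of $\F$ would be a square, contradicting the hypothesis that $\F$ is NRC. Hence $\tr(AB)=0$.

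There is no real obstacle in this argument: the whole reduction rests on the rank-$\leq 2$ observation that collapses the characteristic polynomial to a quadratic factor, after which the NRC hypothesis kicks in via a one-parameter family. The only mild point of care is to ensure the argument goes through in characteristic $2$, which it does because $c_2$ and its polar form $b_2$ are defined without any division by $2$, and because the splitting of $X^2-t\,\tr(AB)$ over $\F$ is still equivalent to $t\,\tr(AB)$ being a square.
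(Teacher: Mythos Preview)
Your proof is correct and follows essentially the same route as the paper's: study the characteristic polynomial of $A+tB$, reduce via the rank bound and trace-zero condition to the quadratic factor $X^2-t\,\tr(AB)$, and conclude from the NRC hypothesis. The only difference is cosmetic---you spell out explicitly why $c_2(A)=c_2(B)=0$, whereas the paper leaves this implicit.
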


\begin{proof}
Let $\alpha \in \F \setminus \{0\}$.
As $\rk(A+\alpha\,B) \leq 2$ and $\tr(A+\alpha B)=0$, the characteristic polynomial of $A+\alpha\,B$ simplifies as
$$t^n+c_2(A+\alpha B)\,t^{n-2}=t^{n-2}\left(t^2+c_2(A+\alpha B)\right).$$
Moreover
$$c_2(A+\alpha B)=c_2(A)+\alpha^2 c_2(B)+\alpha (\tr(A)\tr(B)-\tr(AB))=-\alpha \tr(AB).$$
Hence $\alpha \tr(AB)$ is a square in $\F$ for all $\alpha \in \F$. And since $\F$ is an NRC field we deduce that $\tr(AB)=0$.
\end{proof}

\begin{cor}\label{majdim1star}
Assume that $\F$ is an NRC field.
Let $\calM$ be a weakly triangularizable subspace of $\Mat_n(\F)$ which is spanned by matrices of rank $1$ and trace $0$.
Then $\dim \calM \leq \dfrac{n^2}{2}\cdot$
\end{cor}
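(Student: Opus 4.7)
The plan is very short: reduce the bound to a totally isotropic subspace estimate for the trace form on $\Mat_n(\F)$.

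First, I would use the trace bilinear form
\[
\beta : (A,B) \in \Mat_n(\F)^2 \longmapsto \tr(AB),
\]
which is symmetric and nondegenerate on the $n^2$-dimensional space $\Mat_n(\F)$ (nondegeneracy: for any nonzero $A$ with $A_{i,j}\neq 0$, take $B=E_{j,i}$ to obtain $\beta(A,B)=A_{i,j}\neq 0$). Recall the general fact that any totally isotropic subspace $W$ for a nondegenerate symmetric bilinear form on a space of dimension $N$ satisfies $W \subseteq W^\bot$, hence $\dim W \leq N - \dim W$, yielding $\dim W \leq N/2$.

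Next, I would show that $\calM$ is totally isotropic for $\beta$. By assumption, there exists a family $(A_\lambda)_{\lambda \in \Lambda}$ of rank-$1$ trace-$0$ matrices in $\calM$ that spans $\calM$. For any two indices $\lambda,\mu$, Lemma \ref{tracelemma} directly gives $\beta(A_\lambda,A_\mu)=\tr(A_\lambda A_\mu)=0$, because the ambient field is NRC and the lemma applies to any two rank-$\leq 1$ trace-zero matrices in a weakly triangularizable space. By the bilinearity of $\beta$, this identity propagates from a spanning set of $\calM$ to all of $\calM$, so $\beta(A,B)=0$ for every $A,B \in \calM$.

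Combining the two previous steps with $N=n^2$ immediately yields $\dim \calM \leq n^2/2$.

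There is essentially no obstacle here: the only genuine content of the corollary lies in Lemma \ref{tracelemma}, which has already been proved. The corollary is the observation that this lemma forces any space $\calM$ generated by rank-$1$ trace-$0$ matrices to sit inside its own orthogonal for the trace form, and the dimension bound is then a standard consequence of the nondegeneracy of the trace form on $\Mat_n(\F)$.
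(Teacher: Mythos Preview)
Your proof is correct and follows exactly the same route as the paper: use Lemma \ref{tracelemma} on a spanning set of rank-$1$ trace-$0$ matrices to see that $\calM$ is totally isotropic for the trace form, then invoke the standard $N/2$ bound from nondegeneracy. The only difference is that you spell out the bilinearity-and-spanning-set reduction and the nondegeneracy of the trace form explicitly, whereas the paper leaves these as understood.
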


\begin{proof}
Indeed, Lemma \ref{tracelemma} shows that $\calM$ is totally singular for the non-degenerate symmetric bilinear form
$(A,B) \mapsto \tr(AB)$ on $\Mat_n(\F)$. Therefore, $\dim \calM \leq \frac{1}{2}\,\dim \Mat_n(\F)=\frac{n^2}{2}\cdot$
\end{proof}

We finally recall the following lemma from \cite{dSPfeweigenvalues} (see corollary 2.6 there):

\begin{lemma}\label{coveringlemma2}
Let $p$ be a positive integer such that $|\F|>p$.
Let $V$ be an $n$-dimensional vector space over $\F$, and $(V_i)_{i \in I}$
be a finite family of linear subspaces of $V$ in which $|I|=(n-1)p$ and, for
all $k \in \lcro 1,n-1\rcro$, exactly $p$ vector spaces in the family have dimension $k$.
Then there exists a basis of $V$ in which no vector belongs to $\underset{i \in I}{\cup} V_i$.
\end{lemma}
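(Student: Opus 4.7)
The plan is to prove the lemma by induction on $n$, with the base case $n=1$ being vacuous since $|I|=0$. For the inductive step, the strategy is to select a single vector $e_1 \notin \bigcup_i V_i$, then pass to the quotient $\overline{V}:=V/\F e_1$ in order to invoke the induction hypothesis on $\overline{V}$, and finally lift the resulting basis back to $V$ while performing a small adjustment to deal with the $p$ hyperplanes in the family.

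First I would find $e_1$ by a crude union-bound count: $|\bigcup_i V_i| \leq p \sum_{k=1}^{n-1} |\F|^k = p\,(|\F|^n - |\F|)/(|\F|-1)$, and one readily checks that this quantity is strictly less than $|\F|^n$ as soon as $|\F|>p$. Then the canonical projection $\pi : V \twoheadrightarrow \overline{V}$ is injective on every $V_i$ (because $e_1 \notin V_i$), so $\dim \pi(V_i) = \dim V_i$ throughout; in particular the $p$ hyperplane $V_i$'s are sent onto the whole of $\overline{V}$ and must be set aside, while the remaining $V_i$'s yield exactly $p$ proper subspaces of each dimension $k \in \lcro 1, n-2\rcro$ in $\overline{V}$. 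The induction hypothesis then produces a basis $(\overline{e}_2,\ldots,\overline{e}_n)$ of $\overline{V}$ that avoids $\pi(V_i)$ for every $V_i$ with $\dim V_i \leq n-2$.

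To conclude, I would take arbitrary lifts $e_2, \ldots, e_n \in V$: the tuple $(e_1,e_2,\ldots,e_n)$ is automatically a basis of $V$, and for $j \geq 2$ the vector $e_j$ avoids every $V_i$ of dimension $\leq n-2$. The only remaining issue -- and the main delicate point of the proof -- is that some $e_j$ could lie in one of the $p$ hyperplanes $V_i$. I would resolve this by replacing each $e_j$ with $e_j+\lambda_j e_1$ for a suitable scalar $\lambda_j \in \F$: this substitution preserves linear independence as well as the conditions already obtained, and for each hyperplane $V_i$ in the family the equation $e_j+\lambda e_1 \in V_i$ admits exactly one solution $\lambda \in \F$ (since $e_1 \notin V_i$). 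Thus at most $p$ values of $\lambda_j$ are forbidden, and the hypothesis $|\F|>p$ furnishes an admissible scalar. The main obstacle is precisely the presence of the hyperplanes, which do not survive the quotient step and therefore cannot be dealt with by the naive induction; the resolution is to exploit the one-parameter freedom in the choice of lifts, observing that each hyperplane rules out just one value of $\lambda_j$. This is why the hypothesis $|\F|>p$ is invoked twice in the argument -- first to produce $e_1$, then to perform each adjustment.
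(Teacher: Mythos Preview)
Your proof is correct. The paper does not actually prove this lemma: it merely recalls it from \cite{dSPfeweigenvalues} (corollary~2.6 there), so there is no in-paper argument to compare against. Your induction-and-quotient strategy, with the final one-parameter adjustment $e_j \mapsto e_j+\lambda_j e_1$ to handle the $p$ hyperplanes, is a clean and self-contained argument. The only cosmetic point is that your union-bound count for the existence of $e_1$ tacitly assumes $\F$ is finite (the inequality $p(|\F|^n-|\F|)/(|\F|-1)<|\F|^n$ follows from $|\F|-1\geq p$); for infinite $\F$ the existence of $e_1$ is immediate since a finite union of proper subspaces never covers $V$, and indeed the paper remarks that the whole lemma is obvious in that case.
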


Note that Lemma \ref{coveringlemma2} is obvious if $\F$ is infinite, as in that case
no finite union of proper linear subspaces covers the full space.

\subsection{Proof of Lemma \ref{goodlemma}}\label{goodlemmaproof}

We use an induction on $n$. Throughout, we assume that $\F$ is NRC. Assume first that $n=2$. Let $V$ be a $2$-dimensional vector space over $\F$, and
$\calS$ be a weakly triangularizable subspace of $\End(V)$.

Assume that there are linearly independent $\calS$-inadapted vectors $x$ and $y$.
Denote by $\calM$ the subspace of $\Mat_2(\F)$ associated with $\calS$ in the basis $(x,y)$.
Then, our assumptions show that $\calM$ contains the unit matrices $E_{1,2}$ and $E_{2,1}$.
But this would contradict Lemma \ref{tracelemma}. Hence the $\calS$-inadapted vectors are trapped inside a $1$-dimensional subspace of $V$.

Because $|\F|>2$ there are at least four $1$-dimensional subspaces of $V$, and hence the set of all
$\calS$-adapted vectors is not included in the union of two such subspaces.
Hence the case $n=2$ in Lemma \ref{goodlemma} is proved.

Let now $n \geq 3$, and assume that the result of Lemma \ref{goodlemma} holds for $n-1$.
Let $V$ be an $n$-dimensional vector space, and $\calS$ be a weakly triangularizable subspace of $\End(V)$.
We use a \emph{reductio ad absurdum} by assuming that there exists a $2$-complex $(V_1,\dots,V_n)$ of $V$
whose union contains every $\calS$-adapted vector.

\begin{claim}\label{gooddualbasis}
There exists a basis $(\varphi_1,\dots,\varphi_n)$
of the dual space $V^\star$ such that no $V_i$ is included in some $\Ker \varphi_j$.
\end{claim}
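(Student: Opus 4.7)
The plan is to dualize the containment: $V_i \subseteq \Ker \varphi_j$ is equivalent to $\varphi_j$ belonging to the annihilator $V_i^\circ \subseteq V^\star$, which is a proper linear subspace of codimension $\dim V_i$, i.e.\ of dimension $n-\dim V_i \in \lcro 1,n-1\rcro$ (all $V_i$ are nonzero and proper since $\dim V_i = \lfloor (i+1)/2 \rfloor$ lies between $1$ and $\lfloor (n+1)/2\rfloor <n$ for $n \geq 3$). So the claim amounts to exhibiting a basis of $V^\star$ none of whose vectors lies in the finite union $\bigcup_{i=1}^n V_i^\circ$. This is precisely the situation addressed by Lemma \ref{coveringlemma2}, applied in the dual space $V^\star$.

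Next I would analyze the dimension profile of the family $(V_i^\circ)_{1 \leq i \leq n}$. Because $\dim V_i$ takes each value $k \in \lcro 1,\lfloor(n+1)/2\rfloor\rcro$ either once (only $k=(n+1)/2$ when $n$ is odd) or twice, the dual dimensions $\dim V_i^\circ = n-\dim V_i$ yield, for each $k \in \lcro 1,n-1\rcro$, \emph{at most} two members of the family. To match the hypothesis of Lemma \ref{coveringlemma2} with $p=2$, I would enlarge the family by throwing in $2(n-1)-n = n-2$ additional proper subspaces of $V^\star$ chosen with prescribed dimensions, so that the augmented family has cardinality $2(n-1)$ and contains exactly two subspaces of each dimension $k \in \lcro 1,n-1\rcro$. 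Any union-avoiding basis for the enlarged family is in particular a union-avoiding basis for the original one.

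It only remains to check the cardinality hypothesis $|\F|>p=2$ required by Lemma \ref{coveringlemma2}. This is forced by the NRC assumption: over $\F_2$ the squaring map is the identity, hence bijective, so $\F_2$ is not NRC and therefore $|\F|\geq 3$. Lemma \ref{coveringlemma2} then directly yields a basis $(\varphi_1,\dots,\varphi_n)$ of $V^\star$ no vector of which lies in the union of the annihilators $V_i^\circ$, which is exactly the required claim.

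The only mildly delicate step is the bookkeeping of dimensions showing that the $V_i^\circ$'s can be completed to a family to which Lemma \ref{coveringlemma2} applies; everything else is a direct transcription through duality and an appeal to the NRC hypothesis to rule out $|\F|=2$.
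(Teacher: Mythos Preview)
Your proof is correct and follows the same approach as the paper: dualize to annihilators, note that $|\F|>2$ because $\F$ is NRC, and invoke Lemma~\ref{coveringlemma2}. You are in fact slightly more careful than the paper, which applies Lemma~\ref{coveringlemma2} directly to the family $(V_i^\circ)_{1\le i\le n}$ without spelling out the padding step you describe; your observation that one may harmlessly enlarge the family to meet the exact ``$p$ subspaces per dimension'' hypothesis makes the appeal to the lemma fully rigorous.
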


\begin{proof}
Set $W^o:=\{f \in V^\star : \forall x \in W, \; f(x)=0\}$ for any linear subspace $W$ of $V$.

As the subspaces $(V_1)^o,\dots,(V_n)^o$ have their dimensions respectively equal to
$n-1,n-1,n-2,n-2,\dots,n-\lfloor \frac{n+1}{2}\rfloor$, and as $|\F|>2$ (because $\F$ is NRC)
Lemma \ref{coveringlemma2} yields a basis of $V^\star$ made of vectors that belong to none of the spaces $(V_i)^o$. This
yields the claimed result.
\end{proof}

Given a (linear) hyperplane $H$ of $V$, we define the linear subspace
$$\calS_H:=\Bigl\{u \in \calS : \; \tr(u)=0 \quad \text{and} \quad \forall x \in H, \; u(x)=0\Bigr\}.$$

\begin{claim}\label{hyperplaneclaim}
Let $H$ be a linear hyperplane of $V$ that includes none of the spaces $V_1,\dots,V_n$.
Then $\dim \calS_H>\lfloor \frac{n}{2}\rfloor$.
\end{claim}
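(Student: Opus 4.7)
My plan is to argue by contradiction. Assume $\dim \calS_H \leq m := \lfloor n/2 \rfloor$; the goal is to build a $2$-complex of $H$ and a weakly triangularizable subspace of $\End(H)$ such that the $2$-complex covers all adapted vectors of that subspace, thereby contradicting the inductive hypothesis of Lemma~\ref{goodlemma} applied to $H$ (of dimension $n-1 \geq 2$).

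The first step is to reinterpret $\calS_H$. Fixing a linear form $\varphi \in V^\star$ with $\ker \varphi = H$, every nonzero element of $\calS_H$ has the form $v \mapsto \varphi(v)\, y$ with $y$ uniquely determined, and the trace-zero condition is equivalent to $y \in H$. Hence $\calS_H$ is in linear bijection with $W_H := \{y \in H : (v \mapsto \varphi(v)\, y) \in \calS\} \subseteq H$, of dimension at most $m$, and I enlarge $W_H$ to some subspace $W_H' \subseteq H$ of dimension exactly $m$. I then introduce the auxiliary subspace $\calT_H := \{u \in \calS : u(H) \subseteq H\}$ and its restriction $\calT_H|_H \subseteq \End(H)$; since the characteristic polynomial of $u|_H$ divides that of $u$, which splits over $\F$, the subspace $\calT_H|_H$ is weakly triangularizable.

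The second step is to assemble a $2$-complex of $H$. Because $V_i \not\subseteq H$ for each $i$, the intersection $V_i \cap H$ has dimension $\lfloor (i+1)/2 \rfloor - 1$; in particular $V_1 \cap H = V_2 \cap H = \{0\}$. Setting $W_i' := V_{i+2} \cap H$ for $i \in \lcro 1, n-2 \rcro$ and $W_{n-1}' := W_H'$, a direct check yields $\dim W_i' = \lfloor (i+1)/2 \rfloor$ for every index, so $(W_i')_{1 \leq i \leq n-1}$ is a $2$-complex of $H$.

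The main obstacle, and the heart of the argument, is to show that this $2$-complex covers every $\calT_H|_H$-adapted vector. Pick $y \in H$ outside $W_H' \cup \bigcup_{i \geq 3}(V_i \cap H)$. Then $y \notin \bigcup_i V_i$, so $y$ is not $\calS$-adapted, and some nonzero $u \in \calS$ of rank $1$ and trace $0$ has image $\F y$; write $u : v \mapsto \alpha(v)\, y$ with $\alpha \in V^\star$ and $\alpha(y) = 0$. A dichotomy on $\ker \alpha$ completes the argument: if $\ker \alpha = H$ then $\alpha$ is proportional to $\varphi$, forcing $y \in W_H \subseteq W_H'$, a contradiction; otherwise $u(V) = \F y \subseteq H$ shows $u \in \calT_H$, and then $u|_H : h \mapsto \alpha(h)\, y$ is a nonzero rank-$1$ trace-$0$ operator on $H$ whose image is $\F y$, witnessing that $y$ is not $\calT_H|_H$-adapted. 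This coverage contradicts the inductive hypothesis of Lemma~\ref{goodlemma} applied to $\calT_H|_H \subseteq \End(H)$, and hence $\dim \calS_H > m$.
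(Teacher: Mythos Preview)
Your proof is correct and follows essentially the same strategy as the paper's: both arguments hinge on applying the inductive hypothesis of Lemma~\ref{goodlemma} to a weakly triangularizable subspace of $\End(H)$ together with the $2$-complex built from the slices $V_i\cap H$ and one extra $m$-dimensional piece, and both exploit the dichotomy on whether the rank-$1$ witness for $\calS$-inadaptedness vanishes on $H$. The only cosmetic differences are that you run the argument in contrapositive form (assuming $\dim\calS_H\le m$ and identifying $\calS_H$ with the subspace $W_H\subseteq H$ coordinate-free, rather than extracting $p+1$ explicit matrix units $E_{i,n}$ as the paper does), and that you restrict the slightly larger space $\calT_H=\{u\in\calS:u(H)\subseteq H\}$ instead of the paper's $\calT=\{u\in\calS:\im u\subseteq H\}$; since the witness you produce in Case~2 actually lies in $\calT$ anyway, this choice is immaterial.
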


\begin{proof}
Set $p:=\lfloor \frac{n}{2}\rfloor$. Denote by $\calT$ the linear subspace of $\calS$ consisting of its elements with range included in $H$,
and by $\calT' \subseteq \End(H)$ the space of endomorphisms induced by the elements of $\calT$.
Then $\calT'$ is weakly triangularizable.
We contend that the set of all vectors of $H$ that are both $\calS$-inadapted and $\calT'$-adapted
has its rank greater than $p$.

Assume that this is not true.
Then some $p$-dimensional linear subspace $G$ of $H$
contains all the vectors of $H$ that are both $\calS$-inadapted and $\calT'$-adapted.
As $H$ includes none of $V_1,\dots,V_n$, one sees that $V_1 \cap H=V_2 \cap H=\{0\}$ and
that $\bigl(V_3 \cap H,\dots,V_n \cap H,G\bigr)$ is a $2$-complex of $H$.
By the induction hypothesis, at least one $\calT'$-adapted vector belongs to none of $G,V_1,V_2,\dots,V_n$.
Yet, such a vector must be $\calS$-adapted since it belongs to $H$ but not to $G$.
This is in contradiction with our initial assumption, which stated that
no vector outside of $V_1 \cup \cdots \cup V_n$ is $\calS$-adapted.

Hence we can find $p+1$ linearly independent vectors $x_1,\dots,x_{p+1}$ of $H$
that are all $\calS$-inadapted yet $\calT'$-adapted. Let us extend this family into a basis $(x_1,\dots,x_{n-1})$ of $H$,
and then into a basis $(x_1,\dots,x_n)$ of $V$. Consider the matrix space $\calM \subseteq \Mat_n(\F)$
associated with $\calS$ in that basis. Let $i \in \lcro 1,p+1\rcro$. Since $x_i$ is $\calS$-inadapted,
there is a non-zero matrix $A \in \calM$ with trace zero and all rows zero except the $i$-th one.
Write
$$A=\begin{bmatrix}
B & [?]_{(n-1) \times 1} \\
[0]_{1 \times (n-1)} & 0
\end{bmatrix} \quad \text{with $B \in \Mat_{n-1}(\F)$,}$$
and note that $B$ represents an element of $\calT'$ in the basis $(x_1,\dots,x_{n-1})$.
Note also that $\tr B=\tr A=0$ and that $B$ has all rows zero with the possible exception of the $i$-th. As $x_i$ is $\calT'$-adapted,
we conclude that $B=0$. It follows that $A$ is a non-zero scalar multiple of $E_{i,n}$.
Therefore, the linear subspace $\calM$ includes $\Vect(E_{1,n},\dots,E_{p+1,n})$. As $(x_1,\dots,x_{n-1})$ is a basis of $H$,
all the matrices of $\Vect(E_{1,n},\dots,E_{p+1,n})$ represent elements of $\calS_H$ in the basis $(x_1,\dots,x_n)$.
This yields $\dim \calS_H >p$.
\end{proof}

We are ready to conclude. Let us choose a basis $(\varphi_1,\dots,\varphi_n)$ of $V^\star$
that satisfies the conclusion of Claim \ref{gooddualbasis}. For $i \in \lcro 1,n\rcro$, set $\calS_i:=\calS_{\Ker \varphi_i}$.
Clearly, the linear subspaces $\calS_1,\dots,\calS_n$ of $\End(V)$ are linearly independent.
Finally, setting
$$\calS':=\calS_1 \oplus \cdots \oplus \calS_n,$$
we deduce from Claim \ref{hyperplaneclaim} that
$$\dim \calS' \geq n\,\Bigl(1+\Bigl\lfloor \frac{n}{2}\Bigr\rfloor\Bigr)>\frac{n^2}{2}\cdot$$
This contradicts Corollary \ref{majdim1star} since $\calS'$ is included in the weakly triangularizable space $\calS$
and is spanned by elements of rank $1$ and trace $0$.
We deduce that our initial assumption was false, i.e.\ the union of a $2$-complex of $V$ cannot contain
all the $\calS$-adapted vectors.

This finishes our proof of Lemma \ref{goodlemma} by induction. Proposition \ref{prop:goodprop} follows directly from it.
And, as explained in Section \ref{section:fromadaptedtomajo}, Theorem \ref{theo:dim} follows from Proposition \ref{prop:goodprop}
for NRC fields. For perfect infinite fields with characteristic $2$, we refer the reader to Section \ref{section:car2}.

\section{Optimal spaces for NRC fields: infinite fields}\label{section:optimal}

\subsection{The Erasure Lemma}

We start with a simple lemma, which will be used at key points in our proof of Theorem \ref{theo:optimalcarnot2}.
Note that it holds over any field that is not quadratically closed, and does not require that $\car(\F) \neq 2$.

\begin{lemma}[Erasure Lemma]\label{lemma:erasure1}
Assume that $\F$ is not quadratically closed.
Let $n \geq 1$ be an integer.
Let $N \in \Mat_n(\F)$ and $C \in \F^n$. Assume that, for every row
$R \in \Mat_{1,n}(\F)$ and every $a \in \F$, the matrix
$$\begin{bmatrix}
a & R \\
C & N
\end{bmatrix}$$
is triangularizable. Then $C=0$.
\end{lemma}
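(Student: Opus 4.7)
The plan is a proof by contradiction: assume $C\neq 0$ and exhibit particular values of $a$ and $R$ for which $\chi_M$ acquires an irreducible quadratic factor, contradicting the triangularizability of $M=\begin{bmatrix} a & R \\ C & N\end{bmatrix}$. The strategy uses the freedom in $R$ in two stages---first to normalize $N$, then to pry loose an arbitrary monic quadratic factor of $\chi_M$.

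First I would specialize to $R=0$, which gives $\chi_M(t)=(t-a)\chi_N(t)$; since this polynomial must split over $\F$, so does $\chi_N$, i.e.\ $N$ is triangularizable. Conjugating by a block-diagonal matrix $\mathrm{diag}(1,P)$ preserves the hypothesis ($R$ remains arbitrary while $C$ and $N$ simply transform through $P$), so I may assume $N$ is upper triangular with diagonal $\lambda_1,\dots,\lambda_n$, and $C\neq 0$. Letting $i:=\max\{j : C_j\neq 0\}$, the vanishing of the last $n-i$ coordinates of $C$ together with the upper-triangularity of $N$ forces $M$ to be block upper triangular with blocks of sizes $i+1$ and $n-i$. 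The bottom block is already upper triangular, so the hypothesis reduces to: the $(i+1)\times(i+1)$ top block $M_1=\begin{bmatrix} a & R' \\ C' & N'\end{bmatrix}$ is triangularizable for every $a\in\F$ and every $R'\in\F^i$, where $C'_i\neq 0$ and $N'$ is upper triangular with diagonal $\lambda_1,\dots,\lambda_i$.

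The key computation is the Schur-complement formula
$$\chi_{M_1}(t)\;=\;(t-a)\chi_{N'}(t)\;-\;R'\cdot\mathrm{adj}(tI-N')\cdot C'.$$
Specializing $R'=(0,\dots,0,r)$ and using the fact that $\mathrm{adj}(tI-N')$ is upper triangular with $(i,i)$-entry equal to $\chi_{N'}(t)/(t-\lambda_i)$, only the last component of $\mathrm{adj}(tI-N')\cdot C'$ survives, and the right-hand side factors as
$$\chi_{M_1}(t)\;=\;\frac{\chi_{N'}(t)}{t-\lambda_i}\,\bigl[(t-a)(t-\lambda_i)-r C'_i\bigr].$$
As $(a,r)$ ranges over $\F^2$, the coefficients $(a+\lambda_i,\,a\lambda_i - rC'_i)$ of the quadratic factor cover every pair in $\F^2$ because $C'_i\neq 0$. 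Hence this quadratic factor realises \emph{every} monic quadratic polynomial of $\F[t]$, and the hypothesis that $\F$ is not quadratically closed lets me pick an irreducible one, contradicting the triangularizability of $M_1$.

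The main obstacle is engineering the reduction so that a single free scalar in $R$ pairs with a nonzero coordinate of $C$ to give total control over \emph{both} coefficients of the quadratic factor; once $N$ is put in upper triangular form and one truncates to the coordinates $1,\dots,i$, the upper-triangularity of the adjugate does this automatically, and the argument goes through uniformly in every characteristic, as required.
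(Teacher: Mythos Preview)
Your proof is correct and takes a genuinely different route from the paper's. The paper argues as follows: assuming $C\neq 0$, it sets $M=\begin{bmatrix}0 & 0\\ C & N\end{bmatrix}$, then changes basis so that the first vectors form a cyclic chain $x,Mx,M^2x,\dots$, putting a companion block $C(p)$ of size $d\geq 2$ in the upper-left corner; it then invokes a separate lemma on regular Hessenberg matrices (Lemma~\ref{lemma:adaptationcyclic}) to show that, by varying the first row, one can realise \emph{every} monic polynomial of degree $d$ as the characteristic polynomial of that block, and in particular one that fails to split. Your approach instead first specializes $R=0$ to force $N$ to be triangularizable, conjugates to make $N$ upper triangular, truncates at the last nonzero coordinate of $C$, and then uses the adjugate/Schur-complement identity to factor $\chi_{M_1}$ explicitly and isolate a freely varying monic quadratic. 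Your argument is more self-contained (no auxiliary Hessenberg lemma needed) and produces the irreducible factor by a direct computation; the paper's argument is more structural and yields the stronger intermediate statement that all degree-$d$ characteristic polynomials are attained, at the cost of importing an extra lemma.
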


The proof of Lemma \ref{lemma:erasure1} involves the following classical result in the study of cyclic matrices
(see e.g.\ lemma 11 of \cite{dSPidempotentLC}). In it, remember that the trace of a monic polynomial of degree $d \geq 1$
is defined as the opposite of its coefficient on $t^{d-1}$.

\begin{lemma}\label{lemma:adaptationcyclic}
Let $M=(m_{i,j}) \in \Mat_n(\F)$ be a regular Hessenberg matrix, i.e.
$m_{i,j}= 0$ for all $i,j$ such that $i>j+1$, and $m_{j+1,j} \neq 0$ for all $j \in \lcro 1,n-1\rcro$.
Let $r(t) \in \F[t]$ be a monic polynomial of degree $n$ with $\tr (r)=\tr (M)$.
Then there exists a row matrix $R \in \Mat_{1,n-1}(\F)$ such that the matrix
$$M+\begin{bmatrix}
0 & R \\
[0]_{(n-1) \times 1} & [0]_{(n-1) \times (n-1)}
\end{bmatrix}$$
has characteristic polynomial $r(t)$.
\end{lemma}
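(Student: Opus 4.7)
The plan is to reduce the statement to a triangular linear system in the unknowns $r_2,\dots,r_n$. Let $v:=(0,r_2,\dots,r_n)^{T}$, so that the perturbation matrix in the statement is the rank-one matrix $e_1 v^{T}$. By the matrix determinant lemma applied to $\det(tI - M - e_1 v^{T})$ (equivalently, by cofactor expansion),
$$\chi_{M+e_1 v^{T}}(t) \;=\; \chi_M(t) - \sum_{i=2}^{n} r_i\, D_i(t),$$
where $D_i(t):=(-1)^{1+i}\det\bigl((tI-M)_{1,i}\bigr)$ is the $(1,i)$-cofactor of $tI-M$. Thus realizing $r(t)$ as a characteristic polynomial amounts to solving an affine equation in $R=(r_2,\dots,r_n)$ in the $(n-1)$-dimensional affine space of monic degree-$n$ polynomials sharing the trace of $\chi_M$.

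The heart of the proof is a structural claim on the cofactor polynomials: \emph{for every $i\in\lcro 2,n\rcro$, the polynomial $D_i(t)$ has degree exactly $n-i$ in $t$, with leading coefficient $\mu_i:=\prod_{j=1}^{i-1}m_{j+1,j}$, which is nonzero.} I would prove this by induction on $n$. Because $M$ is regular Hessenberg, column $1$ of $(tI-M)_{1,i}$ consists of a single nonzero entry $-m_{2,1}$ at its top and zeros below. Expanding along this column gives
$$\det\bigl((tI-M)_{1,i}\bigr) \;=\; -m_{2,1}\cdot\det\bigl((tI'-M')_{1,i-1}\bigr),$$
where $M'$ is the trailing $(n-1)$-by-$(n-1)$ principal submatrix of $M$, which is itself regular Hessenberg; the column index has been shifted from $i$ to $i-1$. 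The inductive hypothesis applied to $M'$ then yields the stated degree and leading coefficient, the factor $m_{2,1}$ absorbing the missing term in the product while a careful sign count $(-1)^{1+i}\cdot(-1)\cdot(-1)^{(i-1)+1}=1$ ensures that no stray minus sign remains.

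Granted the structural claim, I would finish as follows. The trace assumption $\tr(r)=\tr(M)$ says that the coefficient of $t^{n-1}$ in $r(t)$ equals that in $\chi_M(t)$, and this is consistent with the formula above because each $D_i(t)$ has degree at most $n-2$, so no choice of $R$ affects the trace. Equating coefficients of $t^{n-k}$ in the identity $\chi_M(t)-\sum_{i=2}^{n}r_i D_i(t)=r(t)$ for $k=2,3,\dots,n$ produces an $(n-1)\times(n-1)$ lower triangular linear system in $r_2,\dots,r_n$ whose diagonal entries are $\mu_2,\mu_3,\dots,\mu_n$. Since each $\mu_i$ is nonzero, this system is uniquely solvable, yielding the desired row $R$.

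The main obstacle I anticipate is the bookkeeping in the structural claim: identifying that $(tI-M)_{1,i}$ really reduces to an analogous cofactor of the trailing submatrix $M'$ with index $i-1$, and correctly tracking the sign $(-1)^{1+i}$ against the sign produced by the column expansion. Once the induction is set up cleanly, the final triangular-system step is purely formal.
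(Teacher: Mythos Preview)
Your argument is correct. The cofactor expansion along the first row yields exactly the formula $\chi_{M+e_1v^T}(t)=\chi_M(t)-\sum_{i=2}^n r_i D_i(t)$, and your inductive computation of $\deg D_i=n-i$ with leading coefficient $\mu_i=\prod_{j=1}^{i-1}m_{j+1,j}$ is sound (the sign bookkeeping checks out, and the base case $i=2$ reduces to $m_{2,1}\chi_{M''}(t)$ for the trailing $(n-2)$-block, which is monic of the right degree). The resulting triangular system with nonzero diagonal entries $\mu_2,\dots,\mu_n$ then gives a unique $R$, and the trace hypothesis is precisely what makes the $t^{n-1}$ coefficient consistent.

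As for comparison: the paper does not supply its own proof of this lemma but simply quotes it as a classical fact (referring to lemma~11 of \cite{dSPidempotentLC}). Your cofactor/triangular-system argument is the standard route to such results and is entirely self-contained, so nothing is missing.
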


\begin{proof}[Proof of Lemma \ref{lemma:erasure1}]
Assume on the contrary that $C \neq 0$.
Consider $M:=\begin{bmatrix}
0 & [0]_{1 \times n} \\
C & N
\end{bmatrix} \in \Mat_{n+1}(\F)$.
We change the basis so as not to change the first vector $x$, and
take a basis in which $e_1=x$, $e_2=Mx$, $e_3=M^2 x$ and so on, so that $(e_1,\dots,e_d)$ spans $\F[M]x$ for some $d \in \lcro 2,n\rcro$.
This change of basis does not affect that starting assumption, but now $M$ has the reduced shape
$$M=\begin{bmatrix}
C(p) & [?]_{d \times (n+1-d)} \\
[0]_{(n+1-d) \times d} & [?]_{(n+1-d) \times (n+1-d)}
\end{bmatrix}$$
where $C(p)$ denotes the companion matrix of some monic polynomial $p(t)$ with degree $d \geq 2$.
We deduce that for every row $R' \in \Mat_{1,d-1}(\F)$ and every $a \in \F$, the matrix
$$C(p)+\begin{bmatrix}
a & R' \\
[0]_{(d-1) \times 1} & [0]_{(d-1) \times (d-1)}
\end{bmatrix}$$
has its characteristic polynomial split over $\F$. By Lemma \ref{lemma:adaptationcyclic}, in varying $a$ and $R'$
the characteristic polynomials we obtain are exactly the monic polynomials of degree $d$ of $\F[t]$.
And because $\F$ is not quadratically closed one of these polynomials does not split over $\F$
(it suffices to take the product $t^{d-2} q(t)$, where $q \in \F[t]$ is monic with degree $2$ and has no root in $\F$).
Therefore, we have a contradiction, and we conclude that $C=0$.
\end{proof}

\subsection{Setting the proof up}

Throughout the present section, we let $\F$ be an infinite field.
We assume furthermore that $\F$ is NRC.
We let $V$ be an $n$-dimensional vector space over $\F$, and
$\calS$ be an irreducible optimal weakly triangularizable linear subspace of $\End(V)$.
By Theorem \ref{theo:dim} we know that $\dim \calS=\frac{n(n+1)}{2}\cdot$
Our aim is to prove that there exists a basis of $V$
in which the elements of $\calS$ are represented by the $n$-by-$n$ symmetric matrices over $\F$.
The case $n=1$ is obvious, so we assume that $n>1$ (note that the proof is inductive, but this will be apparent only at its very last step).

As explained earlier, a key part will be played by the trace-orthogonal complement
$$\calS^\bot:=\{u \in \End(V) : \forall v \in \calS, \; \tr(uv)=0\}$$
for the nondegenerate symmetric bilinear form $(u,v) \mapsto \tr(uv)$ on $\End(V)$.
And further, we consider the dual-operator space $\widehat{\calS^\bot}$ of $\calS^\bot$, made of all the evaluation mappings
$$\widehat{x} : u \in \calS^\bot \longmapsto u(x) \in V, \quad \text{with $x \in V$.}$$
Here is a key point, which connects the problem of $\calS$-adapted vectors to rank issues in the space $\widehat{\calS^\bot}$.

\begin{lemma}\label{lemma:adaptedtodualop}
Let $x \in V \setminus \{0\}$. Then
$$\rk \widehat{x}=n-\dim (\calS \cap \Hom(V,\F x)).$$
\end{lemma}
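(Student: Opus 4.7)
The plan is to compute $\rk \widehat{x}$ as $\dim \im \widehat{x}$ in $V$ and to identify the annihilator of $\im \widehat{x}$ inside $V^\star$ via the trace duality. First I would introduce, for $\varphi \in V^\star$, the rank-one operator $x \otimes \varphi \in \End(V)$ defined by $y \mapsto \varphi(y)\,x$. The key computation is the identity
\[
\varphi(u(x)) = \tr\bigl(u \circ (x \otimes \varphi)\bigr),
\]
which follows from the fact that $u \circ (x \otimes \varphi) = u(x) \otimes \varphi$ has trace $\varphi(u(x))$.

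With that in hand, for $\varphi \in V^\star$ the condition that $\varphi$ annihilates $\im \widehat{x}$ reads $\tr\bigl(u \circ (x \otimes \varphi)\bigr) = 0$ for every $u \in \calS^\bot$; by the nondegeneracy of the trace form, this is equivalent to $x \otimes \varphi \in (\calS^\bot)^\bot = \calS$. Because $x \neq 0$, the map $\varphi \in V^\star \mapsto x \otimes \varphi \in \Hom(V, \F x)$ is a linear isomorphism, so the annihilator of $\im \widehat{x}$ in $V^\star$ corresponds bijectively to $\calS \cap \Hom(V, \F x)$ and has dimension $\dim(\calS \cap \Hom(V, \F x))$.

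Finally I would conclude by the standard duality formula $\dim \im \widehat{x} = n - \dim (\im \widehat{x})^o$, where $(\cdot)^o$ denotes the annihilator in $V^\star$. This yields
\[
\rk \widehat{x} = \dim \im \widehat{x} = n - \dim\bigl(\calS \cap \Hom(V, \F x)\bigr).
\]

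There is no serious obstacle here; the only subtle point is recognizing that $\varphi(u(x))$ can be rewritten as a trace against a rank-one operator whose range is $\F x$, which is precisely what bridges the evaluation map $\widehat{x}$ on $\calS^\bot$ with the intersection $\calS \cap \Hom(V, \F x)$ via the trace pairing.
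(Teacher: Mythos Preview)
Your proof is correct. The computation $\varphi(u(x))=\tr\bigl(u\circ(x\otimes\varphi)\bigr)$ together with $(\calS^\bot)^\bot=\calS$ cleanly identifies the annihilator of $\im\widehat{x}$, and the conclusion follows.

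The paper takes a slightly different route: it derives this lemma as the special case $W=\F x$ of a more general identity (Lemma~\ref{lemma:orthocomplement}), namely
\[
\dim(\calT\cap\Hom(V,W))+\dim\{u_{|W}\mid u\in\calT^\bot\}=(\dim W)(\dim V)
\]
for an arbitrary subspace $W\subseteq V$ and an arbitrary subspace $\calT\subseteq\End(V)$. That general form is proved by a double-inequality trick (one orthogonality inequality for $W$, a second for $V/W$, summed and forced to equality via $\dim\calT+\dim\calT^\bot=n^2$). Your argument is more direct for the one-dimensional case, exploiting the isomorphism $\varphi\mapsto x\otimes\varphi$ between $V^\star$ and $\Hom(V,\F x)$; the paper's more general lemma, however, is genuinely needed later in Section~\ref{section:optimalAtkinson} and Section~\ref{section:car2} with $W$ of higher dimension, which is why the author chose that route.
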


This lemma is a special case of a more general result, which will be reused later in the article:

\begin{lemma}\label{lemma:orthocomplement}
Let $W$ be a linear subspace of $V$ and $\calT$ be an arbitrary linear subspace of $\End(V)$. Then
$$\dim (\calT \cap \Hom(V,W))+\dim \{u_{|W} \mid u \in \calT^\bot\}=(\dim W)\,(\dim V).$$
\end{lemma}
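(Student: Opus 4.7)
The plan is to read the identity as a rank-nullity statement for the restriction map
$$\rho : \calT^\bot \longrightarrow \Hom(W,V), \quad u \longmapsto u_{|W},$$
whose image is exactly $\{u_{|W} \mid u \in \calT^\bot\}$. Once this is framed correctly, the whole proof boils down to computing $\dim \Ker \rho$ by a standard duality argument on the nondegenerate symmetric bilinear form $(u,v) \mapsto \tr(uv)$ on $\End(V)$.

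The key preliminary step I would establish first is the identity
$$\Hom(V,W)^\bot = \bigl\{u \in \End(V) : u_{|W}=0\bigr\}$$
for the trace form. The inclusion from right to left is immediate, since for $v \in \Hom(V,W)$ and $u$ vanishing on $W$ the composition $uv$ is zero; and the two subspaces have the correct complementary dimensions $n \dim W$ and $n(n-\dim W)$ (where $n=\dim V$), which forces equality.

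With this in hand, $\Ker \rho = \calT^\bot \cap \Hom(V,W)^\bot = \bigl(\calT+\Hom(V,W)\bigr)^\bot$, so
$$\dim \Ker \rho = n^2-\dim\bigl(\calT+\Hom(V,W)\bigr)= n^2-\dim \calT - n\dim W + \dim\bigl(\calT \cap \Hom(V,W)\bigr).$$
Combined with $\dim \calT^\bot = n^2-\dim \calT$, rank-nullity applied to $\rho$ then yields
$$\dim \{u_{|W} \mid u \in \calT^\bot\} = \dim \calT^\bot - \dim \Ker \rho = n\dim W - \dim\bigl(\calT \cap \Hom(V,W)\bigr),$$
which is the desired identity.

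There is no real obstacle here: the entire proof is a bookkeeping exercise in duality, and the only substantive observation is the computation of the trace-orthogonal of $\Hom(V,W)$ in the first step. Everything else is a formal manipulation of dimensions, so I would keep the write-up short and linear.
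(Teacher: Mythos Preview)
Your argument is correct and in fact more direct than the paper's. You identify $\Hom(V,W)^\bot$ inside $\End(V)$ once and for all, and then everything reduces to a single rank-nullity computation for the restriction map $\rho$. The paper instead establishes two separate inequalities: it pairs $\calT \cap \Hom(V,W)$ against $\{u_{|W} : u \in \calT^\bot\}$ via the nondegenerate form on $\Hom(V,W)\times\Hom(W,V)$, and simultaneously pairs $\pi\calT$ against $\{\overline{u} : u \in \calT^\bot,\ u_{|W}=0\}$ via the form on $\Hom(V,V/W)\times\Hom(V/W,V)$; summing the two inequalities gives at most $(\dim V)^2$, but the rank theorem forces the sum to equal $(\dim V)^2$, so both inequalities are equalities. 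The paper's route therefore yields a companion identity for the quotient side as a byproduct, but for the lemma as stated your single-duality argument is the cleaner proof.
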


Lemma \ref{lemma:adaptedtodualop} is obtained from Lemma \ref{lemma:orthocomplement} by taking $W=\F x$ and by noting that
the range of $\widehat{x}$ is isomorphic to $\{u_{|\F x} \mid u \in \calT^\bot\}$.

\begin{proof}[Proof of Lemma \ref{lemma:orthocomplement}]
Denote by $\pi$ the canonical projection from $V$ onto $V/W$.
For all $v \in \calT \cap \Hom(V,W)$ and all $u \in \calT^\bot$, we note that
$v \circ u$ maps into $W$, and hence $0=\tr(v \circ u)=\tr((v \circ u)_{|W})=\tr(v \circ (u_{|W}))$.
Hence $\{u_{|W} \mid u \in \calT^\bot\}$ is right-orthogonal to $\calT \cap \Hom(V,W)$ for the non-degenerate bilinear form
$$(v,u) \in \Hom(V,W) \times \Hom(W,V) \longmapsto \tr(v\circ u),$$
to the effect that
\begin{equation}\label{eq:ortho1}
\dim (\calT \cap \Hom(V,W))+\dim \{u_{|W} \mid u \in \calT^\bot\} \leq (\dim W)\,(\dim V).
\end{equation}

Next, for all $v \in \calT$ and all $u \in \calT^\bot$ such that $u_{|W}=0$, we find that $v \circ u$ vanishes everywhere on $W$, and hence
$$0=\tr(v \circ u)=\tr((\pi \circ v) \circ \overline{u})$$
where $\overline{u} : V/W \rightarrow V$ denotes the linear mapping induced by $u$.
Hence $\pi \calT:=\{\pi \circ v \ mid v \in S\}$ is left-orthogonal to
$$\calT':=\{\overline{u} \mid u \in \calT^\bot \; \text{such that}\; u_{|W}=0\}$$
for the non-degenerate bilinear form
$$(v,u) \in \Hom(V,V/W) \times \Hom(V/W,V) \longmapsto \tr(v\circ u).$$
Hence
\begin{equation}\label{eq:ortho2}
\dim (\pi \calT)+\dim \calT' \leq (\dim (V/W))\,(\dim V).
\end{equation}
Summing \eqref{eq:ortho1} and \eqref{eq:ortho2} yields
$$\dim (\calT \cap \Hom(V,W))+\dim (\pi \calT)+\dim \{u_{|W} \mid u \in \calT^\bot\}+\dim \calT' \leq (\dim V)^2=\dim \End(V).$$
Yet by the rank theorem $\dim \calT=\dim (\calT \cap \Hom(V,W))+\dim (\pi \calT)$ and
$\dim \calT^\bot=\dim \{u_{|W} \mid u \in \calT^\bot\}+\dim \calT'$.
Hence there is equality in \eqref{eq:ortho1}, which completes the proof.
\end{proof}

Now, we move on to our first step.

\begin{claim}\label{claim:vectorx}
One has $\rk\widehat{x} <n$ for all $x \in V \setminus \{0\}$.
\end{claim}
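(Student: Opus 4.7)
The plan is to argue by contradiction. Assuming $\rk \widehat{x} = n$ for some $x \in V \setminus \{0\}$, I will produce a weakly triangularizable subspace of $\Mat_{n-1}(\F)$ of dimension $\binom{n}{2}+1$, in contradiction with $t_{n-1}(\F) = \binom{n}{2}$, which is Theorem \ref{theo:dim} applied to the NRC field $\F$. By Lemma \ref{lemma:adaptedtodualop}, the hypothesis amounts to $D_x := \calS \cap \Hom(V, \F x) = \{0\}$, i.e.\ $\calS$ contains no nonzero operator with range contained in $\F x$.

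My first step is to analyse $\calS_x := \{u \in \calS : u(x)=0\}$. Each such $u$ descends to $\overline{u} \in \End(V/\F x)$; writing its matrix in a basis $(x, e_2, \dots, e_n)$ as $\bigl[\begin{smallmatrix} 0 & R \\ 0 & N \end{smallmatrix}\bigr]$ gives $\chi_u = t\,\chi_N$, so $\overline{u}$ inherits triangularizability. The resulting linear map $\calS_x \to \End(V/\F x)$ has kernel inside $D_x = \{0\}$ and image weakly triangularizable, so $\dim \calS_x \leq t_{n-1}(\F) = \binom{n}{2}$. Rank-nullity applied to the evaluation $u \in \calS \mapsto u(x) \in V$ (with image $\calS x$) then forces
$$\binom{n+1}{2} = \dim \calS = \dim \calS_x + \dim \calS x \leq \binom{n}{2} + n = \binom{n+1}{2},$$
so $\calS x = V$.

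My second step upgrades this by one dimension. The preimage $\calS^{\sharp}$ of $\F x$ under the same evaluation has dimension $\binom{n+1}{2} - (n-1) = \binom{n}{2}+1$. In the same basis, every $u \in \calS^{\sharp}$ has matrix $\bigl[\begin{smallmatrix} a & R \\ 0 & N \end{smallmatrix}\bigr]$, and the assignment $u \mapsto N \in \Mat_{n-1}(\F)$ is injective, because its kernel once again lies in $D_x = \{0\}$. Since $\chi_u = (t-a)\,\chi_N$ and $u$ is triangularizable, $\chi_N$ must split, so the image is a weakly triangularizable subspace of $\Mat_{n-1}(\F)$ of dimension $\binom{n}{2}+1$, which contradicts Theorem \ref{theo:dim}.

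The main obstacle is conceptual rather than technical: the first bound $\dim \calS_x \leq \binom{n}{2}$ is tight and gives no contradiction on its own; one must exploit the \emph{forced} surjectivity of $u \mapsto u(x)$ coming from its saturation, together with the observation that the \emph{same} subspace $D_x$ controls both the kernel of $\calS_x \to \End(V/\F x)$ and the kernel of the map $u \mapsto N$ on $\calS^{\sharp}$, in order to gain the extra dimension needed. The argument does not use irreducibility of $\calS$, treats $n \geq 2$ uniformly, and handles the base case $n=2$ via the trivial identity $t_1(\F) = 1 = \binom{2}{2}$.
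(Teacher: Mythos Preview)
Your proof is correct and follows essentially the same route as the paper: both pass to $\calS^\sharp=\{u\in\calS : u(x)\in\F x\}$ (the paper's $\calS'$), map it injectively into $\End(V/\F x)$ via $u\mapsto\overline{u}$ using $D_x=\{0\}$, and invoke $t_{n-1}(\F)=\binom{n}{2}$ from Theorem~\ref{theo:dim}. Your Step~1 is an unnecessary detour, however: the paper simply bounds $\dim\calS\le (n-1)+\dim\calS'\le (n-1)+\binom{n}{2}=\binom{n+1}{2}-1$ directly by rank--nullity on $u\mapsto\pi(u(x))$, without first establishing $\calS x=V$; equivalently, your Step~2 only requires $\dim\calS^\sharp\ge\binom{n}{2}+1$, and that inequality already follows from rank--nullity without surjectivity.
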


\begin{proof}
Let $x \in V \setminus \{0\}$, and assume that $\rk \widehat{x}=n$.
Then by Lemma \ref{lemma:adaptedtodualop} the space $\calS$ contains no operator with range $\F x$.
Here (and this is slightly different from what we did in Section \ref{section:fromadaptedtomajo}), we
consider the standard projection $\pi : V \twoheadrightarrow V/\F x$,
the kernel $\calS'$ of
$$u \in \calS \mapsto \pi(u(x)) \in V/\F x,$$
and then the projection mapping
$$u \in \calS' \mapsto \overline{u} \in \End(V/\F x).$$
We have just seen that the latter is injective, and obviously its range is a weakly triangularizable subspace of $\End(V/\F x)$.
By Theorem \ref{theo:dim}, we deduce that $\dim \calS' \leq \dbinom{n}{2}$.
Hence
$$\dim \calS \leq \dim(V/\F x)+\dim \calS' \leq (n-1)+\dbinom{n}{2}=\dbinom{n+1}{2}-1,$$
thereby contradicting the optimality of $\calS$.
\end{proof}

\begin{claim}\label{claim:Sbotreduced}
The space $\calS^\bot$ is reduced.
\end{claim}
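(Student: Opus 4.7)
The plan is to handle source-reducedness and target-reducedness separately, and in each case derive a contradiction with the irreducibility of $\calS$ by combining trace-duality with the Erasure Lemma (Lemma~\ref{lemma:erasure1}). The common mechanism is that failure of reducedness forces a ``large'' standard subspace to lie inside $\calS$, and after choosing an appropriate basis every element of $\calS$ acquires a block shape to which the Erasure Lemma directly applies, locking each element of $\calS$ inside a common proper invariant subspace.

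For source-reducedness, I assume that some $x \in V \setminus \{0\}$ satisfies $u(x)=0$ for all $u \in \calS^\bot$, that is $\widehat{x}=0$. Lemma~\ref{lemma:adaptedtodualop} gives $\dim(\calS \cap \Hom(V,\F x))=n$, hence $\Hom(V,\F x) \subseteq \calS$. I complete $x$ into a basis $(x=e_1,e_2,\dots,e_n)$ of $V$; in the attached matrix representation, $\Hom(V,\F x)$ is the subspace of matrices whose last $n-1$ rows vanish. Writing an arbitrary element of $\calS$ in block form as $\begin{bmatrix} a & R \\ C & N\end{bmatrix}$, we deduce that $\begin{bmatrix} a' & R' \\ C & N\end{bmatrix}\in \calS$ for every $a'\in\F$ and every row $R'$, and is therefore triangularizable. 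The Erasure Lemma then forces $C=0$, so the line $\F x$ is stabilised by every element of $\calS$, contradicting irreducibility.

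For target-reducedness, I assume that a hyperplane $H$ of $V$ satisfies $u(V) \subseteq H$ for all $u \in \calS^\bot$, i.e.\ $\calS^\bot \subseteq \Hom(V,H)$. Trace-duality yields $\Hom(V,H)^\bot \subseteq \calS$, and a direct matrix computation identifies $\Hom(V,H)^\bot$ with $\{u \in \End(V) : u_{|H}=0\}$. Picking $e_1 \in V \setminus H$ and a basis $(e_2,\dots,e_n)$ of $H$, this space corresponds in matrix form to matrices whose last $n-1$ columns vanish. Writing an arbitrary element of $\calS$ in block form as $\begin{bmatrix} a & R \\ C & N\end{bmatrix}$, we obtain that $\begin{bmatrix} a' & R \\ C' & N\end{bmatrix}$ is triangularizable for every $a' \in \F$ and every $C' \in \F^{n-1}$. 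Transposing (triangularizability is preserved by transposition) and applying the Erasure Lemma then forces $R=0$, so $H$ is invariant under every element of $\calS$, again contradicting irreducibility.

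The main obstacle is the target-reducedness step: the Erasure Lemma is asymmetric and only handles the ``column'' direction, so one has to move to the transpose in order to exploit it. The source-reducedness step, on the other hand, is essentially immediate once Lemma~\ref{lemma:adaptedtodualop} is in hand and $\Hom(V,\F x)$ is written out in a basis starting at $x$.
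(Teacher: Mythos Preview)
Your proof is correct and follows essentially the same approach as the paper: both halves reduce to the Erasure Lemma after observing that failure of reducedness forces $\Hom(V,\F x)$ (respectively the space of operators vanishing on $H$) to lie inside $\calS$. The only stylistic difference is in the target-reduced step: the paper transposes the entire operator space $\calS$ (passing to $\calS^t \subseteq \End(V^\star)$, which is again irreducible, optimal, and weakly triangularizable) and reapplies the source-reduced argument verbatim, whereas you work out the dual situation by hand and transpose only at the matrix level before invoking the Erasure Lemma; both routes are equivalent.
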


\begin{proof}
Assume first that there is a nonzero vector $x \in V$ in the kernel of all the elements of $\calS^\bot$.
Hence $\calS=(\calS^\bot)^\bot$ includes $\Hom(V,\F x)$
(by Lemma \ref{lemma:orthocomplement} applied to $W=\F x$).

Let us extend $x$ into a basis $(x,e_2,\dots,e_n)$ of $V$, and then consider the matrix space $\calM$
associated with $\calS$ in that basis.
Then $\calM$ contains every matrix of the form
$\begin{bmatrix}
a & R \\
[0]_{(n-1) \times 1} & [0]_{(n-1) \times (n-1)}
\end{bmatrix}$ with $a \in \F$ and $R \in \Mat_{1,n-1}(\F)$.
By the Erasure Lemma,
every matrix of $\calM$ then reads
$\begin{bmatrix}
? & [?]_{1 \times (n-1)} \\
[0]_{(n-1) \times 1} & [?]_{(n-1) \times (n-1)}
\end{bmatrix}$, which means that $\F x$ is invariant under all the elements of $\calS$.
This contradicts the irreducibility of $\calS$.

We have just proved that $\calS^\bot$ is source-reduced. To obtain that $\calS^\bot$ is also target-reduced, we
simply apply the first step to the transposed operator space $\calS^t :=\{f \in V^\star \mapsto f \circ u \mid u \in \calS\} \subseteq \End(V^\star)$, which is
still an irreducible optimal weakly triangularizable subspace.
\end{proof}

\subsection{Applying Atkinson's theorem}\label{section:optimalAtkinson}

At this point, we have extracted as much information as we could from $\calS$. Now our analysis moves to $\calS^\bot$, through its dual operator space
$\widehat{\calS^\bot}$.
As seen in Claim \ref{claim:vectorx}, every element of $\widehat{\calS^\bot}$ has rank at most $n-1$.
Moreover, since $\calS^\bot$ is target-reduced it is clear that $\widehat{\calS^\bot}$ is also target-reduced.
And finally $\widehat{\calS^\bot}$ is source-reduced (as is every dual operator space). Hence $\widehat{\calS^\bot}$ is reduced.
Beware however that $\widehat{\calS^\bot}$ might not be target-semiprimitive.
But if it is we will be helped by the following version of Atkinson's theorem \cite{Atkinson} on target-semiprimitive spaces of bounded rank operators:

\begin{theo}[Atkinson's theorem]\label{theo:Atkinson}
Let $\calT$ be a vector space of linear operators between finite-dimensional vector spaces $U_1$ to $U_2$
such that $\rk f<\dim U_2$ for all $f \in \calT$. Set $n:=\dim U_2$.
Assume that $\calT$ is target-semiprimitive and that $|\F| \geq n$.
Then:
\begin{enumerate}[(a)]
\item $\dim U_1 \leq \dbinom{n}{2}$.
\item If an addition $\dim U_1>\dbinom{n-1}{2}+1$,
then there exists an \textbf{alternator pair} for $\calT$, i.e.\ a pair $(b,\varphi)$ consisting of a nondegenerate
bilinear form $b : U_2 \times U_2 \rightarrow \F$ and of a vector space isomorphism
$\varphi : \calT \overset{\simeq}{\longrightarrow} U_2$ such that
$$\forall x \in U_1, \; \forall f \in \calT, \; b(\varphi(f),f(x))=0;$$
moreover $\maxrk (\calT)=n-1$ in that case.
\end{enumerate}
\end{theo}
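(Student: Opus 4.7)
The plan is to recognize this as Atkinson's classical theorem \cite{Atkinson} on primitive bounded-rank spaces of operators, formulated in the target-semi\-primitive language of this paper, and to reduce it to the analogous source-semiprimitive statement (which is what \cite{dSPLLD2} provides) by transposition.

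First I would pass to the transposed operator space
$$\calT^t:=\{f^\star \mid f \in \calT\} \subseteq \Hom(U_2^\star, U_1^\star).$$
Since rank is transpose-invariant, every $g \in \calT^t$ still has rank less than $n=\dim U_2^\star$. The annihilator correspondence $D \leftrightarrow D^o$ is a bijection between lines of $U_2$ and hyperplanes of $U_2^\star$, and a direct computation shows $(\calT \modu D)^t = (\calT^t)_{|D^o}$, so these two operations have the same maximum rank. A routine check on the reducedness conditions (source-reducedness of $\calT$ corresponds to target-reducedness of $\calT^t$ and vice versa) then yields that $\calT$ is target-semi\-primitive if and only if $\calT^t$ is source-semi\-primitive, reducing us to the classical setting.

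For part (a), I would invoke the source-semi\-primitive Atkinson bound: a source-semi\-primitive subspace $\calV \subseteq \Hom(U,W)$ whose elements all have rank less than $\dim U$ satisfies $\dim W \leq \binom{\dim U}{2}$ provided $|\F| \geq \dim U$. Applied to $\calT^t$, this gives $\dim U_1 = \dim U_1^\star \leq \binom{n}{2}$ at once.

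For part (b), the near-equality refinement of the same theorem produces, under the hypothesis $\dim U_1 > \binom{n-1}{2}+1$, an isomorphism $\varphi' : \calT^t \overset{\simeq}{\longrightarrow} U_2^\star$ such that $g(\varphi'(g))=0$ for every $g \in \calT^t$, together with $\maxrk(\calT^t)=n-1$. To translate this back into the target-form alternator pair demanded by the statement, I would choose any nondegenerate bilinear form $b$ on $U_2$, let $\widetilde{b} : U_2 \to U_2^\star$, $y \mapsto b(y,\cdot)$, and define $\varphi(f):=\widetilde{b}^{-1}(\varphi'(f^\star))$ for $f \in \calT$. Then $\varphi : \calT \to U_2$ is an isomorphism, and for every $x \in U_1$ and $f \in \calT$,
$$b(\varphi(f),f(x))=\varphi'(f^\star)(f(x))=\bigl(f^\star(\varphi'(f^\star))\bigr)(x)=0,$$
so $(b,\varphi)$ is the required alternator pair; rank invariance under transposition gives $\maxrk(\calT)=n-1$. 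The substantial obstacle is of course the source-semi\-primitive Atkinson theorem itself, whose proof uses a delicate generic-position argument over $\F$ (explaining the cardinality hypothesis $|\F| \geq n$); everything else is routine duality bookkeeping.
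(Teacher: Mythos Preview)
Your proposal is correct and follows essentially the same route as the paper: transpose $\calT$ to land in the source-semiprimitive setting, invoke the result from \cite{dSPLLD2}, and pull back via an arbitrary nondegenerate bilinear form on $U_2$. The only cosmetic difference is that the paper states the output of \cite{dSPLLD2} as an alternating map $\Phi : U_2^\star \wedge U_2^\star \to U_1^\star$ (with $\Psi : g \mapsto \Phi(g \wedge -)$ the relevant isomorphism), whereas you extract directly the consequence $g(\varphi'(g))=0$; these are equivalent packagings of the same conclusion.
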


This theorem is a transposed version of theorem 5.3 of \cite{dSPLLD2}. Beware that Atkinson's formulation in \cite{Atkinson}
is also a transposed analogue of this theorem, due to the fact that Atkinson sees matrices as acting on rows rather than on columns
(so, in his article the number of rows corresponds to the dimension of the source space).
Some explanation is then needed regarding the conclusion of point (b). To obtain it, we apply the formulation from \cite{dSPLLD2}
to the transposed operator space $\calT^t:=\{f^t \mid f \in \calT\} \subseteq \Hom(U_2^\star,U_1^\star)$, thereby recovering
a linear mapping $\Phi : U_2^\star \wedge U_2^\star \rightarrow U_1^\star$ such that $\Psi : g \in U_2^\star \mapsto \Phi(g \wedge (-))$
is an isomorphism from $U_2^\star$ to $\calT^t$. Then, we choose an arbitrary nondegenerate bilinear form
$b : U_2 \times U_2 \rightarrow \F$, allowing us to identify $U_2^\star$ with $U_2$ through $\Theta : z \in U_2 \mapsto b(z,-) \in U_2^\star$, and
we set $\varphi(f):=\Theta^{-1} (\Psi^{-1} (f^t))$ for $f \in \calT$, thereby recovering a vector space isomorphism
$\varphi : \calT \overset{\simeq}{\longrightarrow} U_2$. We shall now prove the identity
$$\forall x \in U_1, \; \forall f \in \calT, \; b(\varphi(f),f(x))=0.$$
For this, let $f \in \calT$. Then $z:=\varphi(f)$ is the unique vector of $U_2$ such that $\Psi(b(z,-))=f^t$.
Hence $\Phi(b(z,-) \wedge e)=f^t(e)=e \circ f$ for all $e \in U_2^\star$. This applies in particular to $e=b(z,-)$,
yielding $b(z,f(x))=0$ for all $x \in U_1$, as claimed.

It is important to stress that the bilinear form $b$ has no special property besides being nondegenerate (in fact, by changing the isomorphism $\varphi$
is is clear that we can take an arbitrary nondegenerate bilinear form on $U_2$).

Let us now apply this theorem to $\calT:=\widehat{\calS^\bot}$, knowing that here $U_1:=\calS^\bot$ and $U_2=V$, which have respective dimensions
$\dbinom{n}{2}$ and $n$. Note that point (b) can be applied only if $\dbinom{n}{2}>\dbinom{n-1}{2}+1$, i.e.\ $n>2$.

\noindent \textbf{Case 1: $\calT$ is target-semiprimitive, or $n=2$}. \\
The conclusion is easy in that case.
Assume first that $n>2$. Then by Atkinson's theorem we can find an alternator pair $(b,\varphi)$ for
$\widehat{\calS^\bot}$. Then
$$\forall u \in \calS^\bot, \; \forall x \in V, \;  b(\varphi(\widehat{x}),u(x))=0.$$
Hence the bilinear form $c : (x,y)\in V^2 \mapsto b(\varphi(\widehat{x}),y)$, which is clearly nondegenerate, is an \emph{alternator} for $\calS^\bot$,
meaning that every $u \in \calS^\bot$ is $c$-alternating (i.e.\ $c(x,u(x))=0$ for all $x \in V$).

Assume now that $n=2$. Then $\dim \calS^\bot=1$, and we take an arbitrary $u_0 \in \calS^\bot \setminus \{0\}$. Since $\calS^\bot$
is reduced we see that $u_0$ is an automorphism of $V$.
We take an arbitrary nondegenerate alternating bilinear form $b$ on $V$ (which is possible because $n$ is even).
Since $u_0$ is an automorphism the bilinear form $c : (x,y) \mapsto b(x,u_0^{-1}(y))$ is nondegenerate, and as $b$ is alternating
we find that $c$ is alternator for $\calS^\bot$.

Hence, in any case we have found a nondegenerate alternator $c$ for $\calS^\bot$.
Since $\calS^\bot$ has dimension $\dbinom{n}{2}$ it must be equal to the space $\calA_c$ of all $c$-alternating endomorphisms of $V$.

Now, consider a matrix space $\calM$ associated with $\calS$ in some basis of $V$, and the matrix $P \in \GL_n(\F)$ that represents the bilinear form $c$
in that basis (i.e.\ its Gram matrix). By the above
$\calM^\bot=P^{-1}\Mata_n(\F)$, and consequently
$$\calM=\Mats_n(\F)P.$$

Now, note that $I_n \in \calM$ because clearly $\F I_n+\calM$ is weakly triangularizable, and
$\calM$ is optimal. Hence $P^{-1}$ is symmetric, i.e.\ $P$ is symmetric.

Next, for all $R \in \GL_n(\F)$ we note that
$$\Mats_n(\F)R^T PR=R^{-1}(R\Mats_n(\F)R^T) PR=R^{-1}(\Mats_n(\F)P)R.$$
Hence replacing $P$ with a congruent matrix simply replaces $\calM$ with a similar matrix space.

At this point, we must remember that $\F$ is allowed to have characteristic $2$, and split the discussion into two cases.
Either $P$ is alternating (and $\car(\F)=2$) or $P$ is congruent to a diagonal matrix
(see e.g.\ theorem 3.0.13 in \cite{dSPquadform}).

Assume first that $P$ is congruent to a diagonal matrix. Without loss of generality we can assume that $P$ is diagonal.
Denote then by $p_1,\dots,p_n$ the diagonal entries of $P$.
Let $i \in \lcro 2,n\rcro$. Then $(E_{1,i}+E_{i,1}) P= p_i E_{1,i}+ p_1 E_{i,1}$ is triangularizable, with characteristic polynomial
$t^{n-2}(t^2-p_1p_i)$. Hence $p_1p_i$ is a square in $\F$, and therefore $p_i=\alpha_i^2 p_1$ for some $\alpha_i \in \F^\times$.
We conclude that $P$ is congruent to $p_1 I_n$, and hence $\calS$ is represented by $p_1 \Mats_n(\F)=\Mats_n(\F)$ in some basis of $V$.

Assume finally that $P$ is alternating and $\car(\F)=2$. We will show that this leads to a contradiction.
First of all $n=2m$ for some $m \geq 1$, and by replacing $P$ with a congruent matrix we can assume that
$$P=\begin{bmatrix}
[0]_{m \times m} & I_m \\
-I_m & [0]_{m \times m}
\end{bmatrix}.$$
\begin{itemize}
\item Assume first that $m>1$. Then, for all $A \in \Mat_m(\F)$, the matrix
$$M=\begin{bmatrix}
A & [0]_{m \times m} \\
[0]_{m \times m} & -A^T
\end{bmatrix}$$
is such that $MP^{-1}$ is symmetric, and hence by the above $M$ must be triangularizable over $\F$.
It would follow that every matrix $A \in \Mat_m(\F)$ is triangularizable over $\F$: this is absurd because $m \geq 2$ and
$\F$ is not quadratically closed.
\item Assume finally that $m=1$. Then, for all $\lambda \in \F$ the matrix
$\begin{bmatrix}
0 & 1 \\
\lambda & 0
\end{bmatrix}$ must belong to $\calM$, and hence its characteristic polynomial $t^2-\lambda$ splits over $\F$. This contradicts the assumption that $\F$ is NRC.
\end{itemize}

The proof is complete in the target-semiprimitive case.

\vskip 3mm
\noindent \textbf{Case 2: $\calT$ is not target-semiprimitive}. \\
Then, we take a maximal subspace $\{0\} \subsetneq W\subsetneq V$ such that, for the canonical projection $\pi : V \twoheadrightarrow V/W$, all
the composite operators $\pi\widehat{x}$ have rank at most $\maxrk (\calT)-\dim W$, and in particular are nonsurjective.
We will ultimately show that $W$ is $\calS$-invariant, but it is far from obvious at this point.

The resulting operator space $\pi \calT$ remains target-reduced because $\calT$ is target-reduced, but it might not be source-reduced.
So we introduce the intersection $\calH$ of all kernels of the operators in $\pi \calT$, and we take a direct factor $\calH'$
of $\calH$ in $\calS^\bot$, so that
$$\calH \oplus \calH'=\calS^\bot.$$
The resulting operator space $\pi \calT_{|\calH'}$ is now target-semiprimitive, owing to the maximality of $W$, so
we can apply the first conclusion of Atkinson's theorem to derive that
$$\dim \calH' \leq \dbinom{s}{2}, \quad \text{where} \;  s:=\dim(V/W).$$
Now, we look at how this plays out in $\calS$ and $\calS^\bot$.
We readily have
$$\dim \calH =\dim \calS^\bot-\dim \calH' \geq \dbinom{n-s}{2}+s(n-s).$$
Moreover, the very definition of $\calH$ shows that all its operators have their range included in $W$.
By Lemma \ref{lemma:orthocomplement} applied to $\calS^\bot$ (using $\calS^{\bot\bot}=\calS$), this shows that the operator space
$$\calS_W:=\{u_{|W} \mid u \in \calS\} \subseteq \Hom(W,V)$$
satisfies
$$\dim \calS_W \leq \dim(\Hom(W,V))-\dim \calH \leq \dbinom{n-s+1}{2}.$$
This prompts us to consider the kernel
$$\calS':=\{u \in \calS : \; u_{|W}=0\}.$$
By the rank theorem
$$\dim \calS' =\dim \calS-\dim \calS_W \geq \dim \calS-\dbinom{n-s+1}{2}=s(n-s)+\dbinom{s+1}{2}.$$
Now, we take a basis $\bfB$ of $V$ in which the first $n-s$ vectors span $W$,
and we consider the matrix spaces $\calM$ and $\calM'$ that correspond to $\calS$ and $\calS'$, respectively, in those bases.
Hence every $u \in \calS'$ has its representing matrix, in $\bfB$, of the form
$$M(u)=\begin{bmatrix}
[0]_{(n-s) \times (n-s)} & C(u) \\
[0]_{s \times (n-s)} & D(u)
\end{bmatrix} \quad \text{where $D(u) \in \Mat_s(\F)$ and $C(u) \in \Mat_{n-s,s}(\F)$.}$$
Then $D(\calS')$ is a weakly triangularizable subspace of $\Mat_s(\F)$, and hence by Theorem \ref{theo:dim}:
$$\dim D(\calS') \leq \dbinom{s+1}{2}.$$
As trivially $\dim C(\calS') \leq s(n-s)$, we obtain
$$\dim \calS' \leq \dim C(\calS')+\dim D(\calS') \leq s(n-s)+\dbinom{s+1}{2}.$$
Since we knew that $\dim \calS'\geq \dbinom{s+1}{2}+s(n-s)$, we derive that
$\dim D(\calS')=\dbinom{s+1}{2}$ and that
$\calM'$ contains every matrix of the form
$$\begin{bmatrix}
[0]_{(n-s) \times (n-s)} & C' \\
[0]_{s \times (n-s)} & D'
\end{bmatrix} \quad \text{with $C' \in \Mat_{n-s,s}(\F)$ and $D' \in D(\calS')$.}$$
The conclusion is nearby.

First of all, we take an arbitrary matrix $M \in \calM$. By adding to $M$ an appropriate matrix of the previous kind, we
obtain a new matrix
$$M'=\begin{bmatrix}
[?]_{(n-s) \times (n-s)} & [0]_{(n-s) \times s} \\
[?]_{s \times (n-s)} & D_1
\end{bmatrix} \in \calM, \qquad \text{where $D_1 \in \Mat_s(\F).$}$$
Since $\lambda M'+M''$ is triangularizable for all $\lambda \in \F$ and all $M'' \in \calM'$, we find that
$\lambda D_1+D'$ is triangularizable for all $\lambda \in \F$ and all $D' \in D(\calS')$. However $\dim D(\calS')=t_s(\F)$,
and hence $D_1 \in D(\calS')$. Therefore every matrix of $\calM$ has its lower-right block in $D(\calS')$.

It is now time to apply the inductive assumption to $D(\calS')$. This assumption, combined with Theorem \ref{theo:generaldecomposition},
yields a nontrivial partition $s=d_1+\dots+d_p$ such that
$D(\calS')$ is conjugated to $\Mats_{d_1}(\F) \vee \cdots \vee \Mats_{d_p}(\F)$.
Hence, by modifying the last $s$ vectors of the basis $\bfB$ we can assume that
$$D(\calS')=\Mats_{d_1}(\F) \vee \cdots \vee \Mats_{d_p}(\F).$$
In particular -- and this is the only feature we will use from the structure of $D(\calS')$ -- we now have that
$D(\calS')$ contains the unit matrix $E_{i,i}$ for all $i \in \lcro 1,s\rcro$.

Finally, take $M$ in $\calM$. Then, for some $N \in \calM'$, we have
$$M-N=\begin{bmatrix}
A & [0]_{(n-s) \times s} \\
B & [0]_{s \times s}
\end{bmatrix} \qquad \text{where $A \in \Mat_{n-s}(\F)$ and $B \in \Mat_{s,n-s}(\F)$.}$$
Now, let $C \in \F^{n-s}$ and $a \in \F$. Writing
$B=\begin{bmatrix}
R_1 \\
B_1
\end{bmatrix}$ with $R_1 \in \Mat_{1,n-s}(\F)$ and $B_1 \in \Mat_{s-1,n-s}(\F)$, we use the above shape of $\calM'$
and the fact that $E_{1,1}$ belongs to $D(\calS')$ to obtain that
$$\begin{bmatrix}
A & C & [0]_{(n-s) \times (s-1)} \\
R_1 & a & [0]_{1 \times (s-1)} \\
B_1 & [0]_{(s-1) \times 1} & [0]_{(s-1) \times (s-1)}
\end{bmatrix} \in \calM.$$
We deduce that
$\begin{bmatrix}
A & C \\
R_1 & a
\end{bmatrix}$ is triangularizable over $\F$, and by permuting the basis vectors and transposing we conclude that
$\begin{bmatrix}
a & C^T \\
R_1^T & A^T
\end{bmatrix}$ is triangularizable over $\F$.
Hence the Erasure Lemma yields $R_1^T=0$, i.e.\ $R_1=0$.

Likewise we obtain that all the rows of $B$ are zero. Hence, we have shown that $\calS$ is reducible
(and more precisely that $W$ is invariant under all the elements of $\calS$), which contradicts our starting assumption.
Hence the inductive step has been climbed, and our proof of Theorem \ref{theo:optimalcarnot2} is now complete.

\begin{Rem}
An alternative route, in the last segment of the proof, consists in applying point (b) of Atkinson's theorem to the space
$\pi \calT_{|\calH'}$ to prove directly that $D(\calS') \simeq \Mats_s(\F)$
(with the case $s=2$ requiring a similar treatment as we have done for Case 1).
This would allow us to avoid using an induction hypothesis to understand the structure of $D(\calS')$.
\end{Rem}

\section{Perfect infinite fields with characteristic $2$}\label{section:car2}

In this final section, we tackle the case of infinite perfect fields with characteristic $2$.
Our aim is to prove Theorem \ref{theo:car2}, which we recall below:

\begin{theo}
Let $\F$ be a perfect infinite field of characteristic $2$. Assume that $\F$ is not quadratically closed.
Then $t_n(\F) = \dbinom{n+1}{2}$ for all $n \geq 1$.
Morever, the only irreducible optimal weakly triangularizable matrix spaces are $\Mat_1(\F)$ and $\mathfrak{sl}_2(\F)$.
\end{theo}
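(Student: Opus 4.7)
I would prove both parts of Theorem \ref{theo:car2} by simultaneous induction on $n$. The base case $n=2$ is where the characteristic $2$ features come to the fore. First, $t_2(\F) \le 3$ is immediate (a $4$-dimensional weakly triangularizable subspace of $\Mat_2(\F)$ would force $\F$ quadratically closed), and the existence of $\mathfrak{sl}_2(\F) \vee \MatT_{n-2}(\F)$ then gives $t_n(\F) \ge \dbinom{n+1}{2}$ for every $n\ge 2$. For a $3$-dimensional irreducible weakly triangularizable $\calS \subseteq \Mat_2(\F)$, I would first show $I \in \calS$ (otherwise $\F I + \calS = \Mat_2(\F)$ would be weakly triangularizable), whence $\calS^\bot = \F N$ with $N \in \mathfrak{sl}_2(\F)$. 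If $N$ is not scalar, then in characteristic $2$ it is similar to a Jordan block $\sqrt{\det N}\, I + E_{1,2}$, and one verifies directly that the resulting space $J^\bot$ is either upper-triangular (when $\det N = 0$, contradicting irreducibility) or fails to be weakly triangularizable (when $\det N \ne 0$, because the characteristic polynomial of a generic element of $J^\bot$ carries an Artin--Schreier obstruction $\mu\sqrt{\det N}/\nu$ ranging over all of $\F$, forcing the image of $\wp : y\mapsto y^2+y$ to equal $\F$, a contradiction). Hence $N$ is scalar and $\calS = \mathfrak{sl}_2(\F)$.

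For the inductive step ($n \ge 3$), let $\calS \subseteq \Mat_n(\F)$ be weakly triangularizable of dimension $t_n(\F)$. If $\calS$ is reducible, the standard block-upper-triangularization combined with the inductive bounds on the diagonal blocks gives $\dim\calS \le \dbinom{k+1}{2}+\dbinom{n-k+1}{2}+k(n-k)=\dbinom{n+1}{2}$. Otherwise $\calS$ is irreducible of dimension $\dbinom{n+1}{2}$, and I would rerun the machinery of Section \ref{section:optimal}: the Erasure Lemma is valid in every non-quadratically-closed field, so Claim \ref{claim:Sbotreduced} still gives that $\calS^\bot$ is reduced, and the inductive bound $t_{n-1}(\F) \le \dbinom{n}{2}$ together with Claim \ref{claim:vectorx} still gives $\maxrk \widehat{\calS^\bot} \le n-1$. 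Atkinson's theorem applied to $\widehat{\calS^\bot}$ then produces the usual dichotomy.

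In Case 1 (target-semiprimitive, or $n=2$), the proof of Theorem \ref{theo:optimalcarnot2} still extracts an alternator $c$ with Gram matrix $P$, and since $I \in \calM$ the matrix $P$ is symmetric. In characteristic $2$, $P$ is either alternating or congruent to a diagonal. The diagonal case is excluded for every $n \ge 2$: the characteristic polynomial of $(E_{1,1}+\lambda(E_{1,2}+E_{2,1}))P$ equals $t^{n-2}(t^2+p_1 t+\lambda^2 p_1 p_2)$, whose splitting for every $\lambda \in \F$ would force $\lambda^2(p_2/p_1) \in \im\wp$ for every $\lambda\in\F$, hence (by perfectness and $p_2\ne 0$) $\im \wp=\F$, a contradiction. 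The alternating case forces $n$ even; for $n=2$ a direct computation gives $\Mats_2(\F)P = \mathfrak{sl}_2(\F)$, as desired. For $n=2m\ge 4$, I would pick $\alpha \in \F\setminus\im\wp$, build a symmetric $m\times m$ matrix $D$ with characteristic polynomial $t^{m-2}(t^2+t+\alpha)$, and note that $M=\begin{pmatrix}0 & I_m\\ D & 0\end{pmatrix}$ satisfies $MP$ symmetric (so $M\in\calM$) while its eigenvalues are the square roots of those of $D$; the irrational eigenvalues of $D$ have no square root in $\F$ (since $\F$ is perfect), so $M$ is not triangularizable, a contradiction.

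Case 2 (not target-semiprimitive, $n>2$) is the chief obstacle. The template of Section \ref{section:optimalAtkinson} produces a quotient space $D(\calS')\subseteq \Mat_s(\F)$ that is optimal weakly triangularizable with $s<n$, and by the inductive hypothesis together with Theorem \ref{theo:generaldecomposition} this $D(\calS')$ is similar to a joint of copies of $\Mat_1(\F)$ and $\mathfrak{sl}_2(\F)$. In the original NRC argument one then invokes $E_{i,i}\in D(\calS')$ to apply the Erasure Lemma, but the $\mathfrak{sl}_2$ blocks of $D(\calS')$ do not contain $E_{i,i}$ (which has trace $1$, violating the condition $m_{i,i}=m_{j,j}$). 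The workaround I propose is to use instead the rank-$1$ elements $E_{i,i+1}$ and $E_{i+1,i}$ of each $\mathfrak{sl}_2$ block, whose ranges are the standard basis lines; inserting one of these into the lower-right corner of an element of $\calM'$ and cancelling everything but one row and one column yields, after permuting basis vectors and transposing, a matrix of $\calM$ of the form $\begin{pmatrix} a & \star \\ C & N\end{pmatrix}$ to which the Erasure Lemma applies, forcing the relevant row of the block $B$ to vanish. Iterating over $i$ gives $B=0$, so $\calS$ stabilizes $W$, contradicting irreducibility and closing the induction. The careful bookkeeping of the extra off-diagonal entry contributed by $E_{i,i+1}$ in place of $E_{i,i}$ is where the bulk of the technical work lies.
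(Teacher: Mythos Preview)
Your base case and Case~1 are essentially correct, with some valid variations on the paper (your direct Artin--Schreier obstruction for diagonal $P$ is arguably cleaner than the paper's reduction to $\Mats_2(\F)$; your treatment of alternating $P$ for $n\ge 4$ via the block matrix $\begin{bmatrix} 0 & I_m \\ D & 0\end{bmatrix}$ also works, though the paper's choice $\begin{bmatrix} A & 0 \\ 0 & A^T\end{bmatrix}$ is simpler). There are, however, two genuine gaps.

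\textbf{The logical structure of the induction.} In the irreducible branch you write ``of dimension $\dbinom{n+1}{2}$'', but that equality is precisely what remains to be proved; if $t_n(\F) > \dbinom{n+1}{2}$ then $\dim\calS^\bot < \dbinom{n}{2}$ and the Atkinson machinery is not justified as written (and for $n=3$ the hypothesis of point~(b) can actually fail). The paper closes this by proving simultaneously that for $n\ge 3$ there is no irreducible weakly triangularizable space of dimension $\dbinom{n+1}{2}$ \emph{or} $\dbinom{n+1}{2}+1$: the second case, applied to a subspace of that dimension extracted from a putative larger space, forces $t_n(\F)\le\dbinom{n+1}{2}$, and the first then gives the classification.

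\textbf{Case 2.} This is the serious gap. Your workaround via the rank-one elements $E_{i,i+1}$ and $E_{i+1,i}$ does not produce a situation to which the Erasure Lemma applies. After quotienting out the irrelevant basis directions, the diagonal entry playing the role of $a$ is forced to be $0$, and the Erasure Lemma with $a$ fixed at $0$ is \emph{false} over a perfect field of characteristic~$2$: the matrix $\begin{bmatrix} 0 & R \\ e_1 & 0_{(k-1)\times(k-1)} \end{bmatrix}$ has characteristic polynomial $t^{k-2}(t^2+r_1)$, which always splits by perfectness, yet the column $e_1$ is nonzero. The observation you are missing is that in characteristic~$2$ one has $\alpha I_2 \in \mathfrak{sl}_2(\F)$ for every $\alpha\in\F$, since $\tr(\alpha I_2)=2\alpha=0$. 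The paper packages this as a Special Erasure Lemma: taking $A=\alpha I_2$ in the $\mathfrak{sl}_2$ block and $B$ with one row zero makes a codimension-one coordinate subspace invariant, and the induced matrix has the form $\begin{bmatrix} \alpha & R \\ C_1 & D \end{bmatrix}$ with $\alpha$ and $R$ now genuinely free, so the ordinary Erasure Lemma yields $C_1=0$.
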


We assume from now on that $\F$ is a perfect infinite field of characteristic $2$, but not a quadratically closed one.
The proof of Theorem \ref{theo:car2} is by induction on $n$.

\subsection{The $n=2$ case}\label{section:n=2car2}

We start with the case $n=2$, which is obvious for the dimension part: not all $2$-by-$2$ matrices are triangularizable since $\F$
is not quadratically closed, yet every matrix of $\mathfrak{sl}_2(\F)$ has its characteristic polynomial of the form $t^2-\alpha$ for some $\alpha \in \F$, and hence is
triangularizable because $\F$ is perfect of characteristic $2$.

Conversely, let $\calH$ be a  weakly triangularizable $3$-dimensional linear subspace of $\Mat_2(\F)$.
Then $I_2 \in \calH$, because otherwise $\F I_2\oplus \calH$ would be weakly triangularizable and with dimension $4$.
Hence $\calH^\bot=\F B$ for some $B \in \Mat_2(\F) \setminus \{0\}$ with trace $0$.
If $B\in \F I_2$ then $\calH=\mathfrak{sl}_2(\F)$ and we are done. Otherwise $B$ is cyclic, and by conjugating $\calH$ if necessary we can assume that $B=\begin{bmatrix}
0 & 1 \\
\beta & 0
\end{bmatrix}$ for some $\beta \in \F$.
Hence, for all $(x,y,z) \in \F^3$, the matrix
 $\begin{bmatrix}
x & y \\
\beta y & z
\end{bmatrix}$ belongs to $\calH$, and hence is triangularizable over $\F$.

Assume that $\beta \neq 0$. Then we take $x=0$, $y=1$ and an arbitrary $z$,
and we see that all the polynomials $t^2-zt+\beta$ split over $\F$,
and hence by scaling $t^2-t+\beta/z^2$ would have a root in $\F$ for all $z \in \F^\times$.
Since $\F$ is perfect this would yield that $t^2-t+\gamma$ has a root in $\F$ for all $\gamma \in \F^\times$, and by scaling
we would deduce that $t^2-ut+\gamma$ has a root in $\F$ for all $u \in \F^\times$ and all $\gamma \in \F^\times$.
Since $\F$ is perfect this would yield that $\F$ is quadratically closed, thereby contradicting our starting assumption.

Hence $\beta=0$ and we obtain that $\calH=\MatT_2(\F)$, which is reducible.
We conclude that $\mathfrak{sl}_2(\F)$ is the sole irreducible weakly triangularizable $3$-dimensional linear subspace of $\Mat_2(\F)$.

\subsection{Start of the inductive step}

For the inductive step we will use a similar idea as in Section \ref{section:optimal}, but with an additional trick.
Here we will completely break the habit of trying to prove the inequality of dimensions first and
analyse the optimal spaces afterwards. We will take spaces with dimension equal or greater by one unit (!) than the dimension
we believe is the critical one, and then we will be able to control the structure of these spaces and discard the second possibility.

The idea is that, by assuming a dimension not too great, we can have a good understanding of the orthogonal complement of the space.
Let $n \geq 3$, let $V$ be an $n$-dimensional vector space, and let $\calS$ be a
weakly triangularizable subspace of $\End(V)$. We assume further that $\calS$ is irreducible and that
$$\dim \calS=\dbinom{n+1}{2}+i \quad \text{for some $i \in \{0,1\}$.}$$
We will prove that the case $i=1$ leads to a contradiction and that the case $i=0$ leads to \ldots yet another contradiction
(but it will take more time to reach it!).
We must immediately explain why this will allow us to climb the inductive step.

Indeed, say that we have a weakly triangularizable subspace $\calS$ of $\End(V)$ with dimension greater than $\dbinom{n+1}{2}$.
We can extract a linear subspace $\calT$ of $\calS$ with dimension $\dbinom{n+1}{2}+1$; of course $\calT$ is weakly triangularizable.
Then the claimed result yields that $\calT$ is reducible, and hence we obtain $\dim \calT \leq \dbinom{n+1}{2}$
by considering a reduced form and applying the induction hypothesis to the diagonal blocks.
Let us explain this in details: if we have a nontrivial $\calT$-invariant subspace $W$ with dimension $d \in \lcro 1,n-1\rcro$,
then for every $u \in \calT$ we consider the induced endomorphism $u_W$ and $u^{V/W}$ of $W$ and $V/W$, respectively.
Then $\{u_W \mid u \in \calS\}$ and $\{u^{V/W} \mid u \in \calS\}$ are weakly triangularizable subspaces of $\End(W)$ and $\End(V/W)$, respectively,
and the kernel of the mapping $u \in \calT \mapsto (u_W,u^{V/W})$ is included in the space of all linear mappings $u \in \End(V)$
that vanish everywhere on $W$ and map into $W$, a space which has dimension $d(n-d)$. Then
$\dim \calT \leq t_d(\F)+t_{n-d}(\F)+d(n-d)$, and hence by induction
$\dim \calT \leq \dbinom{d+1}{2}+\dbinom{n-d+1}{2}+d(n-d)=\dbinom{n+1}{2}$.
Hence we obtain $t_n(\F)=\dbinom{n+1}{2}$, and finally the above claim yields that there is no irreducible optimal
weakly triangularizable subspace of $\End(V)$.

Now, let us proceed according to the above plan. From now on, $\calS$ is an irreducible weakly triangularizable subspace
of $\End(V)$ with dimension $\dbinom{n+1}{2}+i$ for some $i \in \{0,1\}$.
First of all, we note that Claim \ref{claim:Sbotreduced} is still true in the present context, merely by the irreducibility of $\calS$
(as it relies only upon the Erasure Lemma, which holds for all fields that are not quadratically closed).

\begin{claim}\label{claim:vectorxcar2}
All the operators in $\widehat{\calS^\bot}$ are nonsurjective.
Moreover, if $i=1$ then $\maxrk (\widehat{\calS^\bot}) \leq n-2$.
\end{claim}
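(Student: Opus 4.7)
The plan is to adapt the argument of Claim \ref{claim:vectorx} to allow for an extra unit of dimension in $\calS$. The master relation is Lemma \ref{lemma:adaptedtodualop}: for any $x \in V \setminus \{0\}$, setting $d := \dim(\calS \cap \Hom(V, \F x))$, one has $\rk \widehat{x} = n - d$. Both conclusions of the claim are therefore lower bounds on $d$.

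To obtain such a lower bound I would reuse the quotient construction. Let $\pi : V \twoheadrightarrow V/\F x$ denote the canonical projection. Applying the rank theorem to $u \in \calS \mapsto \pi(u(x)) \in V/\F x$ yields
\[
\dim \calS \leq (n-1) + \dim \calS', \quad \text{where } \calS' := \{u \in \calS : u(x) \in \F x\}.
\]
The map $u \in \calS' \mapsto \overline{u} \in \End(V/\F x)$ induced by $\pi$ has kernel exactly $\calS \cap \Hom(V, \F x)$ of dimension $d$, and its image is a weakly triangularizable subspace of $\End(V/\F x)$. Since $n-1 \geq 2$, the inductive hypothesis applies and bounds that image by $t_{n-1}(\F) = \binom{n}{2}$. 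Combining,
\[
\dim \calS \leq (n-1) + d + \binom{n}{2} = \binom{n+1}{2} + d - 1.
\]

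Both assertions now drop out. If some $\widehat{x}$ were surjective then $d = 0$, giving $\dim \calS \leq \binom{n+1}{2} - 1$, which contradicts $\dim \calS \geq \binom{n+1}{2}$. When $i = 1$, the assumption $\dim \calS = \binom{n+1}{2}+1$ forces $d \geq 2$, i.e.\ $\rk \widehat{x} \leq n-2$, for every nonzero $x$.

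I do not expect a real obstacle here: the step is essentially bookkeeping that converts an additional unit in $\dim \calS$ into a compulsory drop in $\maxrk \widehat{\calS^\bot}$. The only ingredients needed are Lemma \ref{lemma:adaptedtodualop} and the induction hypothesis; the characteristic $2$ and perfectness of $\F$ play no role in this particular claim and will only enter the subsequent steps of the inductive argument.
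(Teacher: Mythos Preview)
Your proof is correct and follows essentially the same route as the paper: both apply the rank theorem to $u \mapsto \pi(u(x))$, identify the kernel of $\calS' \to \End(V/\F x)$ with $\calS \cap \Hom(V,\F x)$, invoke the inductive bound $t_{n-1}(\F)=\binom{n}{2}$ on the image, and then read off $\rk \widehat{x} \leq n-1-i$ via Lemma \ref{lemma:adaptedtodualop}.
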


\begin{proof}
Let $x \in V \setminus \{0\}$. Remember the notation $\widehat{x} : u \in \calS^\bot \mapsto u(x) \in V$.
Just like in the proof of Claim \ref{claim:vectorx}, we find
a weakly triangularizable linear subspace $\calH$ of $\End(V/\F x)$ (namely
$\calH=\{u^{V/\F x} \mid u \in \calS \; \text{such that}\; u(x) \in \F x\}$) such that
$$\dim \calS \leq n-1+\dim(\calS \cap \Hom(V,\F x))+\dim \calH = n-1+(n-\rk \widehat{x})+\dim \calH.$$
By induction $\dim \calH \leq \dbinom{n}{2}$ and hence
$$n-\rk \widehat{x} \geq i+\dbinom{n+1}{2}-\dbinom{n}{2}-n+1 \geq i+1.$$
This yields the claimed results.
\end{proof}

\subsection{Applying Atkinson's theorem (part 1)}

Now, we can try to apply Atkinson's theorem to the dual operator space
$$\calT:=\widehat{\calS^\bot}.$$

Let us start with the case where $\calT$ is target-semiprimitive.
We have $\dim(\calS^\bot) \geq \dbinom{n}{2}-1$,
and we note that $\dbinom{n}{2}-1 > \dbinom{n-1}{2}+1$ if (and only if!) $n>3$.
Now, if $n=3$ and $\dim \calS^\bot = \dbinom{n}{2}-1$, then Claim \ref{claim:vectorxcar2} shows that
$\maxrk(\calT) \leq 1$. Yet it is folklore\footnote{In \cite{Atkinson} this is the statement ``There are no indecomposable $1$-spaces" at the bottom of page 314.} that for a (nonzero) vector space of linear operators with rank at most $1$, the nonzero operators have the same range or the same kernel
(an easy consequence of the fact that the sum of any two rank $1$ operators with distinct kernels and distinct ranges has rank $2$).
This implies that $\calT$ is not reduced because $\dim \calS^\bot>1$ and $\dim V>1$.
However $\calT$ is source-reduced (as is any dual operator space), whereas it is target-reduced because so is $\calS^\bot$
(we have explained that $\calS^\bot$ is reduced right before stating Claim \ref{claim:vectorxcar2}).

Hence we always have $\dim \calS^\bot > \dbinom{n-1}{2}+1$,
and in particular the last statement in Theorem \ref{theo:Atkinson} shows that $\maxrk (\calT)=n-1$, and hence
$i=0$ by Claim \ref{claim:vectorxcar2}. Hence we actually have $\dim \calS^\bot=\dbinom{n}{2}$, and from there
the remainder of the proof is strictly similar to the one from Section \ref{section:optimalAtkinson},
up to the point where $\calS$ is represented in some basis by the matrix space $\Mats_n(\F)P$ for some invertible symmetric matrix $P \in \GL_n(\F)$.

If $P$ is alternating, we proceed as in Section \ref{section:optimalAtkinson} to find a contradiction, noting here that $n>2$
(in that case we only used the fact that $\F$ is not quadratically closed).
Hence $P$ is congruent to a diagonal matrix, and as in Section \ref{section:optimalAtkinson} we deduce that
$\calS$ is represented by $\Mats_n(\F)$. Yet this is actually impossible.
Indeed, if true it would follow (by considering matrices $S \oplus 0_{n-2}$ with $S \in \Mats_2(\F)$) that
every matrix of $\Mats_2(\F)$ is triangularizable over $\F$, and we have seen in Section \ref{section:n=2car2}
that this fails because $\Mats_2(\F)$ is irreducible and $\Mats_2(\F) \neq \mathfrak{sl}_2(\F)$.

This final contradiction completes the proof in the case where $\calT$ is target-semiprimitive.

\subsection{Applying Atkinson's theorem (part 2)}

We complete the proof of Theorem \ref{theo:car2} by
examining the case where $\calT$ is not target-semiprimitive.
Then we follow the same routine as in the study of Case 2 in Section \ref{section:optimalAtkinson}, and we keep the same data
$(W,\pi,\calH,\calH')$ throughout, with $s:=\dim(V/W)$.
Once more, we can apply the first part of Atkinson's theorem to obtain
$$\dim \calH' \leq \dbinom{s}{2}.$$
Besides, by defining
$$\calS':=\{u \in \calS : u_{|W}=0\}$$
and by induction we find
$$\dim \calS' \leq s(n-s)+\dbinom{s+1}{2}=sn-\dbinom{s}{2}.$$
Besides, combining the rank theorem with Lemma \ref{lemma:orthocomplement} applied to $\calS^\bot$, we find
\begin{align*}
\dim \calS' & =\dim \calS- \dim \{u_{|W} \mid u \in \calS\} \\
 & = \dim \calS-n(n-s)+\dim \calH \\
 & = n^2-\dim \calS^\bot-n(n-s)+\dim \calH \\
 & = sn-\dim \calH'.
\end{align*}
Hence $\dim \calH' \geq \dbinom{s}{2}$ and we deduce that $\dim \calH'=\dbinom{s}{2}$ and that $\dim \calS'=s(n-s)+\dbinom{s+1}{2}$.

Now we move directly to the point where we have found a basis $\bfB$ of $V$ whose first $n-s$ vectors span $W$, and such that matrix space $\calM$ that is associated with $\calS$
satisfies the following properties:
\begin{enumerate}[(i)]
\item The space $\calM$ contains every matrix of the form $\begin{bmatrix}
[0]_{(n-s) \times (n-s)} & C \\
[0]_{s \times (n-s)} & [0]_{s \times s}
\end{bmatrix}$ with $C \in \Mat_{n-s,s}(\F)$.
\item There is an optimal weakly triangularizable subspace $\calD$ of $\Mat_s(\F)$ such that $\calM$ contains every matrix of the form
$\begin{bmatrix}
[0]_{(n-s) \times (n-s)} & [0]_{(n-s) \times s} \\
[0]_{s \times (n-s)} & D
\end{bmatrix}$ with $D \in \calD$.
\item Every matrix of $\calM$ has its lower-right block in $\calD$.
\end{enumerate}
It remains to prove that $W$ is invariant under all the elements of $\calS$, i.e.\ that all the matrices of $\calM$ have their lower-left block equal to zero.
This is where the proof differs from the one of Section \ref{section:optimalAtkinson}, as the Erasure Lemma cannot be used directly.
Indeed, here the induction hypothesis now yields that $\calD$ is conjugated to a join
of spaces of the form $\Mat_1(\F)$ and $\mathfrak{sl}_2(\F)$: Without loss of generality we can
modify the last $s$ vectors of $\bfB$ so that $\calD$ actually equals such a join. But now it is no longer true that
$\calD$ contains all the unit matrices $E_{i,i}$ with $i \in \lcro 1,s\rcro$ (because of the $\mathfrak{sl}_2(\F)$ cells).
Yet, by using the same strategy as in Section \ref{section:optimalAtkinson},
it is clear that it suffices to prove the following variation of the Erasure Lemma:

\begin{lemma}[Special Erasure Lemma]
Let $k \geq 3$ be an integer.
Let $C \in \Mat_{k-2,2}(\F)$ and $D \in \Mat_{k-2}(\F)$.
Assume that for every $A \in \mathfrak{sl}_2(\F)$ and every $B \in \Mat_{2,k-2}(\F)$, the matrix
$$\begin{bmatrix}
A & B \\
C & D
\end{bmatrix}$$
is triangularizable. Then $C=0$.
\end{lemma}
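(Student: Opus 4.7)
The plan is to derive $C = 0$ by two applications of the original Erasure Lemma (Lemma \ref{lemma:erasure1}), leveraging the characteristic-$2$ identity $\mu I_2 \in \mathfrak{sl}_2(\F)$, valid for every $\mu \in \F$ since $2\mu = 0$. This identity lets the scalar parameter $\mu$ take on the role of the free $(1,1)$ entry that the Erasure Lemma requires, while the rest of $A$ stays frozen at zero.

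For the first application, I would fix an arbitrary $\mu \in \F$ and an arbitrary row $B_2 \in \Mat_{1, k-2}(\F)$, and specialize the hypothesis with $A := \mu I_2$ and $B := \begin{bmatrix} 0_{1 \times (k-2)} \\ B_2 \end{bmatrix}$. The first row of $\begin{bmatrix} A & B \\ C & D \end{bmatrix}$ is then $(\mu, 0, 0, \ldots, 0)$, and expanding its characteristic polynomial along that row yields
\[
(t - \mu) \cdot \chi_{M_1}(t), \qquad M_1 := \begin{bmatrix} \mu & B_2 \\ C^{(2)} & D \end{bmatrix} \in \Mat_{k-1}(\F),
\]
where $C^{(2)}$ denotes the second column of $C$. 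Since the linear factor $(t-\mu)$ always splits over $\F$, the triangularizability hypothesis on the original matrix is equivalent, here, to that of $M_1$, so $M_1$ is triangularizable for every $\mu \in \F$ and every $B_2$. Applying Lemma \ref{lemma:erasure1} to $M_1$ then yields $C^{(2)} = 0$.

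Symmetrically, the specialization $A := \mu I_2$ and $B := \begin{bmatrix} B_1 \\ 0_{1 \times (k-2)} \end{bmatrix}$ makes the second row of $\begin{bmatrix} A & B \\ C & D \end{bmatrix}$ equal to $(0, \mu, 0, \ldots, 0)$, and expansion along that row produces
\[
(t - \mu) \cdot \chi_{M_2}(t), \qquad M_2 := \begin{bmatrix} \mu & B_1 \\ C^{(1)} & D \end{bmatrix}.
\]
A second application of Lemma \ref{lemma:erasure1} gives $C^{(1)} = 0$, and combining with the previous step yields $C = 0$.

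Beyond Lemma \ref{lemma:erasure1} itself, no new tools seem required; the only trick is to restrict the family of admissible $A$ to the scalar matrices $\mu I_2$ in order to uncover a "free row'' that enables a clean one-term cofactor expansion. The hypothesis $k \geq 3$ ensures that $D$ is at least $1 \times 1$, which is exactly the dimension requirement of Lemma \ref{lemma:erasure1} applied to $M_1$ and $M_2$. Accordingly I do not anticipate any substantial obstacle to carrying out this plan.
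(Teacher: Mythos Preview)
Your proof is correct and follows essentially the same approach as the paper: both specialize $A$ to a scalar matrix $\alpha I_2 \in \mathfrak{sl}_2(\F)$ (using $\operatorname{char}(\F)=2$) and zero out one row of $B$, thereby reducing to a $(k-1)\times(k-1)$ block to which the ordinary Erasure Lemma applies, once for each column of $C$. The only cosmetic difference is that the paper reads the reduction as an invariant-subspace argument (the span of $e_1,e_3,\dots,e_k$ is $M$-invariant when the second row of $B$ vanishes), whereas you obtain the same factorisation of $\chi_M$ via cofactor expansion along the ``free'' row; these are equivalent viewpoints.
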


\begin{proof}
The result is actually deduced from the Erasure Lemma. Denote by $C_1$ the first column of $C$.
Take $A=\begin{bmatrix}
\alpha & 0 \\
0 & \alpha
\end{bmatrix}$
and an arbitrary matrix $B$ with second row equal to zero and an arbitrary first row $R$. Denoting by $(e_1,\dots,e_k)$
the standard basis of $\F^k$, this has the effect of making the subspace $\Vect(e_1,e_3,\dots,e_k)$ invariant under $M:=\begin{bmatrix}
A & B \\
C & D
\end{bmatrix}$, with resulting matrix equal to
$\begin{bmatrix}
\alpha & R \\
C_1 & D
\end{bmatrix}$. Hence the latter is triangularizable whatever the choices of $\alpha$ and $R$.
Since $\F$ is not quadratically closed, the Erasure Lemma yields $C_1=0$. Likewise the second column of $C$ vanishes.
\end{proof}

Applying the Special Erasure Lemma finally shows that $\calS$ is reducible, which contradicts the starting assumption. Hence the inductive step
is climbed: there is no irreducible weakly triangularizable subspace of $\End(V)$ of dimension either $\dbinom{n+1}{2}$ or $\dbinom{n+1}{2}+1$.
As we have explained in the beginning of Section \ref{section:car2}, this is enough to recover the equality $t_n(\F) = \dbinom{n+1}{2}$ and
the lack of an irreducible optimal weakly triangularizable subspace of $\Mat_n(\F)$.
Hence Theorem \ref{theo:car2} is proved, which in particular completes the proof of Theorem \ref{theo:dim}.

\appendix

\section{Appendix. A theory of optimal spaces}

Here, the setting is broader than in remainder of the present article. We consider
a division ring $\D$, i.e.\ a nontrivial unital associative ring in which every nonzero element is invertible,
and we assume that $\D$ is finite-dimensional over its center $C$. We take a subfield $\F$ of $C$ over which $\D$ has finite degree $d$.
We consider finite-dimensional \emph{right} vector space over $\D$, and matrices with entries in $\D$.
The additive group $\Mat_{n,p}(\D)$ of all $n$-by-$p$ matrices with entries in $\D$ is naturally seen as an $\F$-vector space (of dimension $dnp$).

For a right vector space $V$ over $\D$, a subset $\calX$ of $\End_\D(V)$ is called \textbf{irreducible} when $V$ is nonzero and $\calX$ has no proper invariant $\D$-linear subspace,
and $\calX$ is called reducible otherwise.

In this setting, we consider a \textbf{general property on endomorphisms}, that is a property $\calP(V,u)$ in two variables $V$ and $u$, where $V$ ranges over the finite-dimensional $\D$-vector spaces
and $u$ ranges over $\End_\D(V)$, which is invariant under similarity, meaning that for every isomorphism $\varphi : V \overset{\simeq}{\rightarrow} V'$ of finite-dimensional
$\D$-vector spaces and every $u \in \End_\D(V)$, the conditions $\calP(V,u)$ and $\calP(V',\varphi u \varphi^{-1})$ are equivalent.
We will simply say that an endomorphism $u$ of a vector space $V$ has property $\calP$ when $\calP(V,u)$ holds true,
and we will say that a square matrix $A \in \Mat_n(\D)$ has property $\calP$ when the endomorphism $X \in \D^n \mapsto AX \in \D^n$
has property $\calP$, i.e.\ any endomorphism of a right vector space (of dimension $n$) over $\D$ that is represented by $A$
has property $\calP$.

Now, we make the following critical assumption:

\begin{Def}
A general property on endomorphisms $\calP(V,u)$ is said to satisfy the \textbf{strong inheritance condition} whenever, for every finite-dimensional $\D$-vector space
$V$, every $u \in \End_\D(V)$ and every nontrivial $\D$-linear subspace $W$ of $V$ that is invariant under $u$, we have
$\calP(V,u)$ if and only if $\calP(W,u_W)$ and $\calP(V/W,u^{V/W})$, where $u_W$ stands for the endomorphism of $W$ induced by $u$, and
$u^{V/W}$ stands for the endomorphism of $V/W$ induced by $u$.
\end{Def}

Typical examples of general properties on endomorphisms that satisfy the strong inheritance condition are:
\begin{itemize}
\item $\F$-triangularizability (the property ``$u$ is annihilated by some split polynomial of $\F[t]$");
\item nilpotence (the property $\exists k \geq 1 : \; u^k=0$);
\item the $\F$-trivial spectrum property, i.e.\ $u$ has no non-zero eigenvalue in $\F$.
\end{itemize}
For the last property, note that since $\D$ has finite dimension over $\F$, one can see $u$ as an endomorphism of a finite-dimensional
vector space over $\F$.

Now, let us take such a property $\calP(V,u)$. Given a finite-dimensional $\D$-vector space $V$,
an $\F$-linear subspace of $\End_\D(V)$ is said to have property $\calP$ when all its elements have property $\calP$,
and likewise for $\F$-linear subspaces of $\Mat_n(\D)$.
Such a linear subspace is called \textbf{optimal} (with respect to $\calP$) when it has the greatest possible dimension
(over $\F$) among the $\F$-linear subspaces that have property $\calP$.

Now, let $\calS$ be an $\F$-linear subspace of $\End_\D(V)$ with property $\calP$.
Consider the simple situation where we have a nontrivial $\calS$-invariant subspace $W$.
It is a bit easier to think in terms of matrices, so say for a moment that $V=\D^n$ for some $n \geq 2$, and $W=\D^p \times \{0\}$ for some
$p \in \lcro 1,n-1\rcro$. The elements of $\End_\D(V)$ are identified with matrices in the usual way (the endomorphism that corresponds to the matrix $A$
is $X \mapsto AX$). So, the assumption that $W$ is $\calS$-invariant means that every
$M \in \calS$ reads
$$M=\begin{bmatrix}
A(M) & B(M) \\
[0] & C(M)
\end{bmatrix} \quad \text{where $A(M) \in \Mat_p(\D)$ and so on.}$$
We can naturally identify $A(\calS)$ with an $\F$-linear subspace of $\End_\D(\D^p)$, and $C(\calS)$ with an $\F$-linear subspace of $\End_\D(\D^{n-p})$.
By the strong inheritance condition (and because of the invariance under similarity)
the space $\calS$ has property $\calP$ if and only if both $A(\calS)$ and $C(\calS)$ have property $\calP$.
Moreover, $\calS$ is included in the \emph{joint} of $A(\calS)$ and $C(\calS)$, defined as follows:

\begin{Def}
Let $\calA$ and $\calC$ be respective subsets of $\Mat_n(\D)$ and $\Mat_p(\D)$. The \textbf{joint} $\calA \vee \calC$ is defined as
the set of all matrices in $\Mat_{n+p}(\D)$ of the form
$$\begin{bmatrix}
A & B \\
[0]_{p \times n} & C
\end{bmatrix} \quad \text{with $A \in \calA$, $B \in \Mat_{n,p}(\D)$ and $C \in \calC$.}$$
If $\calA$ and $\calC$ are $\F$-linear subspaces, their joint is an $\F$-linear subspace and
$$\dim_\F(\calA \vee \calC)=npd+\dim_\F \calA+\dim_\F \calC.$$
\end{Def}

Thus $\calS \subseteq A(\calS) \vee C(\calS)$. Moreover the strong inheritance condition yields that
$A(\calS) \vee C(\calS)$ has property $\calP$. Since $\calS$ is optimal, we deduce that $\calS = A(\calS) \vee C(\calS)$
and it follows in particular that $\calS$ contains all the operators $u \in \End_\D(V)$ that vanish everywhere on $W$ and map into $W$.
Finally, if $A(\calS)$ were not optimal, then we could choose an optimal subspace $\calT$ of $\End_\D(W)$ with property
$\calP$ and $\dim_\F \calT>\dim_\F A(\calS)$, and by using the strong inheritance condition once more, we would retrieve that the space $\calT \vee C(\calS)$
has property $\calP$, contradicting the optimality of $\calS$ since $\dim_\F (\calT \vee C(\calS))>\dim_\F \calS$.
Hence $A(\calS)$ is optimal, and likewise $C(\calS)$ is optimal.

Now, let us come back to the geometric version and let us generalize the previous idea.
First of all, a \textbf{flag} of $\D$-linear subspaces of $V$ is an increasing list $(V_0,\dots,V_p)$ of $\D$-linear subspaces such that
$V_0=\{0\}$ and $V_p=V$ (we call such a flag complete if $\dim_\D(V_{i+1}/V_i)=1$ for all $i \in \lcro 0,p-1\rcro$).
Let $\calF=(V_0,\dots,V_p)$ be a (potentially incomplete) flag of $\D$-linear subspaces of $V$.
Let $\calS$ be an $\F$-linear subspace of $\End_\D(V)$, and assume that all the subspaces $V_i$ are $\calS$-invariant.
Then every $u \in \calS$ induces an endomorphism $u_i$ of $V_i/V_{i-1}$ for all $i \in \lcro 1,p\rcro$.
This way, we obtain respective $\F$-linear subspaces $\calS_{\calF,1},\dots,\calS_{\calF,p}$
of $\End_\D(V_1/V_0),\dots,\End_\D(V_p/V_{p-1})$, and we find the inclusion
$\calS \subseteq \calS_{\calF,1}\vee \cdots \vee\calS_{\calF,p}$, where the joint of
respective subsets $\calT_1 \subseteq \End_\D(V_1/V_0),\dots,\calT_p \subseteq \End_\D(V_p/V_{p-1})$, denoted by
$\calT_1\vee \cdots \vee \calT_p$, is defined as the set of all $u \in \End_\D(V)$ that leave $V_1,\dots,V_p$ invariant
and induce respective elements of $\calT_1,\dots,\calT_p$ on $V_1/V_0,\dots,V_p/V_{p-1}$.

Moreover, given $u \in \calT_1\vee \cdots \vee \calT_p$, we gather from the strong inheritance condition that $u$ has property $\calP$
 if and only if $u_1,\dots,u_p$ \emph{all} have property $\calP$. Hence
$\calT_1\vee \cdots \vee \calT_p$ has property $\calP$ if and only if $\calT_1,\dots,\calT_p$ all have property $\calP$.
Like in the above matrix case, we prove that
if $\calS$ is an optimal subspace with property $\calP$ then so are $\calS_{\calF,1},\dots,\calS_{\calF,p}$, and
$$\calS=\calS_{\calF,1}\vee \dots\vee \calS_{\calF,p}.$$
Beware that the converse might not hold, i.e.\ it is possible in theory that $\calS_{\calF,1},\dots,\calS_{\calF,p}$
be optimal spaces with property $\calP$ but $\calS$ is not optimal.

Finally, for all the $\calS_{\calF,i}$ spaces to be irreducible, it is clearly necessary and sufficient that $\calF$
be a \emph{maximal} flag of $\calS$-invariant $\D$-linear subspaces.

We now prove that there is a unique such flag if $\calS$ is optimal. This is based on the following lemma:

\begin{lemma}\label{lemma:totallyordered}
Let $\calS$ be an optimal linear subspace of $\End_\D(V)$ with property $\calP$.
Then the $\calS$-invariant $\D$-linear subspaces of $V$ are totally ordered by inclusion.
\end{lemma}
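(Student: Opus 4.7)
The plan is to argue by contradiction. Suppose there exist two $\calS$-invariant $\D$-linear subspaces $W_1$ and $W_2$ of $V$ that are incomparable under inclusion, and choose $x_1 \in W_1 \setminus W_2$ together with $x_2 \in W_2 \setminus W_1$. The key ingredient, already established in the discussion preceding the lemma, is that whenever $W$ is $\calS$-invariant the optimality of $\calS$ forces $\calS = A(\calS) \vee C(\calS)$; in particular $\calS$ contains every $\D$-linear operator on $V$ that vanishes on $W$ and has image inside $W$, since these are precisely the off-diagonal block operators in the joint decomposition.

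Applying this principle with $W := W_2$, I will exhibit an explicit operator $f \in \calS$ that fails to preserve $W_1$, thereby reaching a contradiction. Write $V = W_2 \oplus U$ for some $\D$-linear complement $U$, and let $\pi : V \twoheadrightarrow U$ be the projection along $W_2$. Since $x_1 \notin W_2$, the image $\pi(x_1) \in U$ is nonzero, so we can extend it to a $\D$-basis of $U$ and take the associated first coordinate form $\phi : U \to \D$, which is right $\D$-linear and satisfies $\phi(\pi(x_1)) = 1$. Define $f \in \End_\D(V)$ by
$$f(v) := x_2 \cdot \phi(\pi(v)).$$
Then $f$ is right $\D$-linear, it vanishes on $W_2$, and its image is contained in the one-dimensional $\D$-subspace generated by $x_2$, which lies inside $W_2$. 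By the key principle, $f \in \calS$.

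However, $f(x_1) = x_2 \cdot \phi(\pi(x_1)) = x_2 \notin W_1$ while $x_1 \in W_1$, so $f(W_1) \not\subseteq W_1$. This contradicts the $\calS$-invariance of $W_1$, and hence shows that any two $\calS$-invariant subspaces must be comparable by inclusion.

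The argument is short, and the only technical subtlety is to respect the noncommutativity of $\D$: because $V$ is a \emph{right} $\D$-module, one must place the scalar on the right in the definition of $f$, writing $v \mapsto x_2 \cdot \phi(\pi(v))$ rather than $\phi(\pi(v))\cdot x_2$, in order to ensure that $f$ is right $\D$-linear. Beyond this, everything is a direct and elementary consequence of the joint decomposition theorem established for optimal spaces earlier in the appendix.
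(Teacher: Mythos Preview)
Your proof is correct and follows essentially the same approach as the paper's: both arguments rest on the fact (established just before the lemma) that optimality forces $\calS$ to contain every operator vanishing on an invariant subspace $W$ and mapping into $W$, and both exploit this by sending a vector outside $W$ to a prescribed vector inside $W$. The paper packages this as the inclusion $W \subseteq \calS X$ for all $X \in V \setminus W$ and argues directly, whereas you build the explicit rank-one operator and argue by contradiction, but these are cosmetic differences.
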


\begin{proof}
Let $W$ be a nontrivial $\calS$-invariant $\D$-linear subspace.
Since $\calS$ is optimal, we gather from the above considerations that
$\calS =\calT \vee \calT'$ for some non-empty subset $\calT$ of $\End_\D(W/\{0\})$ and some non-empty subset
$\calT'$ of $\End_\D(V/W)$, and in particular $\calS$ contains all the elements of $\End_\D(V)$ that vanish everywhere on $W$ and map into $W$.
Consequently, for all $X \in V \setminus W$, we have $W \subseteq \calS X$.

Let $W'$ be a nontrivial $\calS$-invariant subspace. Assume that $W'$ is not included in $W$.
Then we choose $X \in W' \setminus W$, and since $W'$ is invariant under $\calS$ we find $W \subseteq \calS X \subseteq W'$.
\end{proof}

Now, we can conclude. Let $\calS$ be an optimal linear subspace of $\End_\D(V)$ with property $\calP$.
By Lemma \ref{lemma:totallyordered}, the $\calS$-invariant $\D$-linear subspaces of $V$
form a flag $\calF=(V_0,\dots,V_p)$ of $V$, and this flag is of course maximal among the flags of $\calS$-invariant $\D$-linear subspaces.
Combining this with the previous general study, we can conclude:

\begin{theo}
Let $\D$ be a division ring, and $\F$ be a subfield of the center of $\D$, such that $\D$ has finite degree over $\F$.
Let $\calP(V,u)$ be a property on endomorphisms of $\D$-vector spaces that satisfies the strong inheritance condition.

Let $V$ be a non-zero finite-dimensional vector space over $\D$, and $\calS$ be an optimal $\F$-linear subspace of $\End_\D(V)$
with property $\calP$. Then there is a unique flag $\calF=(V_0,V_1,\dots,V_p)$ of $\calS$-invariant subspaces and, for all
$i \in \lcro 1,p\rcro$, a unique irreducible optimal $\F$-linear subspace $\calT_i$ of $\End_\D(V_i/V_{i-1})$ with property $\calP$ such that
$\calS=\calT_1 \vee \cdots \vee \calT_p$.
\end{theo}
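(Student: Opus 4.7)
The plan is to assemble the three ingredients already developed in the appendix: the strong inheritance condition, the joint decomposition along a flag of invariant subspaces, and Lemma \ref{lemma:totallyordered} on the total ordering of invariant subspaces. Let $\calS$ be an optimal $\F$-linear subspace of $\End_\D(V)$ with property $\calP$.

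First, I would construct the flag. By Lemma \ref{lemma:totallyordered}, the collection of $\calS$-invariant $\D$-linear subspaces of $V$ is totally ordered by inclusion, and since $V$ is finite-dimensional this collection is finite. Listing its elements in increasing order yields a uniquely determined flag $\calF = (V_0, V_1, \dots, V_p)$ with $V_0 = \{0\}$ and $V_p = V$. By construction this flag is the unique maximal flag of $\calS$-invariant subspaces of $V$. This takes care of the uniqueness of $\calF$ once we have shown that any flag realizing the required decomposition must be made of $\calS$-invariant subspaces and must be maximal among such flags.

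Next, I would apply the general discussion that precedes Lemma \ref{lemma:totallyordered}. Since each $V_i$ is $\calS$-invariant, every $u \in \calS$ induces an endomorphism $u_i \in \End_\D(V_i/V_{i-1})$, giving rise to $\F$-linear subspaces $\calT_i := \calS_{\calF,i} \subseteq \End_\D(V_i/V_{i-1})$. The inclusion $\calS \subseteq \calT_1 \vee \cdots \vee \calT_p$ is automatic, and the strong inheritance condition implies that the joint has property $\calP$. Optimality of $\calS$ then forces the equality $\calS = \calT_1 \vee \cdots \vee \calT_p$, and the same optimality argument applied block by block (replacing any $\calT_i$ by an $\F$-linear subspace $\calT_i'$ of $\End_\D(V_i/V_{i-1})$ with property $\calP$ and strictly greater dimension would produce, via the joint, a space with property $\calP$ that strictly contains $\calS$) shows that each $\calT_i$ is itself optimal with property $\calP$.

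Then I would establish irreducibility of each $\calT_i$. If some $\calT_i$ had a nontrivial invariant $\D$-subspace $U \subseteq V_i/V_{i-1}$, its preimage in $V_i$ under the projection $V_i \twoheadrightarrow V_i/V_{i-1}$ would be an $\calS$-invariant subspace strictly between $V_{i-1}$ and $V_i$, contradicting the maximality of the flag $\calF$. Hence each $\calT_i$ is irreducible.

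Finally, for the uniqueness of the $\calT_i$: given any decomposition $\calS = \calT_1' \vee \cdots \vee \calT_q'$ along a flag $\calF' = (W_0, \dots, W_q)$ of $\calS$-invariant subspaces with each $\calT_i'$ irreducible, the flag $\calF'$ must be made of $\calS$-invariant subspaces (by the definition of the joint) and must be maximal among such flags (by irreducibility of the factors, exactly as above). Lemma \ref{lemma:totallyordered} then gives $\calF' = \calF$, and consequently each $\calT_i'$ coincides with $\calT_i$ since both are the image in $\End_\D(V_i/V_{i-1})$ of the operators $u \in \calS$ that leave the flag invariant. I do not anticipate any real obstacle: the heart of the argument is Lemma \ref{lemma:totallyordered}, which has already been proved, and everything else is a direct consequence of the strong inheritance condition and the definition of the joint.
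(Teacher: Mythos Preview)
Your proposal is correct and follows essentially the same route as the paper: use Lemma \ref{lemma:totallyordered} to obtain that the $\calS$-invariant subspaces form a (necessarily unique maximal) flag, then invoke the general discussion preceding the lemma to get the joint decomposition with optimal irreducible factors. You are simply more explicit than the paper about the irreducibility and uniqueness parts, which the paper leaves as immediate consequences of the preceding general study.
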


For matrices, the previous theorem is translated as follows:

\begin{theo}\label{theo:generalnonsense}
Let $\D$ be a division ring, and $\F$ be a subfield of the center of $\D$, such that $\D$ has finite degree over $\F$.
Let $\calP(V,u)$ be a property on endomorphisms that satisfies the strong inheritance condition.
Let $n>0$ be a positive integer, and $\calM$ be an optimal $\F$-linear subspace of $\Mat_n(\D)$ with property $\calP$.

Then there is a matrix $P \in \GL_n(\F)$, a partition $n=n_1+\cdots+n_p$ into positive integers, and for each $i \in \lcro 1,p\rcro$
an irreducible optimal $\F$-linear subspace $\calM_i$ of $\Mat_{n_i}(\D)$ such that
$$P \calM P^{-1}=\calM_1 \vee \calM_2 \vee \cdots \vee \calM_p.$$
The list $(n_1,\dots,n_p)$ is then uniquely determined by the similarity class of $\calM$, and so is every space $\calM_i$ up to similarity.
\end{theo}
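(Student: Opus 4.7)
The plan is to deduce Theorem \ref{theo:generalnonsense} directly from the geometric version (the unnumbered theorem stated immediately above, which is already proved by the preceding material in the appendix). The key translation is the standard identification: setting $V := \D^n$ and viewing matrices as $\D$-linear endomorphisms via left multiplication, one obtains an $\F$-algebra isomorphism $\Mat_n(\D) \simeq \End_\D(V)$. Under this isomorphism, $\F$-linear subspaces with property $\calP$ correspond to $\F$-linear subspaces with property $\calP$, and the notion of optimality, being measured by $\F$-dimension, is preserved. So if $\calM$ is the given optimal subspace of $\Mat_n(\D)$, I would let $\calS \subseteq \End_\D(V)$ be the corresponding subspace.

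Applying the geometric theorem to $\calS$ yields a unique flag $\calF = (V_0, \dots, V_p)$ of $\calS$-invariant $\D$-subspaces and, for each $i \in \lcro 1, p \rcro$, a unique irreducible optimal $\F$-linear subspace $\calT_i \subseteq \End_\D(V_i/V_{i-1})$ with property $\calP$ such that $\calS = \calT_1 \vee \cdots \vee \calT_p$. Setting $n_i := \dim_\D(V_i/V_{i-1})$ produces the required partition $n = n_1 + \cdots + n_p$. Next, I would pick a $\D$-basis $\bfB$ of $V$ adapted to $\calF$, meaning that the first $n_1 + \cdots + n_i$ vectors of $\bfB$ form a $\D$-basis of $V_i$ for every $i$. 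In this basis, every element of $\calS$ is represented by a block upper-triangular matrix whose $i$-th diagonal block represents the induced endomorphism of $V_i/V_{i-1}$; this yields irreducible optimal $\F$-linear subspaces $\calM_i \subseteq \Mat_{n_i}(\D)$ with property $\calP$, and if $P$ is the change-of-basis matrix one gets $P \calM P^{-1} = \calM_1 \vee \cdots \vee \calM_p$.

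For the uniqueness assertion, given any second such decomposition of $\calM$, I would pull it back along the matrix--endomorphism identification: it produces another flag of $\calS$-invariant $\D$-subspaces of $V$ together with irreducible optimal subspaces on the successive quotients. The uniqueness clause in the geometric theorem then forces the partitions to coincide and each pair of quotient subspaces to be related by a $\D$-linear isomorphism, which at the matrix level means that each $\calM_i$ is determined up to similarity and the list $(n_1, \dots, n_p)$ is determined outright.

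The only real obstacle is cosmetic: the conjugator $P$ arises as a change of $\D$-basis and so naturally lies in $\GL_n(\D)$ rather than in $\GL_n(\F)$, so the appropriate notion of ``similarity'' for subspaces of $\Mat_n(\D)$ must be taken to mean conjugation by $\GL_n(\D)$ (the symbol ``$\GL_n(\F)$'' in the statement should evidently be read that way when $\D \neq \F$). With that convention fixed, everything reduces to routine bookkeeping: checking that the matrix-level joint matches the endomorphism-level joint for any basis adapted to the flag, and that property $\calP$, being invariant under similarity by hypothesis, transports faithfully across the identification.
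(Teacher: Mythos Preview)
Your proposal is correct and follows exactly the route the paper intends: the paper presents Theorem~\ref{theo:generalnonsense} simply as the matrix translation of the preceding geometric theorem, with no further argument given, and your write-up supplies precisely that translation. You are also right to flag the $\GL_n(\F)$ versus $\GL_n(\D)$ issue---the change-of-basis matrix naturally lies in $\GL_n(\D)$, and this appears to be a slip in the statement.
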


The bottom line of the above theorem is that the study of optimal spaces with property $\calP$
is entirely reduced to the one of optimal irreducible spaces with property $\calP$.
But there is (small) caveat, which we call the \textbf{composition problem}, and it is probably better understood through the matrix viewpoint. If we have a partition
$n=n_1+\cdots+n_p$ into positive integers,
and for each $i \in \lcro 1,p\rcro$ we have an irreducible optimal $\F$-linear subspace $\calM_i$ of $\Mat_{n_i}(\D)$ with property $\calP$, there is no general way to obtain that the space
$\calM_1 \vee \calM_2 \vee \cdots \vee \calM_p$ is optimal: This appears to depend on the choice of the property $\calP$.
In most situations, it turns out that the composition problem has a positive answer. Here are known properties where the answer is positive:
\begin{itemize}
\item $\calP$ is the nilpotence property (with an arbitrary division ring, see \cite{dSPGerstenhaberskew});
\item $\calP$ is the $\F$-trivial spectrum property with $\D=\F$ (see \cite{dSPlargerank});
\item $\calP$ is the $\F$-triangularizability property with $\D=\F$, where $\F$ is either an infinite perfect field of characteristic $2$ that is not quadratically closed,
or a field with characteristic other than $2$ that is not quadratically closed.
\end{itemize}
For each one of these properties indeed, if we denote by $\delta_n$ the dimension (over $\F$) of the optimal $\F$-linear subspaces of $\Mat_n(\D)$ with property $\calP$,
the identity $\delta_{n+p}=\delta_n+\delta_p+dnp$ is known to hold for all integers $n \geq 1$ and $p \geq 1$.
For the third example, which is dealt with in the present article, this comes from the identity $t_n(\F)=\dbinom{n+1}{2}$ proved in Theorem \ref{theo:dim} for such cases.

\newpage

\section{On the triangularizability of symmetric matrices over fields with characteristic $2$}\label{section:appendixB}

Here, we prove the following result:

\begin{theo}
Let $\F$ be an NRC field of characteristic $2$.
Then:
\begin{enumerate}[(a)]
\item $\Mats_2(\F)$ is weakly triangularizable if and only if every separable polynomial of degree $2$ over $\F$ splits.
\item $\Mats_3(\F)$ is not weakly triangularizable, and consequently $\Mats_n(\F)$ is not weakly triangularizable whatever the integer $n>2$.
\end{enumerate}
\end{theo}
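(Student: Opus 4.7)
The plan is to establish (a) and (b) by explicit matrix constructions combined with direct computations of characteristic polynomials, leveraging the NRC hypothesis only to produce a non-square element $\alpha \in \F$.

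For (a), I would write a generic $M = \begin{bmatrix} a & b \\ b & c \end{bmatrix} \in \Mats_2(\F)$ and compute $\chi_M = t^2 + (a+c)t + (ac + b^2)$ in characteristic $2$. When $a+c=0$ one has $c=a$ and the polynomial collapses to $(t+a+b)^2$, which splits automatically. When $a+c \neq 0$ the polynomial is separable, so the ``if'' direction follows from the hypothesis that every separable quadratic splits. Conversely, given a separable quadratic $t^2 + ut + v$ with $u \neq 0$, the symmetric matrix with $a = b = u^{-1}v$ and $c = u + u^{-1}v$ has this very polynomial as $\chi_M$ (trace $u$ and determinant $v$ are immediate in characteristic $2$), so weak triangularizability of $\Mats_2(\F)$ forces every such polynomial to split.

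For (b), I would use NRC to fix a non-square $\alpha \in \F$ and exhibit the explicit matrix
\[
M = \begin{bmatrix} 1 & 1 & 0 \\ 1 & 1+\alpha & \alpha \\ 0 & \alpha & \alpha \end{bmatrix} \in \Mats_3(\F).
\]
A direct computation (using that $\tr M = 0$, that all three $2 \times 2$ principal minors of $M$ equal $\alpha$, and that $\det M = 0$) gives $\chi_M(t) = t(t^2 + \alpha)$. Since $\alpha$ is not a square in $\F$, the factor $t^2 + \alpha$ is irreducible over $\F$, so $\chi_M$ does not split and $M$ is not triangularizable. The consequence for $n > 3$ is then immediate: the extended matrix $M \oplus 0_{n-3} \in \Mats_n(\F)$ has characteristic polynomial $t^{n-2}(t^2 + \alpha)$, which likewise fails to split.

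The main obstacle is locating the right matrix in part (b). A block-diagonal approach is doomed from the outset: by part (a), no $2 \times 2$ symmetric matrix over $\F$ can realize an inseparable polynomial of the form $t^2 + \alpha$ with $\alpha$ non-square, since that would force $\alpha$ to be a square. More subtly, most simple candidates for a trace-zero, determinant-zero symmetric $3 \times 3$ matrix (for instance those in which two diagonal entries vanish) force the middle coefficient of the characteristic polynomial to be a perfect square, which precludes setting it equal to $\alpha$. The construction above sidesteps these obstructions by keeping all three diagonal entries nonzero and carefully balancing the off-diagonal entries against the $pq + pr + qr$ term so that $\chi_M$ acquires the prescribed non-splitting form.
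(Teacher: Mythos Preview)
Your proof is correct. Part (a) proceeds essentially as in the paper: both arguments dispatch the trace-zero case via $(t+a+b)^2$ and handle the converse by exhibiting a symmetric matrix with prescribed separable characteristic polynomial (the paper phrases this as surjectivity of $x\mapsto x^2+x$ followed by scaling, which amounts to the same construction).

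Part (b) is where you diverge. The paper does not write down an explicit symmetric matrix; instead it starts from the non-symmetric $A=\begin{bmatrix} 0&0&0\\ 0&0&\lambda\\ 0&1&0\end{bmatrix}$ with $\chi_A=t(t^2-\lambda)$, observes that $A$ is selfadjoint for the bilinear form with Gram matrix $S=\begin{bmatrix} 1&0&0\\ 0&0&1\\ 0&1&0\end{bmatrix}$ (because $SA$ is symmetric), and then argues via quadratic form theory that $S$ is congruent to $I_3$, so that $A$ is similar to a symmetric matrix. The explicit form of that matrix is left as an exercise. Your direct construction is more elementary and self-contained, and in fact supplies the answer to the paper's exercise; the paper's route, on the other hand, makes transparent \emph{why} such a matrix must exist (it is a statement about equivalence of nondegenerate symmetric bilinear forms in characteristic $2$) and would generalize more readily.
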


\begin{proof}
Note that every trace zero matrix of $\Mats_2(\F)$ has its characteristic polynomial of the form $t^2-a^2+b^2=(t-a+b)^2$ for some $(a,b)\in \F^2$, and hence
is triangularizable over $\F$. Hence, if $\Mats_2(\F)$ is not weakly triangularizable then it contains a matrix $M \in \Mats_2(\F)$
such that $\tr(M)\neq 0$ and $\chi_M$ is irreducible, and $\chi_M$ is separable with degree $2$.

Conversely, assume that $\Mats_2(\F)$ is weakly triangularizable. To start with, we prove that $x \in \F \mapsto x^2+x \in \F$
is surjective. Let $y \in \F$. Since $\begin{bmatrix}
1 & y \\
y & 0
\end{bmatrix}$ is symmetric, it is triangularizable; its characteristic polynomial equals $t^2+t+y^2$, and hence there exists $x \in \F$ such that $x^2+x=y^2$.
Then $(y+x)^2+(y+x)=y^2+x^2+x+y=y$.
Hence the claimed surjectivity. Finally, let $\alpha \in \F \setminus \{0\}$ and $y \in \F$.
Then we can find $x \in \F$ such that $x^2+x=y/\alpha^2$, to the effect that $(\alpha x)^2+\alpha (\alpha x)=y$.
Hence, every polynomial with degree $2$ and nonzero trace over $\F$ has a root in $\F$. This completes the proof of point (a).

To prove point (b), we use the fact that $\F$ is NRC, taking a scalar $\lambda \in \F$ for which $t^2-\lambda$ has no root in $\F$.
We consider the matrix $A:=\begin{bmatrix}
0 & 0 & 0 \\
0 & 0 & \lambda \\
0 & 1 & 0
\end{bmatrix}$, which has characteristic polynomial $t(t^2-\lambda)$ and is therefore non-triangularizable.
We shall prove that $A$ is similar to a symmetric matrix. To see this, we observe that
$A$ represents a selfadjoint map for the inner product $b : (X,Y) \mapsto X^T S Y$ on $\F^3$, where
$S:=\begin{bmatrix}
1 & 0 & 0 \\
0 & 0 & 1 \\
0 & 1 & 0
\end{bmatrix}$, which amounts to observing that $SA=\Diag(0,1,\lambda)$ is symmetric.
To conclude, it suffices to observe that the symmetric bilinear form $b$ is represented in some basis by $I_3$.
Since $S$ is not alternating yet invertible, $b$ has an orthogonal basis $(e_1,e_2,e_3)$ made of nonisotropic vectors.
Besides, as the diagonal entries of $S$ are squares, the values of the associated quadratic form $Q : X \mapsto X^T S X$ are squares in $\F$. Hence by scaling we can find a $b$-orthogonal basis $(e_1,e_2,e_3)$ of $\F^3$ in which $Q(e_1)=Q(e_2)=Q(e_3)=1$,
which yields the claimed statement. Then, the matrix representation of $X \mapsto AX$ in that basis is symmetric, and we have found a non-triangularizable matrix of 
$\Mats_3(\F)$. We leave the computation of an explicit such matrix as an exercise to the reader.
\end{proof}

An example of an NRC field with characteristic $2$ in which every $2$-by-$2$ symmetric matrix is triangularizable is the
separable closure $\K$ of $\F_2(t)$ (the field of fractions with one indeterminate over $\F_2$) in an algebraic closure $\overline{\F_2(t)}$.
Then every separable polynomial with coefficients in $\K$ splits over $\K$, but by the general theory of separable extensions 
it is known that $t$ has no more square root in $\K$ than it has in $\F_2(t)$ itself, hence $\K$ is NRC.

\section*{Conflict of interest statement}

The author states that there is no conflict of interest.

\section*{Data availability statement}

There is no relevant data corresponding to this manuscript.

\end{document}